\documentclass[10pt, a4paper, reqno, oneside]{amsart}


\usepackage{amsmath, amsopn, amsfonts, amsthm, amssymb, amscd, enumerate, multicol, scalefnt, mathtools}

\makeatletter
\@namedef{subjclassname@2020}{\textup{2020} Mathematics Subject Classification}
\makeatother

\usepackage{sansmath}
\usepackage{etex}
\usepackage{relsize}
\usepackage{hyperref}
\usepackage{t1enc}
\usepackage{relsize}
\usepackage[latin1]{inputenc}
\usepackage{graphicx}
\usepackage[all]{xy}
\usepackage{color}
\usepackage{slashed}
\usepackage[mathscr]{euscript}
\usepackage{enumitem}
\setlist[itemize]{noitemsep, topsep=1pt, leftmargin=20pt}
\tolerance=10000
\sloppy

\setcounter{tocdepth}{1}

\newcommand\bcdot{\ensuremath{
  \mathchoice
   {\mskip\thinmuskip\lower0.2ex\hbox{\scalebox{1.6}{$\cdot$}}\mskip\thinmuskip}}
   {\mskip\thinmuskip\lower0.2ex\hbox{\scalebox{1.6}{$\cdot$}}\mskip\thinmuskip}
   {\lower0.3ex\hbox{\scalebox{1.2}{$\cdot$}}}
   {\lower0.3ex\hbox{\scalebox{1.2}{$\cdot$}}}
}
\theoremstyle{plain}
\newtheorem{theo}{Theorem}[section]

\newtheorem{prop}[theo]{Proposition}

\theoremstyle{definition}
\newtheorem{rem}[theo]{Remark}

\newtheorem{definition}[theo]{Definition}

\theoremstyle{plain}
\newtheorem{lemma}[theo]{Lemma}
\newtheorem{theorem}[theo]{Theorem}

\theoremstyle{definition}

\theoremstyle{plain}

\newcounter{ssig}
\setcounter{ssig}{0}

\newcounter{ttig}
\setcounter{ttig}{0}

\setlength{\textwidth}{16.5cm}
\setlength{\textheight}{22.5cm}
      \hoffset -2cm


\title[]{Sobolev regularity for nonlinear Poisson equations with Neumann boundary conditions on Riemannian manifolds}

\author{Alessandro Goffi}
\address[Alessandro Goffi]{Dipartimento di Matematica ``Tullio Levi-Civita'', Universit\`a di Padova, Via Trieste 63, 35121 Padova, Italy}
\email{alessandro.goffi@unipd.it}

\author{Francesco Pediconi}
\address[Francesco Pediconi]{Department of Mathematics, Aarhus University, Ny Munkegade 118, 8000 Aarhus C, Denmark}
\email{francesco.pediconi@math.au.dk}

\subjclass[2020]{35J61, 35B65, 58J05, 58J60, 35F21, 35Q89}
\keywords{Bernstein method, B\"ochner's identity, Hamilton-Jacobi equations, maximal $L^q$-regularity, Mean Field Games, Riemannian manifolds.}

\thanks{The first-named author is member of GNAMPA of INdAM and has been partially supported by the GNAMPA project ``Mean Field Games: modelli e sviluppi''. The second-named author is member of GNSAGA of INdAM and has been supported by the project PRIN 2017 ``Real and Complex Manifolds: Topology, Geometry and holomorphic dynamics'' (code 2017JZ2SW5).}

\begin{document}
\begin{abstract}
In this paper, we study the Sobolev regularity of solutions to nonlinear second order elliptic equations with super-linear first-order terms on Riemannian manifolds, complemented with Neumann boundary conditions, when the source term of the equation belongs to a Lebesgue space, under various integrability regimes. Our method is based on an integral refinement of the Bochner's identity, and leads to ``semilinear Calder\'on-Zygmund'' type results. Applications to the problem of smoothness of solutions to Mean Field Games systems with Neumann boundary conditions posed on convex domains of the Euclidean space will also be discussed.
\end{abstract}

\maketitle


\section{Introduction}
\setcounter{equation} 0

The purpose of this paper is to establish Sobolev a priori estimates for solutions to the Neumann problem for semi-linear elliptic equations on a Riemannian manifold $(M,g)$ whose prototype is
\begin{equation} \label{eqintro}
\begin{cases}
-\Delta u(x)+H(x,{\rm d}u(x))=f(x) & \text{ in } \Omega \subset M \\
\partial_\nu u(x) =0 & \text{ on } \partial\Omega
\end{cases}
\end{equation}
where $\Delta$ is the Laplace-Beltrami operator of $(M,g)$, $H: T^*M \to \mathbb{R}$ is a nonlinear function which has superlinear growth $\gamma>1$ in the second entry, $\Omega \subset M$ is a domain satisfying suitable geometric conditions (see Remark \ref{classO}), $\partial_\nu$ is the normal derivative and the source term of the equation $f$ is unbounded and belongs to some Lebesgue space $L^q(\Omega,\mathbb{R})$. Such PDEs with superlinear first-order terms appear naturally in several fields, such as Stochastic Control Theory \cite{BF} (where \eqref{eqintro} builds upon a stochastic differential equation with controlled drift and reflection at the boundary), Differential Geometry \cite{EellsSampson} and in the recent literature of Mean Field Games \cite{LL07}.

In particular, we will focus on two type of gradient estimates for solutions of such equations. First, we ask which integrability on the source term $f$ is necessary to achieve $L^p$-gradient bounds. Second, we study an optimal $L^q$-regularity estimate, and ask whether for some integrability range $q>\bar q>1$, a control of $f \in L^{q}(\Omega,\mathbb{R})$ implies bounds on the same Lebesgue space for the terms on the left-hand side of \eqref{eqintro}, meaning that there is no loss of regularity in the equation.

Since we suppose that solutions to our problem are smooth (or at least strong, see Remark \ref{remClass-Str}), our results are closer to the domain of a priori estimates rather than to regularity ones, as it happens in Calder\'on-Zygmund regularity theory (see  \cite[Chapter 9]{GT}).\smallskip

Our starting point is the so-called Bochner Identity, which states if $(M,g)$ is a $d$-dimensional Riemannian manifold and $u: M \to \mathbb{R}$ is of class $\mathcal{C}^3$, one has
$$
\Delta\big(\tfrac12|\nabla u|^2\big) = g(\nabla(\Delta u),\nabla u) + |D^2u|^2 + \mathrm{Ric}(\nabla u,\nabla u) \,\, ,
$$
where $D$ denotes the Levi-Civita connection, $\nabla$ the Riemannian gradient and $\mathrm{Ric}$ the Ricci curvature of $(M,g)$. Then, following the notation from Gamma-calculus \cite{BGL}, if we consider the symmetric bilinear forms on $\mathcal{C}^{\infty}(M,\mathbb{R})$ defined by
$$
\Gamma(u,v) \coloneqq g(\nabla u, \nabla v) \,\, , \quad
\Gamma_2(u,v) \coloneqq \tfrac12 \big(\Delta(\Gamma(u,v))-\Gamma(u,\Delta v)-\Gamma(v,\Delta u)\big) \,\, ,
$$
together with the associated quadratic forms $\Gamma(u) \coloneqq |\nabla u|^2$ and $\Gamma_2(u) \coloneqq \Gamma_2(u,u)$, it is immediate to realize that the Bochner Identity implies that
$$
\Gamma_2(u) = |D^2u|^2 +\mathrm{Ric}(\nabla u,\nabla u) \,\, .
$$
Therefore, the Cauchy-Schwarz Inequality and a uniform lower bound on the Ricci curvature $\mathrm{Ric} \geq -\kappa g$ give the so-called {\it curvature-dimension inequality} (see \cite{BGL})
\begin{equation}\label{cdineq}
\Gamma_2(u) \geq \tfrac1d (\Delta u)^2 -\kappa\,\Gamma(u) \,\, .
\end{equation}
This simple and deep algebraic inequality plays a pivotal role in deriving quantitative results such as gradient estimates, which in the PDE literature are part of the so-called {\it Bernstein gradient estimates}, see e.g. \cite{ll, Lions85, VeronBook, VeronJFA, CGell}. As it is well-known, these bounds are the lynchpin to derive many other qualitative and quantitative properties of solutions to nonlinear PDEs, such as Harnack inequalities and Liouville theorems \cite{SerrinPeletier,ColdingMinicozzi}, and even compactness theorems for linear and nonlinear PDEs. Actually, even beyond the application to PDEs, the curvature-dimension inequality, combined with the properties of the heat semigroup, is the cornerstone to connect the geometry of the manifold with some of its global properties, see \cite{GarofaloFrac,BaudoinGarofalo} and the references therein. \smallskip

In this paper, starting from the above observations, due to the presence of data belonging to Lebesgue spaces, the application of the pointwise Bernstein argument via the maximum principle (see e.g. \cite{PorLeo}) is ruled out, and one needs to develop an integral method. To this aim, with a slight abuse of terminology, we develop a kind of nonlinear integral curvature-dimension inequalities or, alternatively, integral Bernstein arguments, to study some regularity aspects in the scale of Sobolev spaces for nonlinear equations of the form \eqref{eqintro}. To do this, we borrow several tools from \cite{Lions85,ll,BardiPerthame} and \cite{CGell}, where similar regularity properties have been analyzed through the so-called integral Bernstein method and its refinements.

Our first main estimate, that is the content of Theorem \ref{main1}, gives the following. When $f \in L^q(\Omega,\mathbb{R})$ for some $q>d = \dim M$ and
$$
H(x,{\rm d}u(x))=\tfrac1\gamma|\nabla u|^\gamma+g(B,\nabla u) \,\, ,
$$
with $\gamma>1$ and $B \in L^s(\Omega,TM|_{\Omega})$ for some $s>d$, we prove that $\nabla u \in L^r(\Omega,TM|_{\Omega})$ for any $1 \leq r < \infty$. This step is achieved by first testing \eqref{cdineq} by suitable powers of $\Gamma(u)=|\nabla u|^2$ and then plugging the equation \eqref{eqintro} in \eqref{cdineq} to gain additional coercivity, that is
$$
|D^2u|^2 \geq \tfrac1d(\Delta u)^2 = \tfrac1d(H-f)^2\sim c_1|\nabla u|^{2\gamma}-c_2f^2\ .
$$
This step is crucial in many results in the PDE literature, both from a quantitative perspective, see \cite{Lions85,ll,CirantJMPA,PorLeo} among others, as well as for qualitative aspects like Liouville theorems, cf \cite{SerrinPeletier,Lions85,VeronBook}. 
The boundary terms are handled assuming a condition on the second fundamental form of the boundary $\partial \Omega$, which in the Euclidean case reduces to the convexity, that leads to the ``good sign'' for the exterior derivative term $\partial_\nu|\nabla u|^2$, see \cite{Lio80,CirantJMPA,PorrCCM} for similar assumptions and contexts. Then, a combination of H\"older, Young and Sobolev inequalities allows to get the conclusion through classical absorption schemes typical of regularity analyses for linear and nonlinear PDEs, see Section \ref{sect_proof1} for the details of the estimates. The first result dates back to \cite{Lions85} when $(M,g) = (\mathbb{R}^d,\langle\,,\rangle)$ equipped with Dirichlet boundary conditions, see also \cite{ll} for local estimates when $f\in L^\infty$, both of them when $B=0$. The presence of a drift term has been addressed in \cite{BardiPerthame} for elliptic equations with leading operator in non-divergence form with Sobolev diffusion matrix and first-order terms with natural growth. Finally, when $B=0$ and $(M,g) = (\mathbb{R}^d,\langle\,,\rangle)$ with $\Omega \subset \mathbb{R}^d$ convex, or $B=0$ and $(M,g) = (\mathbb{T}^d = \mathbb{R}^d/\mathbb{Z}^d, \langle\,,\rangle)$ with $\Omega = \mathbb{T}^d$, the results can be found respectively in \cite{CirantJMPA,PV}. \smallskip

Our second main estimate, that is the content of Theorem \ref{main2}, is much more delicate and exploit a further generalized version of \eqref{cdineq}, following the lines of \cite{CGell}. In particular, we prove that for a suitable $\mathcal{C}^2$-function $h=h(t)$ of one real variable, with $h'(t) \geq 0$ and $h''(t) \leq 0$ for any $t \geq 0$, we have
\begin{equation*}
\Delta \big(h\big(\tfrac12|\nabla u|^2\big)\big) = h'\big(\tfrac12|\nabla u|^2\big)\big( |D^2u|^2+ \Gamma(u,\Delta u) + \mathrm{Ric}(\nabla u,\nabla u)\big)+h''\big(\tfrac12|\nabla u|^2\big)|D^2u(\nabla u)|^2 \,\, , 
\end{equation*}
which leads, through the Cauchy-Schwarz inequality, to a weighted version of \eqref{cdineq}.
As a further additional step in the program, the integral version of the method requires to work on suitable restricted domains (i.e. on super-level sets of the gradient). Differently from Theorem \ref{main1}, the classical absorption scheme leads to an ``unbalanced'' inequality of the form
\[
y_k^{\frac{d-2}d} \leq y_k+\zeta(\mathrm{vol}(\{ |\nabla u| \geq k\})) \,\, , \quad y_k \sim \int_{\{|\nabla u|\geq k\}}(|\nabla u|-k)^{\gamma q}\,\omega_{\mathrm{vol}} \,\, ,
\]
for all $k \geq 1$, where $\zeta=\zeta(t)$ is a continuous function such that $\zeta(t) \to 0^+$ as $t \to 0^+$. Since ${\frac{d-2}d} <1$, one cannot apply the Young inequality to obtain the desired bound $|\nabla u|^\gamma \in L^q(\Omega,\mathbb{R})$. Nonetheless, one can get an estimate through a continuity argument introduced in \cite{CGell}, see also \cite{GMP14} for similar bounds in energy spaces of suitable powers of $|u|$. This allows to reach an estimate below the threshold $q=d$ and prove that
$$
f \in L^q(\Omega,\mathbb{R}) \,\, \text{ for some } q > \max\big\{d\tfrac{\gamma-1}{\gamma},2\big\} \quad \Longrightarrow \quad |\nabla u|^\gamma \in L^q(\Omega,\mathbb{R}) 
$$
and get even higher regularity for the second derivatives from the equation itself through Calder\'on-Zygmund theory, cf \cite{GT,PigolaSurvey,GunPig}. We remark that such an estimate fits within the so-called {\it maximal $L^q$-regularity estimates}, and has been proposed in a series of seminars by P.-L. Lions and recently addressed for problems posed on the flat torus, typical of ergodic control theory, in \cite{CGell} under the same integrability conditions. Here, as suggested by P.-L. Lions himself in his conferences, we continue the analysis initiated in \cite{CGell} and drop the periodicity condition, working with problems posed on Riemannian manifolds equipped with Neumann boundary conditions, providing a further technical advance in the field.

We emphasize that the proofs of Theorems \ref{main1} and \ref{main2} adapt, with minor modifications, to PDEs posed on compact Riemannian manifolds without boundary, for which the results to our knowledge are new (see Remark \ref{senzabdr}). A different approach to tackle interior bounds to such problems has been recently implemented in \cite{CV} through a blow-up method in the superquadratic regime. Let us finally remark that our methods do not allow to treat the parabolic version of \eqref{eqintro}. In this direction, rather different methods based on duality techniques recently appeared in \cite{CG2,CGpar}. \smallskip

As a byproduct of Theorem \ref{main2}, we show that such a level of regularity allows to improve the range of the smoothness of solutions to the following system of PDEs arising in the theory of Mean Field Games introduced by J.-M. Lasry and P.-L. Lions \cite{LL07}, posed on a convex domain $\Omega \subset \mathbb{R}^d$ of the Euclidean space, which describes the configuration at equilibrium of differential games with infinitely many indistinguishable rational agents:
\begin{equation*} \label{mfgintro}
\begin{cases}
-\Delta u(x) +H(x,{\rm d}u(x))+\lambda = m^\alpha & x \in \Omega \,\, , \\
-\Delta m(x) -\mathrm{div}\big(\partial_pH(x,{\rm d}u(x))m(x)\big)=0 & x \in \Omega \,\, , \\
\partial_\nu u (x) =0 \,\, , \quad \partial_\nu m(x) +\big(\partial_pH(x,{\rm d}u(x)) \cdot \nu\big)\,m(x) =0 & x \in \partial \Omega \,\, , \\
\int_\Omega m(x)\,{\rm d}x=1\ , \quad m(x)>0 & x \in \overline{\Omega} \,\, .
\end{cases}
\end{equation*}
In the above system, $H$ is the so-called Hamiltonian (while $\partial_pH(x,p)$ denotes the partial derivative of $H$ with respect to the second entry) and the variables $m: \Omega \to (0,+\infty)$, $u: \Omega \to \mathbb{R}$ and $\lambda \in \mathbb{R}$ describe respectively the equilibrium configuration of the agents, the equilibrium cost of a prototype player and the ergodic constant.
In our analysis, it is composed by a Hamilton-Jacobi equation with a Hamiltonian term having superlinear growth in the gradient entry, i.e. $H(x,{\rm d}u(x)) \sim |\nabla u|^{\gamma}$ for some $\gamma>1$, and a stationary Fokker-Planck equation driven by the (optimal) vector field $\partial_pH(x,{\rm d}u(x))\sim |\nabla u|^{\gamma-1} $, which heuristically describes the optimal strategies of the average player. In particular, we show in Theorem \ref{mainappl} that for such a model problem, there exists a classical solution $(u,\lambda,m)$ provided the exponent $\alpha$ satisfies
$$
\alpha < \tfrac{\gamma'}{d-2-\gamma'} \,\, , \quad \text{ where } \gamma' = \tfrac{\gamma}{\gamma-1} \,\, .
$$
This improves upon the known results in the literature for problems with Neumann boundary conditions, cf \cite{CirantJMPA,Alpar}, and also the better range found in the periodic setting \cite{CCPDE}, see Remark \ref{rem:exp}, and, finally, is flexible enough to cover problems with multi-populations.

\medskip
\noindent \textit{Plan of the paper}. In Section \ref{sect_2}, we collect some preliminaries of Differential Geometry and Geometric Analysis. Section \ref{sect_3} contains the statements and the proofs of our main results, that are Theorem \ref{main1} and Theorem \ref{main2}. Finally, in Section \ref{sect_4}, we provide the above mentioned application of our estimates, that is Theorem \ref{mainappl}.

\medskip
\noindent \textit{Acknowledgments}. This work has been written while the second-named author was a postdoctoral fellow at the Dipartimento di Matematica e Informatica ``Ulisse Dini'', Universit\`a di Firenze. He is grateful to the
department for the hospitality.

\medskip
\section{Preliminaries of Geometric Analysis} \label{sect_2}
\setcounter{equation} 0

\subsection{Riemannian geometry} \hfill \par

Let $(M^d,g)$ be a smooth Riemannian manifold of dimension $d \geq 3$. From now on, we will always assume that $(M,g)$ is connected, orientable and not necessarily complete.

We denote by $\mathrm{d}$ the exterior derivative, by $D$ the Levi-Civita covariant derivative and by $\omega_{\mathrm{vol}}$ the Rieman\-nian volume form. Furthermore, we denote by $\mathrm{Rm}(X,Y) \coloneqq D_{[X,Y]}-[D_X,D_Y]$ the Riemannian curvature tensor and by $\mathrm{Ric}(X,Y) \coloneqq \mathrm{Tr}\big(Z \mapsto \mathrm{Rm}(X,Z)Y\big)$ the Ricci tensor. Notice that the Riemannian volume form induces a measure ${\rm vol}$ on $M$ by setting $${\rm vol}(E) \coloneqq \textstyle\int_E \omega_{\mathrm{vol}} \quad \text{ for any Borel subset $E \subset M$ } \,\, .$$ We also denote by $|\cdot|$ the norm induced by $g$ on the tensor bundle over $M$. For example, given a field of endomorphisms $A \in \mathcal{C}^{0}(M,{\rm End}(TM))$, we have $$|A|: M \to \mathbb{R} \,\, , \quad |A|_x \coloneqq \Big(\sum_{i,j=1}^d g_x(A_x(e_i),e_j)^2\Big)^{\frac12} \,\, ,$$ where $(e_i)$ is any orthonormal frame for the tangent space $T_xM$. \smallskip

Given a sufficiently regular function $u: M \to \mathbb{R}$, we denote by $\nabla u$ the {\it gradient of $u$} defined by $$g(\nabla u,X) \coloneqq \mathrm{d}u (X) \quad \text{ for any } X \in \mathcal{C}^{\infty}(M,TM)$$ and by $D^2 u$ the {\it Hessian of $u$} defined by $D^2 u(X) \coloneqq D_X(\nabla u)$. We also denote by
$$
\Delta u \coloneqq \mathrm{Tr}\big(Z \mapsto D^2u(Z)\big) = {\rm div}(\nabla u)
$$
the {\it Laplacian of $u$}. Here, $\mathrm{div}(X) \coloneqq \mathrm{Tr}\big(Z \mapsto D_ZX\big)$ denotes the {\it divergence} of the vector field $X$. We also recall the following well-known

\begin{prop}[Bochner Formula]
Let $u \in \mathcal{C}^3(M,\mathbb{R})$ be a function and set $w \coloneqq \tfrac12|\nabla u |^2$. Then, it holds that
\begin{equation} \label{bw1}
\Delta w = g(\nabla(\Delta u),\nabla u) + |D^2u|^2 + \mathrm{Ric}(\nabla u,\nabla u) \,\, .
\end{equation}
\end{prop}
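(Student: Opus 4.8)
The asserted identity is pointwise, so the plan is to fix an arbitrary $p\in M$ and a local orthonormal frame $(e_i)_{i=1}^{d}$ which is \emph{geodesic at $p$}, i.e.\ $(D_{e_i}e_j)(p)=0$ for all $i,j$ (so also $[e_i,e_j](p)=0$). Every computation below is to be read at the point $p$; since $p$ is arbitrary, this establishes \eqref{bw1}. As a first remark, I would record that $\nabla w=D_{\nabla u}\nabla u$ on all of $M$: for any vector field $X$,
\[
X(w)=\tfrac12\,X\!\big(g(\nabla u,\nabla u)\big)=g(D_X\nabla u,\nabla u)=D^2u(X,\nabla u)=D^2u(\nabla u,X)=g(D_{\nabla u}\nabla u,X),
\]
using the metric compatibility of $D$ and the symmetry of the Hessian (valid because $D$ is torsion-free).

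Next I would expand $\Delta w=\mathrm{div}(\nabla w)$ in the frame. Since $(D_{e_i}e_i)(p)=0$ one has $\Delta w=\sum_i g(D_{e_i}\nabla w,e_i)=\sum_i e_i\big(e_i(w)\big)$ at $p$, and writing $e_i(w)=g(D_{e_i}\nabla u,\nabla u)$ and differentiating once more (metric compatibility),
\[
\Delta w=\sum_i\Big(g\big(D_{e_i}D_{e_i}\nabla u,\nabla u\big)+g\big(D_{e_i}\nabla u,D_{e_i}\nabla u\big)\Big)=g\Big(\textstyle\sum_i D_{e_i}D_{e_i}\nabla u,\ \nabla u\Big)+|D^2u|^2,
\]
where the last equality uses $\sum_i|D_{e_i}\nabla u|^2=\sum_{i,j}g(D_{e_i}\nabla u,e_j)^2=|D^2u|^2$, by the definition of the tensor norm recalled above (note that the $\mathcal{C}^3$ hypothesis is exactly what makes these third-order manipulations legitimate).

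It remains to identify the rough Laplacian $\sum_i D_{e_i}D_{e_i}\nabla u$ at $p$, which is the heart of the argument. I would compute its $e_j$-component, $\sum_i g(D_{e_i}D_{e_i}\nabla u,e_j)$, by passing to iterated covariant derivatives of $u$: the symmetry of the Hessian lets one rearrange this expression so that the two differentiations can be commuted, and the commutator of covariant derivatives is, by definition of curvature, a contraction against $\mathrm{Rm}$; summing over the frame turns that contraction into a Ricci term, while the uncommuted part reassembles into $e_j(\Delta u)$ because $\Delta u=\mathrm{Tr}(D^2u)$ and the trace commutes with $D$. With the conventions fixed in the excerpt, $\mathrm{Rm}(X,Y)=D_{[X,Y]}-[D_X,D_Y]$ and $\mathrm{Ric}(X,Y)=\mathrm{Tr}\big(Z\mapsto\mathrm{Rm}(X,Z)Y\big)$, this yields
\[
\textstyle\sum_i D_{e_i}D_{e_i}\nabla u=\nabla(\Delta u)+\mathrm{Ric}^{\sharp}(\nabla u)\qquad\text{at }p,
\]
$\mathrm{Ric}^{\sharp}$ being the Ricci tensor viewed as a field of symmetric endomorphisms. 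Pairing with $\nabla u$ and substituting into the previous display gives $\Delta w=g(\nabla(\Delta u),\nabla u)+\mathrm{Ric}(\nabla u,\nabla u)+|D^2u|^2$, which is \eqref{bw1}.

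I expect the only genuinely delicate point to be the sign bookkeeping in the commutation step: one must verify that the curvature correction contributes $+\mathrm{Ric}(\nabla u,\nabla u)$ and not its negative, and this is purely a matter of consistently tracking the two conventions above for $\mathrm{Rm}$ and $\mathrm{Ric}$ (they are chosen precisely so that the final sign is the standard one, under which the round sphere has positive Ricci). Everything else is a routine consequence of the normal-frame reduction, metric compatibility, and the symmetry of $D^2u$.
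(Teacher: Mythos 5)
Your argument is correct and is essentially the paper's proof: both fix a frame that is geodesic/orthonormal at a point, record $\nabla w = D^2u(\nabla u)$, and obtain the three terms by commuting two covariant derivatives so that the curvature contracts to $\mathrm{Ric}(\nabla u,\nabla u)$. The only cosmetic difference is the bookkeeping — you apply Leibniz first and then the Ricci identity for the rough Laplacian $\sum_i D_{e_i}D_{e_i}\nabla u = \nabla(\Delta u) + \mathrm{Ric}^{\sharp}(\nabla u)$, whereas the paper commutes $D_{e_i}$ past $D_{\nabla u}$ in $\sum_i g(D_{e_i}D_{\nabla u}\nabla u, e_i)$ and extracts $|D^2u|^2$ from the $D_{[e_i,\nabla u]}$ term — and with the conventions $\mathrm{Rm}(X,Y)=D_{[X,Y]}-[D_X,D_Y]$, $\mathrm{Ric}(X,Y)=\mathrm{Tr}(Z\mapsto\mathrm{Rm}(X,Z)Y)$ fixed in the paper, the curvature correction does carry the $+$ sign you claim.
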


\begin{proof}
By definition we get $\mathrm{d} w(X) = g(D_X(\nabla u), \nabla u)$ for any smooth vector field $X$, hence
\begin{equation} \label{gradw}
\nabla w = D^2u(\nabla u) \,\, .
\end{equation}
Therefore, if we fix a point $x \in M$ and a local frame $(e_i)$ in a neighborhood of $x$ such that $(e_i)_x$ is an orthonormal basis for $T_xM$ and $(D e_i)_x=0$, from \eqref{gradw} we get
\begin{align*}
\Delta w &= \sum_{i=1}^n g(D_{e_i}D_{\nabla u} \nabla u , e_i) \\
&= \sum_{i=1}^n g(D_{\nabla u}D_{e_i} \nabla u , e_i) +\sum_{i=1}^ng(D_{[e_i,\nabla u]} \nabla u , e_i) -\sum_{i=1}^ng(\mathrm{Rm}(e_i,\nabla u)\nabla u,e_i) \\
&= \sum_{i=1}^n \mathcal{L}_{\nabla u}(g(D_{e_i} \nabla u, e_i)) +\sum_{i,j=1}^n g(D_{e_j} \nabla u, e_i)^2 +\sum_{i=1}^ng(\mathrm{Rm}(\nabla u,e_i)\nabla u,e_i) \\
&= g(\nabla(\Delta u),\nabla u) + |D^2u|^2 + \mathrm{Ric}(\nabla u,\nabla u)
\end{align*}
where $\mathcal{L}$ denotes the Lie derivative. This completes the proof.
\end{proof}

Notice that the Bochner Formula implies the following

\begin{lemma}\label{lemboc}
Let $u \in \mathcal{C}^3(M,\mathbb{R})$, $w \coloneqq \tfrac12|\nabla u |^2$, $h: [0,+\infty) \to \mathbb{R}$ of class $\mathcal{C}^2$ and set
$$z \coloneqq h(w) \,\, , \quad z^{(1)} \coloneqq h'(w) \,\, , \quad z^{(2)} \coloneqq h''(w) \,\, .$$
Then
\begin{equation} \label{bw2}
\Delta z = z^{(1)}\big(g(\nabla(\Delta u),\nabla u) + |D^2u|^2 + \mathrm{Ric}(\nabla u,\nabla u)\big)+z^{(2)}|D^2u(\nabla u)|^2 \,\, . 
\end{equation}
\end{lemma}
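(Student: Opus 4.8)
The plan is to derive \eqref{bw2} from the Bochner Formula \eqref{bw1} by the chain rule for the Laplace--Beltrami operator, so that no new geometric input is needed beyond what is already recorded above.

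First I would establish the first-order chain rule. Since $z = h(w)$ and $z^{(1)} = h'(w)$, the defining property $g(\nabla\varphi,X) = \mathrm{d}\varphi(X)$ of the gradient, together with $\mathrm{d}(h(w)) = h'(w)\,\mathrm{d}w$ and $\mathrm{d}(h'(w)) = h''(w)\,\mathrm{d}w$, gives
$$\nabla z = z^{(1)}\,\nabla w \,\, , \qquad \nabla z^{(1)} = z^{(2)}\,\nabla w \,\, .$$
Next, applying the divergence to the first identity and using the Leibniz rule $\mathrm{div}(\varphi X) = \varphi\,\mathrm{div}(X) + g(\nabla\varphi,X)$, valid on any Riemannian manifold, together with the second identity, yields
$$\Delta z = \mathrm{div}\big(z^{(1)}\,\nabla w\big) = z^{(1)}\,\Delta w + g\big(\nabla z^{(1)},\nabla w\big) = z^{(1)}\,\Delta w + z^{(2)}\,|\nabla w|^2 \,\, .$$

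Finally I would substitute the two facts already available about $w$: the identity \eqref{gradw}, namely $\nabla w = D^2u(\nabla u)$, which gives $|\nabla w|^2 = |D^2u(\nabla u)|^2$; and the Bochner Formula \eqref{bw1} for $\Delta w$. Plugging both into the previous display produces exactly \eqref{bw2}.

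There is essentially no serious obstacle here: the argument is a two-line computation. The only point deserving (routine) care is the regularity bookkeeping needed for the pointwise identities to make classical sense --- since $u \in \mathcal{C}^3$ one has $\nabla u \in \mathcal{C}^2$, hence $w \in \mathcal{C}^2$, and as $h \in \mathcal{C}^2$ the composition $z = h(w)$ is $\mathcal{C}^2$, which is exactly the regularity under which $\Delta z$ and the divergence manipulation above are legitimate.
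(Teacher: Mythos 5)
Your proposal is correct and follows essentially the same route as the paper: the chain rule $\Delta z = z^{(1)}\Delta w + z^{(2)}|\nabla w|^2$ followed by substitution of \eqref{bw1} and \eqref{gradw}. The only difference is that you derive the chain rule explicitly via the Leibniz rule for the divergence, which the paper states without derivation.
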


\begin{proof} Observe that
$$
\Delta z = \Delta \big(h \circ w \big) = (h' {\circ} w)\, \Delta w + (h'' {\circ} w)\, |\nabla w|^2 \,\, .
$$
Therefore, by \eqref{bw1} and \eqref{gradw} we obtain \eqref{bw2}. \end{proof}

Let now $\Omega \subset M$ be a {\it domain}, i.e. a non-empty, connected, open subset, and assume that the boundary $\partial \Omega$ is an orientable, embedded hypersurface of class $\mathcal{C}^{\infty}$. This implies that there exists a non-vanishing, unitary, normal, outward-pointing vector field $\nu \in \mathcal{C}^{\infty}(\partial \Omega,TM|_{\partial \Omega})$ on the whole $\partial \Omega$. Then, from the Stokes' Theorem we get the following

\begin{prop}[Integration by parts Formula]
Let us consider a function $u \in \mathcal{C}^1(\Omega,\mathbb{R}) \cap \mathcal{C}^0(\overline{\Omega},\mathbb{R})$ and a vector field $X \in \mathcal{C}^1(\Omega,TM|_{\Omega}) \cap \mathcal{C}^0(\overline{\Omega},TM|_{\overline{\Omega}})$. Then
\begin{equation} \label{intpart}
\int_{\Omega} g(\nabla u,X) \, \omega_{\mathrm{vol}} + \int_{\Omega} u \, \mathrm{div}(X) \, \omega_{\mathrm{vol}} = \int_{\partial \Omega} u\,g(X, \nu) \, \imath_{\partial\Omega}{}^*(\nu \lrcorner \omega_{\mathrm{vol}}) \,\, ,
\end{equation}
where $\imath_{\partial\Omega}$ denotes the canonical embedding of $\partial\Omega$ into $M$ and $\imath_{\partial\Omega}{}^*(\nu \!\lrcorner \omega_{\mathrm{vol}}) = \omega_{\mathrm{vol}}\big(\nu, \mathrm{d}\,\imath_{\partial\Omega}({\cdot}),\dots,\mathrm{d}\,\imath_{\partial\Omega}({\cdot})\big)$ is the induced Riemannian volume form on the boundary $\partial\Omega$. 
\end{prop}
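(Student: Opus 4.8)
The plan is to apply Stokes' Theorem to the $(d-1)$-form $\eta \coloneqq (uX)\lrcorner\,\omega_{\mathrm{vol}}$ on $\Omega$ and then to recognize the two sides of \eqref{intpart} via three elementary identities. First, since $\omega_{\mathrm{vol}}$ has top degree we have $\mathrm{d}\,\omega_{\mathrm{vol}}=0$, so Cartan's magic formula $\mathcal{L}_Y = \mathrm{d}(Y\lrcorner\,{\cdot}\,) + Y\lrcorner\,\mathrm{d}\,{\cdot}$ together with the well-known identity $\mathcal{L}_Y\,\omega_{\mathrm{vol}} = \mathrm{div}(Y)\,\omega_{\mathrm{vol}}$ (a standard fact, verified in a local frame from the definition of $\mathrm{div}$) yields $\mathrm{d}\eta = \mathrm{div}(uX)\,\omega_{\mathrm{vol}}$. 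Second, the Leibniz rule $\mathrm{div}(uX) = g(\nabla u, X) + u\,\mathrm{div}(X)$ is immediate from $D_Z(uX) = \mathrm{d}u(Z)\,X + u\,D_ZX$ and the definition of $\nabla u$. Third, along $\partial\Omega$ one splits $X = g(X,\nu)\,\nu + X^{\top}$ with $X^{\top}$ tangent to $\partial\Omega$; since $\imath_{\partial\Omega}{}^*\eta$ evaluated on $d-1$ vectors tangent to $\partial\Omega$ involves $\omega_{\mathrm{vol}}(X^{\top},{\cdot},\dots,{\cdot})$ applied to $d$ vectors all tangent to the $(d-1)$-dimensional hypersurface, the $X^{\top}$-contribution vanishes and one is left with $\imath_{\partial\Omega}{}^*\eta = u\,g(X,\nu)\,\imath_{\partial\Omega}{}^*(\nu\lrcorner\omega_{\mathrm{vol}})$. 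Granting these, Stokes' Theorem $\int_\Omega\mathrm{d}\eta = \int_{\partial\Omega}\imath_{\partial\Omega}{}^*\eta$ becomes $\int_\Omega\mathrm{div}(uX)\,\omega_{\mathrm{vol}} = \int_{\partial\Omega} u\,g(X,\nu)\,\imath_{\partial\Omega}{}^*(\nu\lrcorner\omega_{\mathrm{vol}})$, and expanding the left-hand side by the Leibniz rule gives \eqref{intpart}.

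The one genuinely delicate point is that Stokes' Theorem in its classical form requires the form to be of class $\mathcal{C}^1$ up to $\partial\Omega$, whereas $\eta$ is only $\mathcal{C}^1$ on the open set $\Omega$ and merely continuous on $\overline{\Omega}$. To handle this I would exhaust $\Omega$ from inside by smooth domains: using the normal exponential map to produce a collar $\partial\Omega\times[0,\delta_0)\hookrightarrow\overline{\Omega}$, set $\Omega_\delta$ to be the complement in $\Omega$ of the sub-collar $\partial\Omega\times[0,\delta)$ for $\delta\in(0,\delta_0)$, so that $\overline{\Omega_\delta}\subset\Omega$ is a smooth domain and, as $\delta\to 0^+$, $\partial\Omega_\delta$ converges to $\partial\Omega$ in $\mathcal{C}^\infty$, with outer unit normals $\nu_\delta\to\nu$ and induced volume forms converging. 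Applying the smooth version of \eqref{intpart} on each $\Omega_\delta$ and letting $\delta\to 0^+$, the boundary integrals converge since $uX$ is continuous on the compact $\overline{\Omega}$ and the boundary geometry of $\Omega_\delta$ converges, while the interior integrals converge by dominated convergence, using that the left-hand side of \eqref{intpart} is absolutely integrable on $\Omega$ (which is implicitly what makes the identity meaningful).

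I expect the main obstacle to be exactly this limiting step, namely securing enough uniform control on the approximating boundaries $\partial\Omega_\delta$ — uniform bounds on $\nu_\delta$ and on the density of the induced measure with respect to a fixed reference measure on $\partial\Omega$ — to justify passing the boundary integral to the limit; all of this, however, is a standard consequence of the tubular neighborhood theorem, so that it is really the only content beyond the smooth case, the algebraic identities and the invocation of Stokes' Theorem being routine.
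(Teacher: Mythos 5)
Your argument is essentially the paper's own proof: both rest on Cartan's magic formula together with $\mathcal{L}_Y\omega_{\mathrm{vol}}=\mathrm{div}(Y)\,\omega_{\mathrm{vol}}$ to write $\mathrm{div}(uX)\,\omega_{\mathrm{vol}}$ as the exact form $\mathrm{d}\big(u\,X\lrcorner\omega_{\mathrm{vol}}\big)$, followed by Stokes' Theorem and the normal/tangential splitting of $X$ to identify the boundary integrand. The only difference is that you also supply an exhaustion argument to cover the stated (merely $\mathcal{C}^1$-interior, $\mathcal{C}^0$-up-to-the-boundary) regularity, a point the paper passes over silently and which is harmless in its applications, where the integrands are in fact $\mathcal{C}^1$ up to $\partial\Omega$.
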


\begin{proof}
We first observe that
$$\mathcal{L}_X(u\,\omega_{\mathrm{vol}}) = (\mathcal{L}_Xu)\,\omega_{\mathrm{vol}} + u\,(\mathcal{L}_X\omega_{\mathrm{vol}}) = g(\nabla u, X)\,\omega_{\mathrm{vol}} +u \, \mathrm{div}(X) \, \omega_{\mathrm{vol}} \,\, .$$
Moreover, by the Cartan Magic Formula
$$\mathcal{L}_X(u\,\omega_{\mathrm{vol}}) = \mathrm{d}(u\, X\lrcorner \omega_{\mathrm{vol}}) + X\lrcorner \mathrm{d}(u\,\omega_{\mathrm{vol}}) = \mathrm{d}(u\, X\lrcorner \omega_{\mathrm{vol}})$$
and so, by the Stokes Theorem
\begin{align*}
\int_{\Omega} g(\nabla u,X) &\, \omega_{\mathrm{vol}} + \int_{\Omega} u \, \mathrm{div}(X) \, \omega_{\mathrm{vol}} = \\
&= \int_{\Omega} \mathrm{d}(u\, X\lrcorner \omega_{\mathrm{vol}}) = \int_{\partial \Omega} \imath_{\partial\Omega}{}^*(u\, X\lrcorner \omega_{\mathrm{vol}}) = \int_{\partial \Omega} u\,g(X, \nu) \, \imath_{\partial\Omega}{}^*(\nu \lrcorner \omega_{\mathrm{vol}}) \,\, ,
\end{align*}
which concludes the proof.
\end{proof}

For the sake of notation, we denote by $\partial_\nu u$ the normal derivative
$$\partial_\nu u\coloneqq g(\nabla u,\nu) \,\, .$$
Moreover, we recall that the {\it second fundamental form of $\partial \Omega$} is the symmetric $(0,2)$-tensor field
\begin{equation}
\mathrm{II}: T\partial \Omega \otimes T\partial \Omega \to \mathbb{R} \,\, , \quad \mathrm{II}(X,Y) \coloneqq -g(D_XY,\nu) \quad \text{ for any } X, Y \in \mathcal{C}^{\infty}(\partial \Omega,T\partial \Omega) \,\, .
\end{equation}
For the sake of shortness, we introduce the following notation:

\begin{definition} \label{classO}
We denote by $\mathcal{O}$ the class of all the domains $\Omega \subset M$ with the following property: $\Omega$ has compact closure and the boundary $\partial \Omega$ is an orientable, embedded hypersurface of class $\mathcal{C}^{\infty}$. For any $\Omega \in \mathcal{O}$, we denote by $\nu$ the non-vanishing, unitary, normal, outward-pointing vector field on $\partial \Omega$ and by $\mathrm{II}$ the second fundamental form of $\partial \Omega$. Furthermore,  we define the subset
$$
\mathcal{O}^+ \coloneqq \big\{\Omega \in \mathcal{O} : \text{ the second fundamental form $\mathrm{II}$ of $\partial \Omega$ is non-negative definite}\big\}
$$
\end{definition}

We stress that a domain $\Omega \in \mathcal{O}$ belongs to the class $\mathcal{O}^+$ if and only if it is {\it locally geodetically convex}, i.e. for any $x \in \partial\Omega$ there exists a neighborhood $\mathcal{U}_x \subset M$ of $x$ such that the intersection $\mathcal{U}_x \cap \Omega$ is geodetically convex \cite{Bishop}. Moreover, if $(M,g)$ has nonpositive sectional curvature, e.g. the Euclidean space or the hyperbolic space, then the following strengthened version of this result holds true: a domain $\Omega \in \mathcal{O}$ belongs to the class $\mathcal{O}^+$ if and only if it is geodetically convex \cite[Theorem 1]{Alexander}. According to this fact, the following lemma is a direct generalization of \cite[Lemme I.1]{Lio80}.

\begin{lemma} \label{segno}
Let $\Omega \subset M$ be a domain in the class $\mathcal{O}$ and $u \in \mathcal{C}^2(\overline{\Omega},\mathbb{R})$ a function satisfying the Neumann condition $\partial_{\nu} u =0$ on $\partial \Omega$. If $w \coloneqq \tfrac12|\nabla u |^2$, then
$$
\partial_\nu w = -\mathrm{II}(\nabla u,\nabla u) \,\, .
$$
In particular, if $\Omega \in \mathcal{O}^+$, then $\partial_\nu w \leq 0$.
\end{lemma}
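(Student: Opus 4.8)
The plan is to read off $\partial_\nu w$ directly from the pointwise identity $\nabla w = D^2u(\nabla u)$ recorded in \eqref{gradw} — which already holds for $u \in \mathcal{C}^2$, hence in our setting — and then to recognise the resulting boundary term as minus the second fundamental form evaluated on $\nabla u$. Pairing \eqref{gradw} with the unit normal $\nu$ and using the definition $D^2u(X)=D_X(\nabla u)$ gives
\[
\partial_\nu w = g(\nabla w,\nu) = g\big(D^2u(\nabla u),\nu\big) = g\big(D_{\nabla u}\nabla u,\nu\big) \,\, ,
\]
so the whole statement reduces to identifying $g(D_{\nabla u}\nabla u,\nu)$ along $\partial\Omega$.

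The Neumann condition is used exactly once, to observe that $\nabla u$ is tangent to $\partial\Omega$ along $\partial\Omega$: indeed $g(\nabla u,\nu)=\partial_\nu u=0$ there. Consequently $\nabla u|_{\partial\Omega}$ is a $\mathcal{C}^1$ tangent vector field on $\partial\Omega$, and differentiating the identity $g(\nabla u,\nu)\equiv 0$ in the (tangential) direction $\nabla u$ yields
\[
0 = \nabla u\big(g(\nabla u,\nu)\big) = g\big(D_{\nabla u}\nabla u,\nu\big) + g\big(\nabla u,D_{\nabla u}\nu\big) \,\, .
\]
Since $\nabla u$ is tangent, the Weingarten relation $g(D_X\nu,Y)=\mathrm{II}(X,Y)$ for tangent $Y$ — itself obtained by differentiating $g(Y,\nu)\equiv0$ and comparing with the definition $\mathrm{II}(X,Y)=-g(D_XY,\nu)$ — gives $g(\nabla u,D_{\nabla u}\nu)=\mathrm{II}(\nabla u,\nabla u)$. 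Combining the last two displays, $g(D_{\nabla u}\nabla u,\nu)=-\mathrm{II}(\nabla u,\nabla u)$, and therefore $\partial_\nu w=-\mathrm{II}(\nabla u,\nabla u)$. The final assertion is then immediate: for $\Omega\in\mathcal{O}^+$ the form $\mathrm{II}$ is non-negative definite, so $\partial_\nu w\le 0$.

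I do not expect any genuine obstacle. The only delicate point is the routine bookkeeping between the ambient Levi-Civita connection $D$ and the intrinsic geometry of the hypersurface $\partial\Omega$: one should note that, once $X$ is tangent to $\partial\Omega$, the quantities $D_X\nabla u$ and $D_X\nu$ paired with $\nu$ depend only on the restrictions of $\nabla u$ and $\nu$ to $\partial\Omega$, so the computation above is legitimate and the expression $-g(D_XY,\nu)$ with both slots filled by the tangential field $\nabla u$ genuinely equals $\mathrm{II}(\nabla u,\nabla u)$ — this is just the Gauss formula. Alternatively one can phrase the argument symmetrically: from \eqref{gradw} and the symmetry of $D^2u$ one has $g(D^2u(X),\nu)=g(D_X\nabla u,\nu)=-\mathrm{II}(X,\nabla u)$ for every tangent $X$, and specialising $X=\nabla u$ gives the claim.
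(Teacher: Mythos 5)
Your argument is correct and coincides with the paper's proof: both reduce $\partial_\nu w$ to $g(D_{\nabla u}\nabla u,\nu)$ via \eqref{gradw}, use the Neumann condition to see that $\nabla u$ is tangent to $\partial\Omega$, and then invoke the definition $\mathrm{II}(X,Y)=-g(D_XY,\nu)$ on tangent fields. Your extra detour through the Weingarten relation and the remark on well-definedness are just more detailed justifications of what the paper calls ``a straightforward computation.''
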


\begin{proof} Notice that the Neumann condition $\partial_{\nu} u =0$ is equivalent to the gradient $\nabla u$ being tangent to the boundary $\partial \Omega$. Then, by \eqref{gradw} and a straightforward computation
$$
\partial_\nu w = g(D^2u(\nabla u), \nu) = g(D_{\nabla u}\nabla u,\nu) = -\mathrm{II}(\nabla u,\nabla u)
$$
and so we get the thesis.
\end{proof}

\subsection{Sobolev spaces and inequalities} \hfill \par

Let $\Omega \subset M$ be a domain in the class $\mathcal{O}$ and $E \to M$ a vector bundle of tensors over $M$, e.g. the tangent bundle $TM \to M$ or the bundle of symmetric endomorphisms ${\rm Sym}(TM,g) \to M$. For any smooth tensor field $T \in \mathcal{C}^{\infty}(\Omega,E|_{\Omega})$ and for any $1 \leq p < \infty$, we set
$$
\| T \|_{L^p(\Omega,E|_{\Omega})} \coloneqq \left({\textstyle\int_{\Omega}} |T|^p \,\omega_{\mathrm{vol}} \right)^{\frac1p} \,\, .
$$
Furthermore, given $k \in \mathbb{Z}$, $k \geq0$, we define
$$
\| T \|_{W^{k,p}(\Omega,E|_{\Omega})} \coloneqq \sum_{j=0}^k \| D^j T \|_{L^p(\Omega,E^{(j)}|_{\Omega})} \,\, , \quad \text{ with } E^{(j)} \coloneqq (T^*M)^{\otimes j} \otimes E \,\, ,
$$
where $D^j T$ denotes the $j$-th covariant derivative of $T$. Then, we define the Sobolev space $W^{k,p}(\Omega,E|_{\Omega})$, resp. $W^{k,p}_0(\Omega,E|_{\Omega})$, as the completion of $\mathcal{C}^{\infty}(\overline{\Omega},E|_{\Omega})$, resp. $\mathcal{C}^{\infty}_{\rm c}(\Omega,E|_{\Omega})$, with respect to the norm $\| \cdot \|_{W^{k,p}(\Omega,E|_{\Omega})}$. As usual, we adopt the notation $L^p(\Omega,E|_{\Omega}) \coloneqq W^{0,p}(\Omega,E|_{\Omega})$. Here, $\mathcal{C}^{\infty}_{\rm c}(\Omega,E|_{\Omega})$ denotes the space of smooth sections of $E$ with compact support in $\Omega$. In the special case of the trivial line bundle $E=M \times \mathbb{R}$, we use the notations $L^p(\Omega,\mathbb{R})$ and $W^{k,p}(\Omega,\mathbb{R})$. \smallskip

Being $\overline{\Omega}$ compact, it holds that the definition of $W^{k,p}(\Omega,E|_{\Omega})$ does not depend on the Riemannian metric $g$ \cite[Proposition 2.2]{HebeyBook} and that $W^{k,q}(\Omega,E|_{\Omega}) \subset W^{k,p}(\Omega,E|_{\Omega}) \subset W^{k,1}(\Omega,E|_{\Omega})$ for any $1 \leq p \leq q < \infty$. Moreover, the following Sobolev embeddings hold (see \cite[Chapter 10]{HebeyBook}).

\begin{prop}[Sobolev Inequality]
Let $\Omega \subset M$ be a domain in the class $\mathcal{O}$. Then $W^{1,2}(\Omega,\mathbb{R}) \subset L^{\frac{2d}{d-2}}(\Omega,\mathbb{R})$, i.e. there exists a constant $C_{\rm Sob}(\Omega)>0$ such that
\begin{equation} \label{sob}
\|u\|_{L^{\frac{2d}{d-2}}(\Omega,\mathbb{R})}\leq C_{\rm Sob}(\Omega)\big(\|\nabla u\|_{L^2(\Omega,TM|_{\Omega})}+\|u\|_{L^2(\Omega,\mathbb{R})}\big)
\end{equation}
for any function $u\in W^{1,2}(\Omega,\mathbb{R})$.
\end{prop}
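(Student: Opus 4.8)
The plan is to reduce the statement to the classical Gagliardo--Nirenberg--Sobolev inequality in $\mathbb{R}^d$ by means of a finite atlas and a subordinate partition of unity, using in an essential way that $\overline{\Omega}$ is compact with smooth boundary and that $d \geq 3$.

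First I would cover $\overline{\Omega}$ by finitely many charts of two types: \emph{interior charts} $\varphi_i\colon \mathcal{U}_i \to B_1 \subset \mathbb{R}^d$ with $\mathcal{U}_i \subset \Omega$, and \emph{boundary charts} $\varphi_i\colon \mathcal{U}_i \to B_1^+ \coloneqq B_1 \cap \{x_d \geq 0\}$ with $\varphi_i(\mathcal{U}_i \cap \Omega) = B_1 \cap \{x_d>0\}$ and $\varphi_i(\mathcal{U}_i \cap \partial\Omega) = B_1 \cap \{x_d=0\}$; such boundary charts exist because $\partial\Omega$ is an embedded hypersurface of class $\mathcal{C}^\infty$ (e.g. boundary normal coordinates). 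By compactness the cover can be taken finite, say $i = 1, \dots, N$, and I fix a smooth partition of unity $\{\chi_i\}$ subordinate to $\{\mathcal{U}_i\}$ with $\sum_i \chi_i \equiv 1$ on a neighborhood of $\overline{\Omega}$; each $\chi_i$ and each $|\nabla\chi_i|$ are then bounded, again by compactness. In each chart, the components of $g$ and of $g^{-1}$, together with $\sqrt{\det g}$, are bounded above and below by positive constants, so for a function supported in $\mathcal{U}_i$ the intrinsic norms $\|\cdot\|_{L^p(\Omega)}$ and $\|\nabla(\cdot)\|_{L^2(\Omega)}$ are comparable, with constants independent of the function, to the corresponding Euclidean norms of its coordinate representative.

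Next, I fix $u \in \mathcal{C}^\infty(\overline{\Omega},\mathbb{R})$ and write $u = \sum_i \chi_i u$. For an interior index, $v_i \coloneqq (\chi_i u)\circ\varphi_i^{-1}$ extends by zero to a function in $\mathcal{C}^\infty_{\mathrm c}(\mathbb{R}^d)$, and the Euclidean Sobolev inequality gives $\|v_i\|_{L^{2d/(d-2)}(\mathbb{R}^d)} \leq C \|\nabla v_i\|_{L^2(\mathbb{R}^d)}$. For a boundary index, I would first reflect evenly: the function $v_i$ on $B_1^+$, extended by zero outside, can be reflected as $\tilde v_i(x',x_d) \coloneqq v_i(x', |x_d|)$, which lies in $W^{1,2}(\mathbb{R}^d)$ with compact support (even reflection across $\{x_d=0\}$ introduces no jump and at most doubles the $W^{1,2}$ norm), so again $\|v_i\|_{L^{2d/(d-2)}(B_1^+)} \leq \|\tilde v_i\|_{L^{2d/(d-2)}(\mathbb{R}^d)} \leq C\|\nabla \tilde v_i\|_{L^2(\mathbb{R}^d)} \leq C'\|\nabla v_i\|_{L^2(B_1^+)}$. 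Transferring back via the metric comparison and summing over $i$,
\[
\|u\|_{L^{\frac{2d}{d-2}}(\Omega,\mathbb{R})} \leq \sum_{i=1}^N \|\chi_i u\|_{L^{\frac{2d}{d-2}}(\Omega,\mathbb{R})} \leq C \sum_{i=1}^N \|\nabla(\chi_i u)\|_{L^2(\Omega,TM|_\Omega)} \,\, ,
\]
and the Leibniz rule $\nabla(\chi_i u) = \chi_i \nabla u + u\,\nabla\chi_i$ together with the bounds on $\chi_i$ and $|\nabla\chi_i|$ yields $\|u\|_{L^{2d/(d-2)}(\Omega,\mathbb{R})} \leq C_{\mathrm{Sob}}(\Omega)\big(\|\nabla u\|_{L^2(\Omega,TM|_\Omega)} + \|u\|_{L^2(\Omega,\mathbb{R})}\big)$.

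Finally, since by definition $W^{1,2}(\Omega,\mathbb{R})$ is the completion of $\mathcal{C}^\infty(\overline{\Omega},\mathbb{R})$ in the $W^{1,2}$ norm, the inequality extends to all of $W^{1,2}(\Omega,\mathbb{R})$: a Cauchy sequence in $W^{1,2}$ is, by the estimate just proven, Cauchy in $L^{2d/(d-2)}$, so its limit lies in that space and the bound passes to the limit. The only genuinely delicate point is the treatment of the boundary charts — one needs $\partial\Omega$ regular enough to be straightened (smoothness of the embedded hypersurface suffices) and the fact that even reflection preserves membership in $W^{1,2}$; everything else is the Euclidean Sobolev inequality plus bookkeeping. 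Alternatively, one could package the boundary analysis into a bounded linear extension operator $W^{1,2}(\Omega,\mathbb{R}) \to W^{1,2}_0(\Omega',\mathbb{R})$ for a relatively compact $\Omega' \supset \overline{\Omega}$ and then invoke an interior inequality, but the construction of that operator rests on the same reflection device.
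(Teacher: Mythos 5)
Your argument is correct, but note that the paper does not actually prove this proposition: it is stated as a known embedding with a citation to Hebey's book (Chapter 10), so there is no in-text proof to compare against. What you have written is essentially the standard proof of that cited result: localize with a finite atlas and a subordinate partition of unity, use the uniform two-sided bounds on $g$, $g^{-1}$ and $\sqrt{\det g}$ on the (compact) supports of the cutoffs to pass to Euclidean norms, apply the Gagliardo--Nirenberg--Sobolev inequality to the compactly supported pieces (after even reflection across the flattened boundary for the boundary charts), and conclude by the Leibniz rule and density, which matches the paper's definition of $W^{1,2}(\Omega,\mathbb{R})$ as the completion of $\mathcal{C}^{\infty}(\overline{\Omega},\mathbb{R})$. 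Two small points worth making explicit: the metric comparability constants must be taken uniform over the supports of the $\chi_i$ (arrange the charts so that these supports are compactly contained in the coordinate domains, exactly as in conditions $(iv)$--$(v)$ of the paper's Lemma \ref{lem:extSob}); and the Euclidean inequality is applied to the reflected function $\tilde v_i$, which is compactly supported and in $W^{1,2}(\mathbb{R}^d)$ but not smooth across $\{x_d=0\}$, so one should invoke the $W^{1,2}$ (rather than $\mathcal{C}^\infty_{\rm c}$) form of Gagliardo--Nirenberg--Sobolev, valid by density. Your closing alternative --- packaging the boundary analysis into a bounded extension operator and then using an interior inequality --- is in fact the route the paper itself sets up for a different purpose in Lemma \ref{lem:extSob}, so either presentation is consistent with the framework of the paper.
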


As in the Euclidean case \cite[Thm 5.22]{AdamsF}, the following extension result holds true for domains in the class $\mathcal{O}$. For the sake of completeness we sketch the proof, which is very similar to the Euclidean counterpart.

\begin{lemma} \label{lem:extSob}
Let $\Omega_1, \Omega_2 \subset M$ be two domains in the class $\mathcal{O}$ such that $\overline{\Omega}_1 \subset \Omega_2$. Then, there exist a linear operator
$$
T: \mathcal{C}^{\infty}\big(\overline{\Omega}_1,\mathbb{R}\big) \to \mathcal{C}^{\infty}_{\rm c}\big(\Omega_2,\mathbb{R}\big)
$$
which verifies the following properties: \begin{itemize}
\item[$a)$] for any $u \in \mathcal{C}^2\big(\overline{\Omega}_1,\mathbb{R}\big)$ it holds
$$
(Tu)|_{\overline{\Omega}_1} = u \,\, ;
$$
\item[$b)$] for any $k \geq0$ and $1 \leq p < \infty$, $T$ can be extended to a bounded linear operator
$$
T: W^{k,p}\big(\Omega_1,\mathbb{R}\big) \to W^{k,p}_0\big(\Omega_2,\mathbb{R}\big) \,\, .
$$
\item[$c)$] for any $1 \leq p < \infty$, there exists a constant $K_p>0$ such that
$$
\|\Delta (Tu)\|_{L^p(\Omega_2,\mathbb{R})} \leq K_p \, \|\Delta u\|_{L^p(\Omega_1,\mathbb{R})}
$$
for any $u \in \mathcal{C}^2\big(\overline{\Omega}_1,\mathbb{R}\big)$
\end{itemize}
\end{lemma}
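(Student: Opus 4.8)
The plan is the classical one: flatten $\partial\Omega_1$, reflect across the flattened boundary, patch with a partition of unity, and then check $a)$--$c)$. Since $\partial\Omega_1$ is a compact embedded $\mathcal{C}^\infty$ hypersurface and $\overline{\Omega}_1\subset\Omega_2$, I would first cover $\partial\Omega_1$ by finitely many open sets $U_1,\dots,U_N\subset M$ with $\overline{U}_i\subset\Omega_2$, each carrying a boundary chart $\varphi_i\colon U_i\to(-\varepsilon_i,\varepsilon_i)\times V_i'$ that sends $U_i\cap\Omega_1$ onto $(0,\varepsilon_i)\times V_i'$ and $U_i\cap\partial\Omega_1$ onto $\{0\}\times V_i'$; then add an open set $U_0$ with $\overline{U}_0\subset\Omega_1$ so that $U_0,\dots,U_N$ cover $\overline{\Omega}_1$, and pick a subordinate partition of unity $\chi_0,\dots,\chi_N$ with $\sum_i\chi_i\equiv 1$ on a neighbourhood of $\overline{\Omega}_1$.

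Next I would fix, once and for all, a half-space reflection operator $\mathcal{R}$ (e.g.\ a higher-order Hestenes reflection, or the total extension operator, cf.\ \cite[Thm.~5.22]{AdamsF}) with the properties: $\mathcal{R}v=v$ on $\{t\ge 0\}$; $\mathcal{R}$ maps $\mathcal{C}^\infty$ of the closed half-space into $\mathcal{C}^\infty$; $\mathcal{R}\colon W^{k,p}\to W^{k,p}$ is bounded for every $k\ge 0$ and $1\le p<\infty$; and $\mathcal{R}$ does not enlarge supports in the tangential directions. For $u\in\mathcal{C}^\infty(\overline{\Omega}_1)$ I would set $u_i:=\chi_i u$; for $i\ge 1$, transport $u_i$ through $\varphi_i$ to the half-space, apply $\mathcal{R}$, multiply by a fixed $\psi_i\in\mathcal{C}^\infty_{\rm c}(U_i)$ with $\psi_i\equiv 1$ on $\operatorname{supp}\chi_i$, and transport back to a function $v_i\in\mathcal{C}^\infty_{\rm c}(U_i)$; and I would let $v_0$ be $\chi_0 u$ extended by $0$. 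Then $Tu:=\sum_{i=0}^N v_i$ is smooth with support in $\bigcup_i\overline{U}_i\Subset\Omega_2$, hence $Tu\in\mathcal{C}^\infty_{\rm c}(\Omega_2)$, and $T$ is linear.

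Properties $a)$ and $b)$ are then routine. Since $\mathcal{R}$ acts as the identity over $\{t\ge 0\}$, one gets $v_i|_{\overline{\Omega}_1}=\chi_i u$ and hence $Tu|_{\overline{\Omega}_1}=\sum_i\chi_i u=u$, which is $a)$; the computation uses only the restriction of $\mathcal{R}$ to $\{t\ge 0\}$, so it is valid already for $u\in\mathcal{C}^2(\overline{\Omega}_1)$. For $b)$, each ingredient of the construction — multiplication by $\chi_i$ or $\psi_i$, pullback and pushforward by the $\varphi_i$ (diffeomorphisms with all derivatives bounded on the relatively compact charts), the operator $\mathcal{R}$, and the finite sum — is bounded on $W^{k,p}$; composing them gives boundedness $W^{k,p}(\Omega_1)\to W^{k,p}(\Omega_2)$, and since $Tu\in\mathcal{C}^\infty_{\rm c}(\Omega_2)$ for smooth $u$, a density argument upgrades the target to $W^{k,p}_0(\Omega_2)$.

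The delicate point, and the step I expect to be the main obstacle, is $c)$. On $\Omega_1$ we have $Tu=u$, so that region contributes exactly $\|\Delta u\|_{L^p(\Omega_1)}$; on the collar $\Omega_2\setminus\overline{\Omega}_1$ only the boundary pieces $v_i$ ($i\ge 1$) survive, and here it is essential that, precisely because $\overline{\Omega}_1\subset\Omega_2$, the metric coefficients $g^{ab}$ and $\sqrt{\det g}$ are genuine smooth functions on the whole chart $(-\varepsilon_i,\varepsilon_i)\times V_i'$. One would expand $\Delta_g v_i=\tfrac{1}{\sqrt{\det g}}\partial_a\big(\sqrt{\det g}\,g^{ab}\partial_b v_i\big)$ in these coordinates, use that $v_i$ on the collar is a fixed combination of rescaled reflections of $u$ so that its second- and first-order derivatives are combinations of those of $u$ composed with the reflections, and compare with the corresponding reflection of $\Delta_g u$. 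The difficulty is that a naive reflection sends $\Delta=\Delta_{\rm tang}+\partial_{tt}$ to an unbalanced combination of $\Delta_{\rm tang}v$ and $\partial_{tt}v$, and the curved metric produces further lower-order errors, so the reflection and the bookkeeping of these error terms must be arranged so that the right-hand side collapses to $K_p\|\Delta u\|_{L^p(\Omega_1)}$ rather than to the full Hessian norm of $u$; this cancellation/absorption is the crux of the whole argument.
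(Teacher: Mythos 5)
Your construction and your treatment of parts $a)$ and $b)$ coincide, essentially step by step, with the paper's proof: finitely many boundary charts flattening $\partial\Omega_1$ inside $\Omega_2$, the Euclidean half-space extension operator of Adams--Fournier, a partition of unity, and the observation that $T$ automatically lands in $W^{k,p}_0(\Omega_2)$ because $Tu$ is compactly supported for smooth $u$. Up to that point there is nothing to object to.

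The gap is part $c)$, which you explicitly leave open, and here your instinct that something is off is worth making precise. The ``cancellation'' you are trying to engineer --- a bound of $\|\Delta(Tu)\|_{L^p(\Omega_2)}$ by $K_p\|\Delta u\|_{L^p(\Omega_1)}$ \emph{alone} --- cannot exist for any operator satisfying $a)$: take $u\equiv1$. Then $c)$ forces $\Delta(Tu)=0$ a.e.\ on $\Omega_2$, and since $Tu\in\mathcal C^{\infty}_{\rm c}(\Omega_2)$, the integration by parts formula \eqref{intpart} gives $\int_{\Omega_2}|\nabla(Tu)|^2\,\omega_{\mathrm{vol}}=-\int_{\Omega_2}(Tu)\,\Delta(Tu)\,\omega_{\mathrm{vol}}=0$, so $Tu$ is constant on the connected set $\Omega_2$ and vanishes near $\partial\Omega_2$, hence $Tu\equiv0$, contradicting $(Tu)|_{\overline\Omega_1}=1$. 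So the inequality in $c)$ must be read with lower-order norms of $u$ added on the right-hand side (say $+\|u\|_{W^{1,p}(\Omega_1)}$); this is the form in which the half-space estimate is actually stated in Adams--Fournier, and it is all the subsequent Calder\'on--Zygmund application needs, since the G\"uneysu--Pigola inequality itself carries a $+\|u\|_{L^p}$ term. Once the target is corrected, the paper does not carry out the reflection bookkeeping you describe: it imports, as a listed property of the flat half-space operator $\tilde T$ of \cite{AdamsF}, the bound of $\|\Delta_{\rm eucl}(\tilde Tv)\|_{L^p}$ by $\|\Delta_{\rm eucl}v\|_{L^p}$ (plus, in the corrected reading, lower-order terms), and then in each chart writes $\Delta_g$ as the Euclidean Laplacian plus the perturbation $(g_s^{ab}-\delta^{ab})\partial_a\partial_b$ and first-order terms, the commutators with the cutoffs $\eta_s$ being first order in $u$ as well. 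In short: you have correctly located the crux, but the way through it is to weaken the inequality (which is unavoidable) and quote the flat model, not to find a reflection commuting with the Laplacian.
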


\begin{proof}
In the following proof, we denote by $B(0,r) \coloneqq \{y \in \mathbb{R}^d : |y|<r \big\}$ the Euclidean ball in $\mathbb{R}^d$ of radius $r>0$ centered at the origin and by $\mathscr{B}_g(x,r)$ the open ball in $M$ of radius $r>0$ centered at the point $x \in M$ determined by the Riemannian distance $\mathtt{d}_g$ induced by $g$. Moreover, for any subset $E \subset \mathbb{R}^d$, we set
$$\begin{gathered}
E^{(>0)} \coloneqq \big\{y = (y^1,{\dots},y^d) \in E : y^d>0\big\} \,\, , \quad
E^{(=0)} \coloneqq \big\{y = (y^1,{\dots},y^d) \in E : y^d=0\big\} \,\, , \\
E^{(<0)} \coloneqq \big\{y = (y^1,{\dots},y^d) \in E : y^d<0\big\} \,\, , \quad 
E^{(\geq 0)} \coloneqq E^{(>0)} \cup E^{(=0)} \,\, .
\end{gathered}$$

By hypothesis, there exist $\epsilon>0$ and a finite sequence of points $\{x_1,{\dots},x_N\} \subset \partial \Omega_1$ on the boundary of $\Omega_1$ such that, setting $\mathcal{U}_i \coloneqq \mathscr{B}_g(x_i,\epsilon)$, the following properties hold true: \begin{itemize}
\item[$i)$] $\big\{\mathcal{U}_1,{\dots},\mathcal{U}_N\big\}$ is a finite open cover of $\partial \Omega_1$ and $\overline{\mathcal{U}}_s \subset \Omega_2$ for any $1 \leq s \leq N$;
\item[$ii)$] for any $1 \leq s \leq N$ there exists a smooth diffeomorphism $\psi_s : \mathcal{U}_s \to B(0,1)$ such that $\psi_s(x_s)=0$ and
$$
\psi_s(\mathcal{U}_s \cap \Omega_1) = B(0,1)^{(>0)} \,\, , \quad
\psi_s(\mathcal{U}_s \cap \partial \Omega_1) = B(0,1)^{(=0)} \,\, , \quad
\psi_s(\mathcal{U}_s \setminus \overline{\Omega}_1) = B(0,1)^{(<0)} \,\, ;
$$
\item[$iii)$] there exists $\delta>0$ such that the tubular neighborhood $\mathcal{N} \coloneqq \{x \in M : \mathtt{d}_g(x,\partial \Omega_1) < \delta\}$ verifies
$$
\mathcal{N} \subset \bigcup_{1 \leq s \leq N} \psi_s^{-1} \big(B(0,\tfrac12)\big) \,\, ;
$$
\item[$iv)$] the pulled-back metrics $g_s \coloneqq (\psi_s^{-1})^*g$ extends smoothly to $\overline{B(0,1)}$ and verifies
$$
(g_s)_{ij}(0) = \delta_{ij} \,\, , \quad \tfrac12\delta_{ij}\leq (g_s)_{ij}(y) \leq 2\delta_{ij} \,\, \text{ as bilinear forms for any $y \in B(0,1)$} \,\, ;
$$
\item[$v)$] for any $k \geq 0$ and $1 \leq p < \infty$, there exists $C_{k,p}>0$ such that
$$\begin{gathered}
\big\|\, (\psi_q|_{\mathcal{U}_s \cap \mathcal{U}_q}) \circ (\psi_s^{-1}|_{\psi_s(\mathcal{U}_s \cap \mathcal{U}_q)}) \,\big\|_{W^{k,p}(\psi_s(\mathcal{U}_s \cap \mathcal{U}_q))} \leq C_{k,p} \quad \text{for any $1 \leq s , q \leq N$ with $\mathcal{U}_s \cap \mathcal{U}_q \neq \emptyset$} \,\, , \\
\|(g_s)_{ij}\|_{W^{k,p}(B(0,1))} \leq C_{k,p} \quad \text{ for any } 1 \leq s \leq N \, , \,\, 1 \leq i,j \leq d \,\, . 
\end{gathered}$$
\end{itemize}
Let us consider the open set
$$
\mathcal{Q} \coloneqq \Big\{(y',y^d) \in \mathbb{R}^{d-1} \times \mathbb{R} : |y'| < \tfrac12 , |y^d| < \tfrac{\sqrt3}2 \Big\} \subset \mathbb{R}^d
$$
and observe that $B(0,\tfrac12) \subset \mathcal{Q} \subset B(0,1)$. Hence, the open sets $\mathcal{V}_k \coloneqq \psi_k^{-1}(\mathcal{Q}) \subset \mathcal{U}_k$ form a finite open cover of $\mathcal{N}$. We also choose another open set $\mathcal{V}_0 \subset \Omega_1$, bounded away from $\partial \Omega_1$, such that $\{\mathcal{V}_0,\mathcal{V}_1,{\dots},\mathcal{V}_N\}$ is an open cover of $\Omega_1$. We then take a smooth partition of unity $\{\eta_0,\eta_1,{\dots},\eta_N\}$ subordinate to $\{\mathcal{V}_0,\mathcal{V}_1,{\dots},\mathcal{V}_N\}$ and we assume that for any $k \geq 0$, $1 \leq p < \infty$ it holds
$$
\|\eta_s\|_{W^{k,p}(\Omega_2,\mathbb{R})} \leq C_{k,p} \quad \text{ for any } 1 \leq s \leq N \,\, .
$$
Fix $u \in \mathcal{C}^{\infty}\big(\overline{\Omega}_1,\mathbb{R}\big)$ and, for any $1 \leq s \leq N$, define
$$
u_s \coloneqq \big(\eta_s \circ (\psi_s^{-1}|_{B(0,1)^{(\geq0)}})\big) \cdot \big(u \circ (\psi_s^{-1}|_{B(0,1)^{(\geq0)}})\big) \,\, .$$
Since ${\rm supp}(u_s) \subset \mathcal{Q^{(\geq0)}}$, we can trivially extend it to a function $u_s \in \mathcal{C}^{\infty}\big((\mathbb{R}^d){}^{(\geq0)}, \mathbb{R}\big)$. By \cite[Thm 5.21]{AdamsF}, there exists a linear operator
$$
\tilde{T}: \mathcal{C}^{\infty}\big((\mathbb{R}^d){}^{(\geq0)}, \mathbb{R}\big) \to \mathcal{C}^{\infty}\big(\mathbb{R}^d,\mathbb{R}\big)
$$
such that, for any $v \in \mathcal{C}^{\infty}\big((\mathbb{R}^d){}^{(\geq0)}, \mathbb{R}\big)$ with ${\rm supp}(v) \subset \mathcal{Q^{(\geq0)}}$, the following properties hold: \begin{itemize}
\item[$\bcdot$] if $y=(y',y^d)$ with $y^d \geq 0$, then $(\tilde{T}v)(y) =v(y)$;
\item[$\bcdot$] ${\rm supp}(\tilde{T}v) \subset \mathcal{Q}$;
\item[$\bcdot$] $\| \tilde{T}v \|_{W^{k,p}(\mathcal{Q},\mathbb{R})} \leq C_{k,p} \, \| v \|_{W^{k,p}(\mathcal{Q}^{(>0)},\mathbb{R})}$ for any $k \geq 0$, $1 < p < \infty$;
\item[$\bcdot$] for any $1 < p <\infty$ there exists $\tilde{K}_p>0$ such that $\|\Delta (\tilde{T}v)\|_{L^p(\mathcal{Q},\mathbb{R})} \leq \tilde{K}_p \, \|\Delta v\|_{L^p(\mathcal{Q}^{(>0)},\mathbb{R})}$.
\end{itemize}
In the last two properties above, the Laplacian and the Sobolev norms in $ \mathcal{Q} \subset B(0,1)$ are taken with respect to the standard Euclidean metric.

Let us define now
$$
Tu \coloneqq (\eta_0 \cdot u) + \sum_{s=1}^N (\tilde{T}u_s) \circ \psi_s \,\, .
$$
Then, it follows by construction that $Tu \in \mathcal{C}^{\infty}_{\rm c}\big(\Omega_2,\mathbb{R}\big)$ and that $(Tu)(x)=u(x)$ for any $x \in \overline{\Omega}_1$. This gives rise to a linear operator $T: \mathcal{C}^{\infty}\big(\overline{\Omega}_1,\mathbb{R}\big) \to \mathcal{C}^{\infty}_{\rm c}\big(\Omega_2,\mathbb{R}\big)$ satisfying claim $(a)$. Moreover, a direct computation shows that for any $k \geq 0$ and $1 \leq p < \infty$, there exists a constant $L=L(k,p,d,C_{k,p},\Omega_1,\Omega_2)>0$ such that
$$
\| Tu \|_{W^{k,p}(\Omega_2,\mathbb{R})} \leq L \, \| u \|_{W^{k,p}(\Omega_1,\mathbb{R})} \,\, ,
$$
from which claim $(b)$ follows. Finally, a similar computation proves claim $(c)$.
\end{proof}

Therefore, from Lemma \ref{lem:extSob} and \cite[Thm 4.1]{GunPig}, the following Calder\'on-Zygmund inequality holds.

\begin{prop}[Calder\'on-Zygmund Inequality]\label{czineq}
Let $\Omega \subset M$ be a domain in the class $\mathcal{O}$. Then, for any $1 < p < \infty$ there exists $C_{\rm CZ}(\Omega,p)>0$ such that
\begin{equation}
\|D^2u\|_{L^p(\Omega,{\rm Sym}(TM,g)|_{\Omega})} \leq C_{\rm CZ}(\Omega,p) \, \|\Delta u\|_{L^p(\Omega,\mathbb{R})}
\end{equation}
for any function $u \in \mathcal{C}^{\infty}(\overline{\Omega},\mathbb{R})$.
\end{prop}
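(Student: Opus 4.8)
The plan is to deduce the inequality from the extension operator of Lemma \ref{lem:extSob} together with a Calder\'on--Zygmund inequality for compactly supported functions, which is the content of \cite[Thm 4.1]{GunPig}. The point is that for a compactly supported function there is no lower--order term to absorb, and the constant is allowed to depend on the (necessarily finite) geometry of the relatively compact ambient domain.

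First I would enlarge the domain: since $\overline{\Omega}$ is compact, a standard construction (cover $\overline{\Omega}$ by finitely many charts with relatively compact closure and pick a small regular value of a suitable smooth cutoff) produces a domain $\Omega_2 \in \mathcal{O}$ with $\overline{\Omega} \subset \Omega_2$. Setting $\Omega_1 \coloneqq \Omega$, let $T \colon \mathcal{C}^{\infty}(\overline{\Omega}_1,\mathbb{R}) \to \mathcal{C}^{\infty}_{\rm c}(\Omega_2,\mathbb{R})$ be the operator provided by Lemma \ref{lem:extSob}.

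Then, given $u \in \mathcal{C}^{\infty}(\overline{\Omega},\mathbb{R})$, property $(a)$ of Lemma \ref{lem:extSob} gives $(Tu)|_{\overline{\Omega}} = u$, so $D^2u$ agrees with $D^2(Tu)$ on $\Omega$ and hence
\[
\|D^2u\|_{L^p(\Omega,{\rm Sym}(TM,g)|_{\Omega})} \leq \|D^2(Tu)\|_{L^p(\Omega_2,{\rm Sym}(TM,g)|_{\Omega_2})} \,\, .
\]
Since $Tu \in \mathcal{C}^{\infty}_{\rm c}(\Omega_2,\mathbb{R})$ and $\overline{\Omega}_2$ is compact, \cite[Thm 4.1]{GunPig} furnishes a constant $C = C(\Omega_2,p)>0$ with $\|D^2(Tu)\|_{L^p(\Omega_2,{\rm Sym}(TM,g)|_{\Omega_2})} \leq C\,\|\Delta(Tu)\|_{L^p(\Omega_2,\mathbb{R})}$, while property $(c)$ of Lemma \ref{lem:extSob} bounds $\|\Delta(Tu)\|_{L^p(\Omega_2,\mathbb{R})} \leq K_p\,\|\Delta u\|_{L^p(\Omega,\mathbb{R})}$. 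Concatenating the three estimates yields the claim with $C_{\rm CZ}(\Omega,p) \coloneqq C\,K_p$, and the extension to general $u \in \mathcal{C}^{\infty}(\overline{\Omega},\mathbb{R})$ is already built in.

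The only delicate point is to make sure \cite[Thm 4.1]{GunPig} is applicable here: it provides an $L^p$-Calder\'on--Zygmund inequality for functions compactly supported in $\Omega_2$ with a constant depending on the geometry of $\overline{\Omega}_2$. Because $\overline{\Omega}_2$ is compact, the geometric quantities entering that constant (a lower Ricci bound and a positive lower bound on the injectivity radius over a neighborhood of $\overline{\Omega}_2$) are automatically finite, so no completeness or global curvature hypothesis on $(M,g)$ is needed; this is why one works on the relatively compact $\Omega_2$ rather than directly on $M$.
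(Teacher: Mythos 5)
Your argument is exactly the one the paper intends: the paper's entire ``proof'' is the single remark that the proposition follows from Lemma \ref{lem:extSob} and \cite[Thm 4.1]{GunPig}, and you have simply spelled out the concatenation (extend $u$ to $Tu\in\mathcal{C}^{\infty}_{\rm c}(\Omega_2,\mathbb{R})$ via property $(a)$, apply the Calder\'on--Zygmund inequality for compactly supported functions on the relatively compact $\Omega_2$, then control $\|\Delta(Tu)\|_{L^p(\Omega_2,\mathbb{R})}$ by $\|\Delta u\|_{L^p(\Omega,\mathbb{R})}$ via property $(c)$). This matches the paper's route, including the observation that compactness of $\overline{\Omega}_2$ makes the geometric quantities in the constant finite, so no further comparison is needed.
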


\medskip
\section{Main results of the paper} \label{sect_3}
\setcounter{equation} 0

Let $(M^d,g)$ be a Riemannian manifold and consider the Neumann problem
\begin{equation} \label{hjn}
\begin{cases}
-\Delta u+ \tfrac1{\gamma}|\nabla u|^\gamma+g(B,\nabla u) =f & \text{ in }\Omega\subset M \\
\partial_\nu u=0&\text{ on } \partial\Omega 
\end{cases}
\end{equation}
under the following assumptions on $\Omega$:
\begin{equation} \label{D1} \tag{D1}
\text{$\Omega \subset M$ is a domain in the class $\mathcal{O}^+$}\,\, ;
\end{equation}
\begin{equation} \label{D2} \tag{D2}
\begin{array}{c}
{\rm vol}(\Omega) \leq \rho \,\, , \\[0.5pt]
\text{$\mathrm{Ric}_x(\xi,\xi)\geq -\kappa |\xi|^2$ for any $x \in \Omega$, $\xi \in T_xM$}\, , \\
\text{the constant $C_{\rm Sob}(\Omega)$ in \eqref{sob} can be chosen such that $C_{\rm Sob}(\Omega)\leq\sigma$}\, ;
\end{array}
\end{equation}
and the following assumptions on the ingredients:
\begin{equation} \label{In1} \tag{In1}
\text{$f \in \mathcal{C}^1(\Omega,\mathbb{R}) \cap \mathcal{C}^0(\overline{\Omega},\mathbb{R})$}\,\, , \quad \text{$\gamma>1$} \,\, ;
\end{equation}
\begin{equation} \label{In2} \tag{In2}
\text{$B \in \mathcal{C}^1(\Omega,TM|_{\Omega}) \cap \mathcal{C}^0(\overline{\Omega},TM|_{\overline{\Omega}})$\, and there exists $s{>}d$ such that $\|B\|_{L^s(\Omega,\mathbb{R})} \leq \theta$} \,\, .
\end{equation}
Notice that in the conditions above $\kappa,\rho,\sigma,\theta$ denotes some given positive constants. \smallskip

It is well-known that in the Euclidean space $(\mathbb{R}^d,\langle\,,\rangle)$ the geometry of the domain plays a crucial role. In particular, it is shown in \cite[Lemme I.1]{Lio80} that when $\Omega$ is convex one has $\partial_\nu|\nabla u|^2\leq 0$. Then, after applying an integral Bernstein argument as in \cite{Lions85}, one gets the result, see \cite[Theorem 19]{CirantJMPA}. Here, we prove a more general version of \cite{CirantJMPA} assuming an appropriate counterpart of the above convexity assumption in the more general realm of Riemannian manifolds, see Lemma \ref{segno}, when the equation is perturbed with a first-order term having an unbounded coefficient. The gradient estimate is then achieved via a refined version of the Bochner's identity, see Lemma \ref{lemboc}, and adapting the integral technique in \cite{ll,Lions85}. Here, we stress that we work with classical solutions (or even strong solutions, cf \cite[Remark 3]{CGell} or \cite{BardiPerthame}), but bounds will depend only on the integrability properties of the data and, additionally, to a lower bound on the Ricci curvature coming from Bochner's formula, so this can be regarded as an a priori estimate. \smallskip

Our first main result reads as follows.

\begin{theorem} \label{main1}
Let $u \in \mathcal{C}^3(\Omega,\mathbb{R}) \cap \mathcal{C}^2(\overline{\Omega},\mathbb{R})$ be a solution of the Neumann problem \eqref{hjn} and assume that \eqref{D1}, \eqref{D2}, \eqref{In1}, \eqref{In2} hold true.
Then, there exists $\bar{r}=\bar{r}(d,\gamma)>1$ such that, for any $r>\bar{r}$, there exist a constant $C=C(d,\gamma,\kappa,\rho,\sigma,\theta,r,s)>0$ and an exponent $q=q(d,r)$, with $1 < q < d$, such that
\begin{equation} \label{thesis1}
\|\nabla u\|_{L^r(\Omega,TM|_{\Omega})}\leq C\big(1+\|f\|_{L^q(\Omega,\mathbb{R})}\big) \,\, , \quad \lim_{r \to +\infty}q(d,r) = d^{\,-} \,\, .
\end{equation}
\end{theorem}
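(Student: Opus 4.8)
The plan is to run an integral Bernstein scheme based on the refined Bochner identity of Lemma \ref{lemboc}. Set $w \coloneqq \tfrac12|\nabla u|^2$. First I would apply Lemma \ref{lemboc} with a power weight $h(t) = (1+t)^{1+\beta}$ for a parameter $\beta \geq 0$ to be tuned, obtaining a pointwise inequality of the form
\[
\Delta z \geq z^{(1)}\big( \tfrac1d(\Delta u)^2 + g(\nabla(\Delta u),\nabla u) - \kappa\,|\nabla u|^2\big) \,\, ,
\]
where $z = (1+w)^{1+\beta}$, using the Cauchy--Schwarz inequality $|D^2u|^2 \geq \tfrac1d(\Delta u)^2$, the Ricci lower bound from \eqref{D2}, and the sign $z^{(2)} \leq 0$ to discard the $|D^2u(\nabla u)|^2$ term. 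Next I would substitute the equation \eqref{hjn}: writing $\Delta u = \tfrac1\gamma|\nabla u|^\gamma + g(B,\nabla u) - f$, the term $\tfrac1d(\Delta u)^2$ produces a coercive contribution $\sim c_1 |\nabla u|^{2\gamma}$ together with lower-order terms controlled by $|B|^2|\nabla u|^2$ and $f^2$. The first-order term $g(\nabla(\Delta u),\nabla u)$, which contains third derivatives of $u$ and derivatives of $f$ and $B$, must be removed by integrating by parts over $\Omega$ against a test function; the cleanest way is to integrate the inequality $\int_\Omega \varphi\,\Delta z\,\omega_{\mathrm{vol}}$ for $\varphi$ a suitable power of $(1+w)$ and move the Laplacian onto $\varphi$, so that no derivative ever falls on $f$ or $B$.

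The boundary terms generated by integration by parts are handled by Lemma \ref{segno}: on $\partial\Omega$ one has $\partial_\nu w = -\mathrm{II}(\nabla u,\nabla u) \leq 0$ since $\Omega \in \mathcal{O}^+$ by \eqref{D1}, and the normal derivative of $z$ (a nondecreasing function of $w$) has the same sign, which gives exactly the favorable sign needed so that the boundary contribution can be dropped. After integration by parts, the left-hand side $\int_\Omega \varphi\,\Delta z$ is rewritten as $-\int_\Omega g(\nabla\varphi,\nabla z)$ plus the boundary term, and with $\varphi$ and $z$ chosen as matching powers of $(1+w)$ this produces, up to constants, a term $-\int_\Omega (1+w)^{a}|\nabla w|^2\,\omega_{\mathrm{vol}}$ with $a$ related to $\beta$; combined with $\nabla w = D^2u(\nabla u)$ and $|\nabla w|^2 \leq |D^2u|^2|\nabla u|^2$, and also with the third-order term $z^{(1)}g(\nabla(\Delta u),\nabla u)$ integrated by parts into $-\int g(\nabla(z^{(1)}\nabla u\,\varphi),\ldots)$ — I would in fact integrate the whole expression $\int \varphi z^{(1)} g(\nabla(\Delta u),\nabla u)$ directly by parts to convert it to $-\int \Delta u\,\mathrm{div}(\varphi z^{(1)}\nabla u)$, again dumping all derivatives onto the weights and onto $\Delta u$ which is then replaced by the equation. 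The upshot is a single integral inequality in which the coercive quantity $\int_\Omega (1+w)^{\beta}|\nabla u|^{2\gamma}\,\omega_{\mathrm{vol}}$ (equivalently $\|\nabla u\|_{L^{2\gamma+\beta\cdot 2}}$-type norms, up to renaming) is bounded by terms involving $\int (1+w)^{\beta}|B|^2|\nabla u|^2$, $\int (1+w)^{\beta} f^2$, $\int (1+w)^{\beta} |\nabla u|^{\gamma}$, lower powers, and crucially a gradient-of-power term $\int |\nabla((1+w)^{(\beta+1)/2})|^2$ that I keep on the good side.

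From here the conclusion is a bootstrap. Writing $\phi \coloneqq (1+w)^{(\beta+2)/4}$ or a similar power so that $|\nabla\phi|^2 \sim (1+w)^{(\beta-2)/2+\text{stuff}}|\nabla w|^2$, I apply the Sobolev inequality \eqref{sob} (with constant $\leq \sigma$ by \eqref{D2}) to $\phi$ to gain the exponent $\tfrac{2d}{d-2}$, then bound the right-hand side using H\"older (with exponents dictated by $s>d$ for the $B$-term and by some $q<d$ for the $f$-term), Young's inequality to absorb the coercive terms and the Sobolev term back to the left, and the volume bound ${\rm vol}(\Omega)\leq\rho$ to control the leftover constants. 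The interpolation of the mixed power terms $\int (1+w)^\beta |\nabla u|^\gamma \leq \varepsilon \int (1+w)^\beta|\nabla u|^{2\gamma} + C_\varepsilon \int (1+w)^\beta$ is where $\gamma>1$ is used. Choosing $\beta$ larger and larger pushes the integrability exponent $r$ of $\nabla u$ to $+\infty$, while the required integrability $q$ of $f$ increases toward $d$ but stays strictly below it — this is the content of $\lim_{r\to\infty} q(d,r) = d^{-}$, and the threshold $\bar r = \bar r(d,\gamma)$ is simply the smallest $r$ for which the H\"older/Young bookkeeping closes with some admissible $q>1$. The main obstacle is the bookkeeping of exponents: one must verify that the power of $(1+w)$ produced by the Sobolev step on the left strictly dominates (after the absorption) all the powers on the right, uniformly as $\beta\to\infty$, and that the dual H\"older exponent attached to $f$ can be kept $<d$ throughout; handling the third-derivative term $g(\nabla(\Delta u),\nabla u)$ cleanly — so that replacing $\Delta u$ by the equation does not reintroduce $\nabla f$ or $\nabla B$ — is the other delicate point, resolved by integrating by parts before substituting the equation rather than after.
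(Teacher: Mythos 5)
Your overall skeleton (integral Bernstein via Bochner, good boundary sign from Lemma \ref{segno} and convexity, coercivity from $|D^2u|^2\geq\tfrac1d(\Delta u)^2$ plus the equation, Sobolev--H\"older--Young absorption with a power weight sent to infinity) is the paper's scheme, and your identification of $r\to\infty$, $q\to d^-$ is right. But the step you single out as "the other delicate point" is resolved the wrong way, and this is a genuine gap. If you integrate the whole term $\int_\Omega \varphi\, g(\nabla(\Delta u),\nabla u)\,\omega_{\mathrm{vol}}$ by parts before substituting the equation, you get $-\int_\Omega \Delta u\,\operatorname{div}(\varphi\nabla u)\,\omega_{\mathrm{vol}} = -\int_\Omega \Delta u\, g(\nabla\varphi,\nabla u)\,\omega_{\mathrm{vol}} - \int_\Omega \varphi\,(\Delta u)^2\,\omega_{\mathrm{vol}}$ (the boundary term vanishes by the Neumann condition). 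Tracking the sign with which this enters the rearranged Bochner identity, the error side of your inequality acquires $+\int_\Omega \varphi\,(\Delta u)^2\,\omega_{\mathrm{vol}}$ with coefficient exactly $1$, while the coercive side only carries $\tfrac1d\int_\Omega \varphi\,(\Delta u)^2\,\omega_{\mathrm{vol}}$ from \eqref{schw}. Since $\tfrac1d<1$, the term cannot be absorbed and the coercivity in $|\nabla u|^{2\gamma}$ is destroyed; the resulting inequality is vacuous. The paper does the opposite of what you propose: it first substitutes $\nabla(\Delta u)=\nabla(\tfrac1\gamma|\nabla u|^\gamma)+\nabla(g(B,\nabla u))-\nabla f$ inside the Bochner term (see \eqref{intEQ}) and only then integrates by parts each piece separately. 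The point is quantitative: the $|\nabla u|^\gamma$-piece is integrated against $\nabla(w^{p+1})$, which costs a factor $\tfrac1{p+1}$ on the resulting $|D^2u|$-contributions (estimate \eqref{est3}), absorbable for $p$ large — this is where $\bar r(d,\gamma)$ comes from — while the $f$- and $B$-pieces produce $|f|\,|\Delta u|$ and $|B|\,|\nabla u|\,|\Delta u|$ factors that are split by Young with small parameters $\tfrac16$ (estimates \eqref{est4}, \eqref{est6}) against the retained half of $\int|D^2u|^2w^p$ in \eqref{est2}. Your wholesale integration by parts gains no such smallness on the $(\Delta u)^2$ piece.

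Two smaller remarks. First, for your weight $h(t)=(1+t)^{1+\beta}$ with $\beta\geq0$ you have $h''\geq0$, so $z^{(2)}\geq0$, not $\leq0$ as you write; fortunately $z^{(2)}\geq0$ is exactly what lets you drop $z^{(2)}|D^2u(\nabla u)|^2$ in the direction you need, so only your justification is backwards, not the conclusion (the paper sidesteps this entirely by multiplying the plain identity \eqref{bw1} by $w^p$ rather than invoking Lemma \ref{lemboc}, which is reserved for Theorem \ref{main2}). Second, the exponent bookkeeping you defer is where the precise values $r=\tfrac{2(p+1)d}{d-2}$ and $q=\tfrac{2(p+1)d}{d+2p}$ come out; it does close, but only once the third-order term is handled as above.
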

As a byproduct of Theorem \ref{main1} one has for any $1 \leq r<\infty$ the existence of a constant $C>0$ such that 
$$
\|\nabla u\|_{L^r(\Omega,TM|_{\Omega})}\leq C\big(1+\|f\|_{L^d(\Omega,\mathbb{R})}\big) \,\, .
$$
This can be seen as nonlinear counterpart of \cite{CMjems} for the case $p=2$ and in the presence of lower-order terms with superlinear growth.
By exploiting the regularizing property of the equation we also have
\begin{equation} \label{boundgrad}
\text{ $f \in L^q(\Omega,\mathbb{R})$ for some $q>d$ } \quad \Longrightarrow \quad \nabla u \in L^\infty(\Omega,TM|_{\Omega}) \,\, .
\end{equation}
Indeed, if $f \in L^q(\Omega,\mathbb{R})$ for some $q>d$, then by \eqref{thesis1} it follows that $\nabla u \in L^r(\Omega,TM|_{\Omega})$ for all $r > \bar{r}(d,\gamma)$ and so, by using equation \eqref{hjn} together with condition \eqref{In2} and the H{\"o}lder Inequality, it follows that $\Delta u \in L^q(\Omega,\mathbb{R})$ as well. Therefore, the Calder\'on-Zygmund Inequality (see Proposition \ref{czineq}) and the Sobolev embedding $W^{1,q}(\Omega,TM|_{\Omega}) \hookrightarrow L^{\infty}(\Omega,TM|_{\Omega})$ (see \cite[Chapter 10]{HebeyBook}) allow to conclude the assertion \eqref{boundgrad}. We emphasize that the restriction $q>d$ is necessary to ensure $L^{\infty}$-gradient estimates even for linear equations in the Euclidean setting, see \cite{KMBull} and also \cite{CG2} for an explicit counterexample in the parabolic case via the fundamental solution of the heat equation. \smallskip

We now turn to maximal $L^q$-regularity properties for the equation \eqref{hjn}. Here, we assume
\begin{equation} \label{tildeIn2} \tag{$\widetilde{{\rm In}2}$}
B = 0
\end{equation}
instead of \eqref{In2} and we shall use the same scheme developed in \cite{CGell}. Nonetheless, compared to \cite{CGell}, here we face two additional difficulties. First, we treat the Neumann problem and hence handle boundary terms. Second, we have additional terms involving the Ricci curvature that can be absorbed via a suitable application of Young's inequalities. Our proof instead here builds upon a generalized Bochner's formula proved in Lemma \ref{lemboc}.

\begin{theorem} \label{main2}
Let $u \in \mathcal{C}^3(\Omega,\mathbb{R}) \cap \mathcal{C}^2(\overline{\Omega},\mathbb{R})$ be a solution of the Neumann problem \eqref{hjn} and assume that \eqref{D1}, \eqref{D2}, \eqref{In1}, \eqref{tildeIn2} hold true. Assume that there exist an exponent $q$ and a constant $K>0$ such that
$$
q > \max\big\{d\tfrac{\gamma-1}{\gamma},2\big\} \,\, , \quad
\|f\|_{L^q(\Omega,\mathbb{R})}+\|\nabla u\|_{L^1(\Omega,TM|_{\Omega})}\leq K \,\, .
$$
Then, there exists a constant $C=C(d,\gamma,\kappa,\rho,\sigma,q,K)>0$ such that
\begin{equation}
\|\Delta u\|_{L^q(\Omega,\mathbb{R})}+ \||\nabla u|^\gamma\|_{L^q(\Omega,\mathbb{R})}\leq C \,\, .
\end{equation}
\end{theorem}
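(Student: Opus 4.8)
\emph{Strategy and Step 1 (weighted Bernstein estimate on super-level sets).} The plan is to adapt the integral Bernstein scheme of \cite{CGell} to the Riemannian/Neumann setting: starting from the generalized Bochner identity \eqref{bw2}, I would derive a weighted curvature-dimension inequality, test it on the super-level sets $\Omega_k\coloneqq\{x\in\Omega:|\nabla u(x)|>k\}$, $k\geq1$, and close the resulting estimate by a continuity argument in the parameter $k$ rather than by absorption. Concretely, for fixed $k\geq1$ I choose a concave, non-decreasing $\mathcal{C}^2$ function $h=h_k$ adapted to the level $k$ (so that $h_k'$ vanishes on $\{w\leq\tfrac{k^2}2\}$) together with a power weight, as in \cite{CGell}, and integrate \eqref{bw2} against this weight over $\Omega$. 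The boundary term has the good sign $\leq0$ by Lemma \ref{segno}, since $\Omega\in\mathcal{O}^+$ by \eqref{D1}. Then I integrate the first-order term by parts using $g(\nabla(\Delta u),\nabla u)=\mathrm{div}((\Delta u)\nabla u)-(\Delta u)^2$: the Neumann condition $\partial_\nu u=0$ kills the arising boundary term, while the $h_k''$-contribution — which has a favourable sign because $h_k$ is concave — is paired, through the Cauchy-Schwarz bound $|D^2u(\nabla u)|^2\leq 2w\,|D^2u|^2$, with the $|D^2u|^2$-term of \eqref{bw2}. Using the Ricci lower bound from \eqref{D2} and inserting the equation \eqref{hjn} (with $B=0$) in the form $\Delta u=\tfrac1\gamma|\nabla u|^\gamma-f$, the coercivity mechanism of the integral Bernstein method (as in the heuristic $|D^2u|^2\geq\tfrac1d(\Delta u)^2\sim c_1|\nabla u|^{2\gamma}-c_2f^2$, cf.\ \cite{Lions85,ll,CGell}) together with Young's inequality — which also absorbs the lower-order term $\kappa|\nabla u|^2$ into $|\nabla u|^{2\gamma}$, since $2<2\gamma$ — yields an estimate of the form
\begin{equation*}
\int_{\Omega_k}(|\nabla u|-k)^{b}\big(|D^2u|^2+|\nabla u|^{2\gamma}\big)\,\omega_{\mathrm{vol}}\;\leq\;C\int_{\Omega_k}(|\nabla u|-k)^{b}\,\big(f^2+1\big)\,\omega_{\mathrm{vol}}\,\, ,
\end{equation*}
for the exponent $b=b(d,\gamma,q)$ fixed in the next step, with $C=C(d,\gamma,\kappa,b)$.

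\emph{Step 2 (Sobolev inequality and the unbalanced estimate).} Set $\theta\coloneqq\tfrac{\gamma q(d-2)}{2d}$ and $b\coloneqq2\theta-2$, and apply the Sobolev inequality \eqref{sob} — whose constant is $\leq\sigma$ by \eqref{D2} — to $v_k\coloneqq(|\nabla u|-k)_+^{\theta}$; since $|\nabla|\nabla u||=|D^2u(\nabla u)|/|\nabla u|\leq|D^2u|$ by \eqref{gradw}, one has $|\nabla v_k|^2\leq\theta^2(|\nabla u|-k)_+^{2\theta-2}|D^2u|^2$, so that
\begin{equation*}
y_k^{\frac{d-2}d}\;\leq\;C\Big(\textstyle\int_{\Omega_k}(|\nabla u|-k)^{2\theta-2}|D^2u|^2\,\omega_{\mathrm{vol}}+\int_{\Omega_k}(|\nabla u|-k)^{2\theta}\,\omega_{\mathrm{vol}}\Big)\,\, ,\qquad y_k\coloneqq\textstyle\int_{\Omega_k}(|\nabla u|-k)^{\gamma q}\,\omega_{\mathrm{vol}}\,\, .
\end{equation*}
I bound the first integral by Step 1; every power of $(|\nabla u|-k)$ that then appears has exponent $\leq\gamma q$, the binding one being $2\theta-2+2\gamma$ (arising from $(|\nabla u|-k)^{b}|\nabla u|^{2\gamma}$ on $\{|\nabla u|\geq2k\}$, where $|\nabla u|\leq2(|\nabla u|-k)$), which is $<\gamma q$ precisely when $q>d\tfrac{\gamma-1}{\gamma}$, while $\int_{\Omega_k}(|\nabla u|-k)^{b}f^2$ is handled by H\"older with $f^2\in L^{q/2}$ — licit since $q>2$ — producing the power $\tfrac{(2\theta-2)q}{q-2}<\gamma q$ under the same condition. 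Hence H\"older on $\Omega_k$ (using $\mathrm{vol}(\Omega)\leq\rho$) and Young's inequality give the ``unbalanced'' inequality
\begin{equation*}
y_k^{\frac{d-2}d}\;\leq\;y_k+\zeta\big(\mathrm{vol}(\Omega_k)\big)\,\, ,\qquad k\geq1\,\, ,
\end{equation*}
where $\zeta$ is continuous and non-decreasing with $\zeta(0^+)=0$ and coefficients depending only on $d,\gamma,\kappa,\rho,\sigma,q,K$; the hypothesis $\|\nabla u\|_{L^1(\Omega,TM|_\Omega)}\leq K$ enters here through $\mathrm{vol}(\Omega_k)\leq K/k\to0$ as $k\to+\infty$.

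\emph{Step 3 (continuity argument and conclusion).} Since $u\in\mathcal{C}^2(\overline\Omega,\mathbb{R})$ and $\overline\Omega$ is compact, $k\mapsto y_k$ is finite, non-increasing and continuous, and $y_k=0$ for $k>\|\nabla u\|_{L^\infty}$. Because $\tfrac{d-2}d<1$, there is a universal $M_\ast=M_\ast(d)>0$ such that, whenever $\zeta<M_\ast$, the set $\{t\geq0:t^{\frac{d-2}d}>t+\zeta\}$ is a non-empty interval $(a,A)\subset(0,1)$ with $a\to0^+$ as $\zeta\to0^+$; choosing $k_1=k_1(d,\gamma,\kappa,\rho,\sigma,q,K)$ so large that $\zeta(\mathrm{vol}(\Omega_k))<M_\ast$ for all $k\geq k_1$ and running the continuity argument of \cite{CGell} downward from $k>\|\nabla u\|_{L^\infty}$ (where $y_k=0$ lies in the lower branch), the continuous map $k\mapsto y_k$ cannot jump the forbidden interval, whence $y_{k_1}<1$. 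Splitting $\Omega$ into $\{|\nabla u|\leq2k_1\}$ and $\{|\nabla u|>2k_1\}$ and using $\mathrm{vol}(\Omega)\leq\rho$ then gives $\|\,|\nabla u|^\gamma\|_{L^q(\Omega,\mathbb{R})}\leq C(d,\gamma,\kappa,\rho,\sigma,q,K)$, and finally $\|\Delta u\|_{L^q(\Omega,\mathbb{R})}\leq\tfrac1\gamma\|\,|\nabla u|^\gamma\|_{L^q(\Omega,\mathbb{R})}+\|f\|_{L^q(\Omega,\mathbb{R})}\leq C$ by \eqref{hjn} (Proposition \ref{czineq} moreover bounds $\|D^2u\|_{L^q}$).

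The crux is in Steps 2--3: since $f$ lies only in $L^q$, the functional inequality one obtains is ``unbalanced'' ($\tfrac{d-2}d<1$), so $y_k$ cannot simply be absorbed and one is forced into the continuity bootstrap in $k$; carrying this out on a Riemannian domain with Neumann conditions requires simultaneously controlling the boundary through Lemma \ref{segno}, absorbing the Ricci term by Young's inequality, and — most delicately — keeping every constant independent of $\|\nabla u\|_{L^\infty}$, which is precisely what makes the statement a bona fide a priori estimate.
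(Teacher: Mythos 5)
Your overall architecture --- the generalized Bochner identity \eqref{bw2}, super-level sets of the gradient, the ``unbalanced'' inequality $y_k^{\frac{d-2}d}\leq y_k+\zeta(\mathrm{vol}(\Omega_k))$, Chebyshev via the $L^1$-bound on $\nabla u$, and the continuity argument in $k$ --- is exactly the paper's, and your Steps 2--3 (including the exponent bookkeeping, which does close precisely under $q>\max\{d\tfrac{\gamma-1}{\gamma},2\}$) reproduce its second half. The gap is in Step~1, which is the crux: the integration by parts you propose for the first-order Bochner term destroys the coercivity you then invoke. After multiplying \eqref{bw2} by a nonnegative weight and rearranging so that the retained $|D^2u|^2$-term and the $|\nabla z|^2$-term sit on the minorizing side, the term $g(\nabla(\Delta u),\nabla u)$ must be estimated on the majorizing side with a minus sign; with $\psi\coloneqq h_k'(w)\cdot(\text{power weight})\geq0$, your identity and Stokes (the boundary term indeed vanishes by Neumann) give
\[
-\int_\Omega g(\nabla(\Delta u),\nabla u)\,\psi\,\omega_{\mathrm{vol}}
=\int_\Omega(\Delta u)^2\,\psi\,\omega_{\mathrm{vol}}
+\int_\Omega(\Delta u)\,g(\nabla\psi,\nabla u)\,\omega_{\mathrm{vol}}\,\, ,
\]
i.e. $\int(\Delta u)^2\psi$ appears on the right with coefficient $1$. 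Your coercive term on the left comes from $|D^2u|^2\geq\tfrac1d(\Delta u)^2$ (in fact with a further small factor $\delta$ after absorbing the $h_k''$-contribution), so it carries coefficient at most $\tfrac1d$. The net coefficient of $\int(\Delta u)^2\psi$ is $\leq\tfrac1d-1<0$: inserting $\Delta u=\tfrac1\gamma|\nabla u|^\gamma-f$ leaves an uncontrolled $+c\int|\nabla u|^{2\gamma}\psi$ on the majorizing side, and the claimed Step-1 inequality does not follow. What your manipulation actually produces is the Reilly/Calder\'on--Zygmund direction ($\int|D^2u|^2\psi\lesssim\int(\Delta u)^2\psi+\dots$, as in the approach of Cianchi--Maz'ya recalled in the paper's final remark), which is the opposite of the Bernstein coercivity and is known to be insufficient in the presence of the superlinear first-order term.

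The repair is the paper's route: substitute the equation into $\nabla(\Delta u)$ \emph{first}, so that $g(\nabla(\Delta u),\nabla u)=|\nabla u|^{\gamma-2}g(\nabla w,\nabla u)-g(\nabla f,\nabla u)$. Only the piece $g(\nabla f,\nabla u)$ is integrated by parts (this is what removes the derivative from $f\in L^q$, producing terms in $f\,\Delta u$, $f\,h''(w)\,g(D^2u(\nabla u),\nabla u)$ and $f\,g(\nabla z,\nabla u)$ that are handled by H\"older and Young against the retained $\tfrac\delta2\int z^{(1)}|D^2u|^2z_k^\beta$ and $\beta\int z_k^{\beta-1}|\nabla z|^2$), while the Hamiltonian piece $|\nabla u|^{\gamma-2}g(\nabla w,\nabla u)$ is never converted into a $(\Delta u)^2$-term: it is estimated directly by Young, which only generates the power $|\nabla u|^{2(\gamma-1)}<|\nabla u|^{2\gamma}$ and is therefore absorbed by the coercive term. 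With Step~1 corrected in this way, your Steps 2 and 3 go through as written.
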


We remark that, by exploiting again the Calder\'on-Zygmund Inequality (see Proposition \ref{czineq}) together with a bootstrapping argument, one can even obtain from Theorem \ref{main2} estimates on $\|D^2 u\|_{L^q(\Omega,{\rm Sym}(TM,g)|_{\Omega})}$.

\subsection{Proof of Theorem \ref{main1}} \label{sect_proof1} \hfill \par

The proof is accomplished in several steps and auxiliary results to control the terms coming from the integral Bochner's identity, see \eqref{intEQ} below. \smallskip

Let $u \in \mathcal{C}^3(\Omega,\mathbb{R}) \cap \mathcal{C}^2(\overline{\Omega},\mathbb{R})$ be a solution to the Neumann problem \eqref{hjn} and set $w\coloneqq\tfrac12|\nabla u|^2$. By \eqref{bw1}, we obtain the following integral Bochner's identity for solutions to \eqref{hjn}
\begin{multline} \label{intEQ}
-\int_\Omega \Delta w\,w^p\,\omega_{\mathrm{vol}}
+\int_\Omega |D^2u|^2\,w^p\,\omega_{\mathrm{vol}} = \\
=
-\int_\Omega |\nabla u|^{\gamma-2} g(\nabla u,\nabla w)\,w^p\,\omega_{\mathrm{vol}}
-\int_\Omega g(\nabla (g(B,\nabla u)), \nabla u)\,w^p\,\omega_{\mathrm{vol}} \\
-\int_\Omega {\rm Ric}(\nabla u,\nabla u)\,w^p\,\omega_{\mathrm{vol}} 
+\int_\Omega g(\nabla f,\nabla u)\,w^p\,\omega_{\mathrm{vol}}
\end{multline}
for any $p\geq1$. We are going now to estimate each term appearing in the equation \eqref{intEQ}. We begin with the terms on the left hand side.

\begin{lemma}
There exists a constant $C_1=C_1(d,\gamma)>0$ such that
\begin{align}
\label{est1}
-\int_\Omega \Delta w\,w^p\,\omega_{\mathrm{vol}} &\geq
\tfrac{2p}{\sigma^2(p+1)^2} \left(\int_\Omega w^\frac{(p+1)d}{d-2}\,\omega_{\mathrm{vol}}\right)^{\frac{d-2}d} -\tfrac{4p}{(p+1)^2}{\rm vol}(\Omega)^{\frac{\gamma-1}{p+\gamma}}\left(\int_\Omega w^{p+\gamma}\,\omega_{\mathrm{vol}}\right)^{\frac{p+1}{p+\gamma}} \,\, , \\
\label{est2}
\int_\Omega |D^2u|^2\,w^p\,\omega_{\mathrm{vol}} &\geq
\tfrac12 \int_\Omega |D^2u|^2\,w^p\,\omega_{\mathrm{vol}}
+\tfrac{2^{\gamma}-1}{2\gamma^2d}\int_\Omega w^{p+\gamma}\,\omega_{\mathrm{vol}}
-C_1\left(\int_\Omega |B|^2\,w^{p+1}\,\omega_{\mathrm{vol}}
+\int_\Omega f^2\,w^p\,\omega_{\mathrm{vol}}\right) \,\, .
\end{align}
\end{lemma}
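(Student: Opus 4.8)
\emph{Plan for \eqref{est1}.} I would integrate by parts via the formula \eqref{intpart}, applied to the vector field $X=\nabla w$ tested against $w^p$; this is legitimate since $u\in\mathcal{C}^3(\Omega)\cap\mathcal{C}^2(\overline\Omega)$ forces $w\in\mathcal{C}^2(\Omega)\cap\mathcal{C}^1(\overline\Omega)$ and, by \eqref{gradw}, $\nabla w=D^2u(\nabla u)\in\mathcal{C}^1(\Omega)\cap\mathcal{C}^0(\overline\Omega)$, while $w^p\in\mathcal{C}^1(\Omega)\cap\mathcal{C}^0(\overline\Omega)$ for $p\ge1$. This yields
\[
-\int_\Omega\Delta w\,w^p\,\omega_{\mathrm{vol}}=p\int_\Omega w^{p-1}|\nabla w|^2\,\omega_{\mathrm{vol}}-\int_{\partial\Omega}w^p\,\partial_\nu w\,\imath_{\partial\Omega}{}^*(\nu\lrcorner\omega_{\mathrm{vol}}) .
\]
Since $\Omega\in\mathcal{O}^+$ by \eqref{D1}, Lemma \ref{segno} gives $\partial_\nu w=-\mathrm{II}(\nabla u,\nabla u)\le0$, so the boundary integral is nonpositive and may be dropped. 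Next I would write $w^{p-1}|\nabla w|^2=\tfrac{4}{(p+1)^2}|\nabla(w^{(p+1)/2})|^2$ and apply the Sobolev inequality \eqref{sob} to $v\coloneqq w^{(p+1)/2}\in W^{1,2}(\Omega,\mathbb{R})$ in the squared form $\|\nabla v\|_{L^2}^2\ge\tfrac{1}{2\sigma^2}\|v\|_{L^{2d/(d-2)}}^2-\|v\|_{L^2}^2$, using $C_{\mathrm{Sob}}(\Omega)\le\sigma$ from \eqref{D2}. Since $\|v\|_{L^{2d/(d-2)}}^2=\big(\int_\Omega w^{(p+1)d/(d-2)}\,\omega_{\mathrm{vol}}\big)^{(d-2)/d}$ and $\|v\|_{L^2}^2=\int_\Omega w^{p+1}\,\omega_{\mathrm{vol}}$, this already produces the first term of \eqref{est1} up to the error $-\tfrac{4p}{(p+1)^2}\int_\Omega w^{p+1}\,\omega_{\mathrm{vol}}$; a final H\"older inequality with conjugate exponents $\tfrac{p+\gamma}{p+1}$ and $\tfrac{p+\gamma}{\gamma-1}$ (legitimate since $\gamma>1$) bounds $\int_\Omega w^{p+1}\,\omega_{\mathrm{vol}}\le\mathrm{vol}(\Omega)^{(\gamma-1)/(p+\gamma)}\big(\int_\Omega w^{p+\gamma}\,\omega_{\mathrm{vol}}\big)^{(p+1)/(p+\gamma)}$, which is exactly the remaining term in \eqref{est1}.

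\emph{Plan for \eqref{est2}.} The idea is to keep one half of the Hessian term and extract coercivity from the other half using the equation. I would start from the trace--Cauchy--Schwarz bound $|D^2u|^2\ge\tfrac1d(\Delta u)^2$ and substitute $\Delta u=\tfrac1\gamma|\nabla u|^\gamma+g(B,\nabla u)-f$ from \eqref{hjn}. Using the elementary inequality $(a+b)^2\ge(1-\varepsilon)a^2-\varepsilon^{-1}b^2$ with $a=\tfrac1\gamma|\nabla u|^\gamma$, $b=g(B,\nabla u)-f$ and the sharp choice $\varepsilon=2^{-\gamma}$, so that $(1-\varepsilon)\tfrac{2^\gamma}{\gamma^2}=\tfrac{2^\gamma-1}{\gamma^2}$ (recall $|\nabla u|^{2\gamma}=2^\gamma w^\gamma$), together with $g(B,\nabla u)^2\le|B|^2|\nabla u|^2=2|B|^2w$, one arrives at the pointwise bound $(\Delta u)^2\ge\tfrac{2^\gamma-1}{\gamma^2}w^\gamma-2^{\gamma+2}|B|^2w-2^{\gamma+1}f^2$. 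Multiplying by $\tfrac1{2d}w^p$, integrating over $\Omega$, and using $\tfrac12\int_\Omega|D^2u|^2w^p\,\omega_{\mathrm{vol}}\ge\tfrac1{2d}\int_\Omega(\Delta u)^2w^p\,\omega_{\mathrm{vol}}$ then yields \eqref{est2} with $C_1=2^{\gamma+1}/d$.

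\emph{Main difficulty.} Neither estimate is deep; the points that will need attention are the correct sign of the boundary term --- which is exactly where the geometric hypothesis $\Omega\in\mathcal{O}^+$ ($\mathrm{II}\ge0$) enters, through Lemma \ref{segno} --- and the bookkeeping of constants in \eqref{est2}: one must retain precisely $\tfrac12|D^2u|^2w^p$ and pick $\varepsilon=2^{-\gamma}$ in Young's inequality so that the coercive power $w^{p+\gamma}$ appears with the declared constant $\tfrac{2^\gamma-1}{2\gamma^2d}$, while all leftover terms are absorbed into the $|B|^2w^{p+1}$ and $f^2w^p$ contributions with a constant depending only on $d$ and $\gamma$.
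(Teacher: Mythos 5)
Your proposal is correct and follows essentially the same route as the paper: integration by parts plus Lemma \ref{segno} and the squared Sobolev inequality \eqref{sob} followed by H\"older for \eqref{est1}, and the trace inequality $|D^2u|^2\ge\tfrac1d(\Delta u)^2$ combined with the equation, keeping half of the Hessian term, for \eqref{est2}. The only cosmetic difference is that in \eqref{est2} you apply a weighted Young inequality directly to $(a+b)^2$ with $\varepsilon=2^{-\gamma}$, whereas the paper first uses $(a+b-c)^2\ge a^2-2a(|b|+|c|)$ and then absorbs the two cross terms separately by Young; both yield the identical constant $\tfrac{2^{\gamma}-1}{2\gamma^2 d}$ in front of $\int_\Omega w^{p+\gamma}\,\omega_{\mathrm{vol}}$.
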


\begin{proof}
First, by using the integration by parts formula \eqref{intpart}, Lemma \ref{segno} and the chain rule, we obtain $$\begin{aligned}
-\int_\Omega \Delta w\,w^p\,\omega_{\mathrm{vol}} &= -\int_\Omega \mathrm{div}(\nabla w)\,w^p\,\omega_{\mathrm{vol}} \\ 
&=  \int_\Omega g(\nabla w, \nabla(w^p))\,\omega_{\mathrm{vol}} - \int_{\partial \Omega} w^p\,g(\nabla w, \nu) \, \imath_{\partial\Omega}{}^*(\nu \!\lrcorner \omega_{\mathrm{vol}}) \\
&\geq p\int_\Omega w^{p-1}|\nabla w|^2\,\omega_{\mathrm{vol}} \\
&= \tfrac{4p}{(p+1)^2}\int_\Omega \big|\nabla w^{\frac{p+1}2}\big|^2\,\omega_{\mathrm{vol}} \,\, .
\end{aligned}$$
By the Sobolev's inequality \eqref{sob} applied to the function $w^{\frac{p+1}2}$, we get
$$
\tfrac1{2\sigma^2}\left(\int_\Omega w^\frac{(p+1)d}{d-2}\,\omega_{\mathrm{vol}}\right)^{\frac{d-2}d} \leq \int_\Omega \big|\nabla w^{\frac{p+1}2}\big|^2\,\omega_{\mathrm{vol}}+ \int_\Omega w^{p+1}\,\omega_{\mathrm{vol}}
$$
and so
\begin{equation} \label{est1'}
-\int_\Omega \Delta w\,w^p\,\omega_{\mathrm{vol}} \geq \tfrac{2p}{\sigma^2(p+1)^2} \left(\int_\Omega w^\frac{(p+1)d}{d-2}\,\omega_{\mathrm{vol}}\right)^{\frac{d-2}d} -\tfrac{4p}{(p+1)^2}\int_\Omega w^{p+1}\,\omega_{\mathrm{vol}} \,\, .
\end{equation}
Since ${\rm vol}(\Omega)<\infty$ and $\gamma>1$, it holds that
\begin{equation} \label{eqLpincl}
\int_\Omega w^{p+1}\,\omega_{\mathrm{vol}} \leq {\rm vol}(\Omega)^{\frac{\gamma-1}{p+\gamma}}\left(\int_\Omega w^{p+\gamma}\,\omega_{\mathrm{vol}}\right)^{\frac{p+1}{p+\gamma}}
\end{equation}
and so \eqref{est1} follows from \eqref{est1'} and \eqref{eqLpincl}.

We now handle the second estimate. By the Schwarz inequality, it is easy to see that
\begin{equation} \label{schw}
|D^2u|^2 \geq \tfrac1d (\Delta u)^2 \,\, .
\end{equation}
This observation allows to gain additional coercivity from the equation, and get
$$
|D^2u|^2 \geq \tfrac1d \big(\tfrac1{\gamma}|\nabla u|^\gamma +g(B,\nabla u)-f \big)^2 \,\, .
$$
Moreover, we recall that simple algebraic calculations lead to
\begin{equation} \label{ab}
(a+b-c)^2\geq a^2 -2a(|b|+|c|) \quad \text{ for any $a,b,c \in \mathbb{R}$ with $a\geq0$} \,\, .
\end{equation}
Then, by using \eqref{schw}, \eqref{ab} and by plugging the Hamilton-Jacobi equation \eqref{hjn} we obtain
\begin{equation} \label{est2'} \begin{aligned}
\int_\Omega |D^2u|^2\,w^p\,\omega_{\mathrm{vol}} &\geq \tfrac12\int_\Omega |D^2u|^2\,w^p\,\omega_{\mathrm{vol}} +\tfrac1{2d}\int_\Omega (\Delta u)^2\,w^p\,\omega_{\mathrm{vol}} \\
&\geq \tfrac12\int_\Omega |D^2u|^2\,w^p\,\omega_{\mathrm{vol}}
+\tfrac1{2\gamma^2d}\int_\Omega |\nabla u|^{2\gamma}\,w^p\,\omega_{\mathrm{vol}} -\tfrac1{\gamma d}\int_\Omega |B|\,|\nabla u|^{\gamma+1}w^p\,\omega_{\mathrm{vol}} \\
&\qquad \qquad -\tfrac1{\gamma d}\int_\Omega |f|\,|\nabla u|^{\gamma}w^p\,\omega_{\mathrm{vol}} \\
&= \tfrac12\int_\Omega |D^2u|^2\,w^p\,\omega_{\mathrm{vol}} +\tfrac{2^{\gamma-1}}{\gamma^2d}\int_\Omega w^{p+\gamma}\,\omega_{\mathrm{vol}}
-\tfrac{2^{\frac{\gamma+1}2}}{\gamma d} \int_\Omega |B|\,w^{p+\frac{\gamma+1}2}\,\omega_{\mathrm{vol}} \\
&\qquad \qquad -\tfrac{2^{\frac\gamma2}}{\gamma d} \int_\Omega |f|\,w^{p+\frac{\gamma}2}\,\omega_{\mathrm{vol}} \,\, .
\end{aligned} \end{equation}
By the generalized Young's inequality, it follows that there exist $C'_1,\tilde C''_1>0$ depending on $d,\gamma$ such that
\begin{equation} \label{est2''} \begin{gathered}
\tfrac{2^{\frac{\gamma+1}2}}{\gamma d} \int_\Omega |B|\,w^{p+\frac{\gamma+1}2}\,\omega_{\mathrm{vol}} \leq
\tfrac{1}{4\gamma^2d}\int_\Omega w^{p+\gamma}\,\omega_{\mathrm{vol}}
+ C'_1 \int_\Omega |B|^2\,w^{p+1}\,\omega_{\mathrm{vol}} \,\, , \\
\tfrac{2^{\frac\gamma2}}{\gamma d} \int_\Omega |f|\,w^{p+\frac{\gamma}2}\,\omega_{\mathrm{vol}} \leq
\tfrac{1}{4\gamma^2d}\int_\Omega w^{p+\gamma}\,\omega_{\mathrm{vol}}
+ C''_1 \int_\Omega f^2\,w^{p}\,\omega_{\mathrm{vol}}
\end{gathered} \end{equation}
and so \eqref{est2} follows from \eqref{est2'} and \eqref{est2''} by taking $C_1=\max\{ C'_1, C''_1\}$.
\end{proof}

We now estimate the first-order nonlinear terms. More precisely

\begin{lemma}
There exist constants $\tilde{C}=\tilde{C}(d,\gamma)>0$ and $C_2=C_2(d,p)>0$ such that
\begin{align}
\label{est3}
-\int_\Omega |\nabla u|^{\gamma-2} g(\nabla u,\nabla w)\,w^p\,\omega_{\mathrm{vol}} &\leq
\frac{\tilde{C}}{p+1} \bigg(\int_\Omega |D^2u|^2w^p\,\omega_{\mathrm{vol}}
+\int_\Omega w^{p+\gamma}\,\omega_{\mathrm{vol}}\bigg) \,\, , \\
\label{est4}
-\int_\Omega g(\nabla (g(B,\nabla u)), \nabla u)\,w^p\,\omega_{\mathrm{vol}} &\leq
\tfrac16 \int_\Omega |D^2u|^2w^p\,\omega_{\mathrm{vol}}
+C_2\int_\Omega |B|^2w^{p+1}\,\omega_{\mathrm{vol}} \,\, .
\end{align}
\end{lemma}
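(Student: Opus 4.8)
The overall strategy for both bounds is the same: integrate by parts to shift the derivative off the worst factor, use the Neumann condition $\partial_\nu u=0$ (equivalently, $\nabla u$ is tangent to $\partial\Omega$, so that every boundary integral produced by \eqref{intpart} with $X$ a multiple of $\nabla u$ vanishes), and then close with the Cauchy-Schwarz inequality $|\Delta u|\le\sqrt d\,|D^2u|$ from \eqref{schw}, the identity $\nabla w=D^2u(\nabla u)$ from \eqref{gradw}, and Young's inequality.

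For \eqref{est3}, I would rewrite $|\nabla u|^{\gamma-2}=(2w)^{(\gamma-2)/2}$ and collect the powers of $w$ via $w^{p+(\gamma-2)/2}\,\nabla w=\tfrac{2}{2p+\gamma}\,\nabla\!\big(w^{(2p+\gamma)/2}\big)$. Applying \eqref{intpart} against the vector field $\nabla u$ (so the boundary term disappears and $\mathrm{div}(\nabla u)=\Delta u$), the left-hand side of \eqref{est3} equals $\tfrac{2^{\gamma/2}}{2p+\gamma}\int_\Omega w^{(2p+\gamma)/2}\,\Delta u\,\omega_{\mathrm{vol}}$. I then bound $|\Delta u|\le\sqrt d\,|D^2u|$ and split $w^{(2p+\gamma)/2}|D^2u|=(w^{p/2}|D^2u|)\cdot w^{(p+\gamma)/2}$ by Young's inequality, which produces $\tfrac12\int_\Omega w^p|D^2u|^2\,\omega_{\mathrm{vol}}+\tfrac12\int_\Omega w^{p+\gamma}\,\omega_{\mathrm{vol}}$ with prefactor $\tfrac{2^{\gamma/2-1}\sqrt d}{2p+\gamma}$; since $2p+\gamma>p+1$ for $p\ge1$, $\gamma>1$, this gives \eqref{est3} with $\tilde C=2^{\gamma/2-1}\sqrt d$.

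For \eqref{est4}, set $\phi\coloneqq g(B,\nabla u)$ and write $g(\nabla\phi,\nabla u)\,w^p=g(\nabla\phi,\,w^p\nabla u)$. Applying \eqref{intpart} with the vector field $w^p\nabla u$ (whose boundary flux again vanishes, as $w^p\nabla u$ is tangent to $\partial\Omega$) and using $\mathrm{div}(w^p\nabla u)=w^p\Delta u+p\,w^{p-1}g(\nabla w,\nabla u)$, the left-hand side of \eqref{est4} becomes $\int_\Omega\phi\,w^p\Delta u\,\omega_{\mathrm{vol}}+p\int_\Omega\phi\,w^{p-1}g(\nabla w,\nabla u)\,\omega_{\mathrm{vol}}$. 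Using $|\phi|\le|B|\sqrt{2w}$, $|\Delta u|\le\sqrt d\,|D^2u|$ and $|g(\nabla w,\nabla u)|\le|\nabla w|\,|\nabla u|\le 2w\,|D^2u|$, both integrands are dominated by a constant multiple of $|B|\,w^{p+1/2}|D^2u|=(w^{p/2}|D^2u|)\cdot(|B|w^{(p+1)/2})$, and two applications of Young's inequality with a small parameter give the claimed bound; the factor $p$ entering the second term is absorbed into $C_2=C_2(d,p)$, while the coefficient $\tfrac16$ in front of the Hessian term is kept independent of $p$ by choosing the Young parameters $\propto 1/p$.

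I expect the only genuinely delicate point to be the decay $\tfrac1{p+1}$ in \eqref{est3}: it is this decay that will allow, in the subsequent absorption step, the Hessian contribution of \eqref{est3} to be swallowed by the good term $\tfrac12\int_\Omega|D^2u|^2w^p\,\omega_{\mathrm{vol}}$ of \eqref{est2} uniformly in $p$, and it is produced precisely by the integration by parts above---a pointwise estimate of the same integrand would yield only a $p$-independent constant and break the scheme. The remaining care is routine bookkeeping of constants (to land exactly on $\tfrac16$) and the immediate check that all boundary integrals vanish thanks to $\partial_\nu u=0$.
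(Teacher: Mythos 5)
Your argument is correct and follows essentially the same route as the paper: for both estimates you integrate by parts via \eqref{intpart}, kill the boundary flux using $\partial_\nu u=0$, and close with $|\Delta u|\le\sqrt d\,|D^2u|$, $|\nabla w|\le |D^2u|\,|\nabla u|$ and Young's inequality; your treatment of \eqref{est4} coincides with the paper's almost verbatim. The only (harmless, in fact slightly cleaner) deviation is in \eqref{est3}, where you absorb $|\nabla u|^{\gamma-2}=(2w)^{(\gamma-2)/2}$ into the exact gradient $\nabla\big(w^{(2p+\gamma)/2}\big)$ and differentiate only $\nabla u$, whereas the paper integrates by parts against the field $|\nabla u|^{\gamma-2}\nabla u$ and must therefore also estimate $g(\nabla|\nabla u|^{\gamma-2},\nabla u)$, ending with the larger constant $2^{\gamma/2-1}(|\gamma-2|+\sqrt d)$; both routes produce the crucial $O(1/p)$ prefactor, and your closing remark that this decay is what makes the subsequent absorption work uniformly in $p$ is exactly right.
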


\begin{proof}
By using again the integration by parts formula \eqref{intpart}, the Young's inequality and the Neumann boundary condition, we obtain
\begin{equation} \label{est4'} \begin{aligned}
\int_\Omega |\nabla u|^{\gamma-2} g(\nabla u,\nabla w)\,w^p\,\omega_{\mathrm{vol}}
&= \tfrac1{p+1} \int_\Omega g\big(|\nabla u|^{\gamma-2}\nabla u ,\nabla(w^{p+1})\big)\,\omega_{\mathrm{vol}}\\
&= -\tfrac1{p+1} \int_\Omega {\rm div}(|\nabla u|^{\gamma-2}\nabla u)\,w^{p+1}\,\omega_{\mathrm{vol}}-\tfrac1{p+1} \int_\Omega |\nabla u|^{\gamma-2}w^{p+1}\partial_\nu u\, \imath_{\partial\Omega}{}^*(\nu \!\lrcorner \omega_{\mathrm{vol}}) \\
&= -\tfrac1{p+1} \int_\Omega g(\nabla|\nabla u|^{\gamma-2},\nabla u)\,w^{p+1}\,\omega_{\mathrm{vol}} -\tfrac1{p+1}\int_\Omega |\nabla u|^{\gamma-2}\Delta u\,w^{p+1}\,\omega_{\mathrm{vol}} \,\, .
\end{aligned} \end{equation}
Then, by \eqref{schw}, \eqref{est4'} and the Young inequality, we obtain
$$\begin{aligned}
-\int_\Omega |\nabla u|^{\gamma-2} g(\nabla u,\nabla w)\,w^p\,\omega_{\mathrm{vol}} &\leq \tfrac{|\gamma-2|}{p+1} \int_\Omega |D^2u||\nabla u|^{\gamma-2}w^{p+1}\,\omega_{\mathrm{vol}} +\tfrac{\sqrt{d}}{p+1}\int_\Omega |D^2u||\nabla u|^{\gamma-2}w^{p+1}\,\omega_{\mathrm{vol}} \\
&= 2^{\frac{\gamma}2-1}\tfrac{|\gamma-2|+\sqrt{d}}{p+1} \int_{\Omega}|D^2u|w^{\frac{p}{2}}w^{\frac{p+\gamma}2}\,\omega_{\mathrm{vol}} \\
&\leq \frac{\tilde{C}}{p+1} \bigg(\int_\Omega |D^2u|^2w^p\,\omega_{\mathrm{vol}}
+\int_\Omega w^{p+\gamma}\,\omega_{\mathrm{vol}}\bigg)
\end{aligned}$$
for some $\tilde{C}=\tilde{C}(d,\gamma)>0$ and so we get \eqref{est3}.

We now handle the second estimate. From \eqref{gradw}, \eqref{schw}, the integration by parts formula \eqref{intpart}, the Young inequality and the Neumann boundary condition, we get
$$\begin{aligned}
-\int_\Omega g(\nabla (g(B,\nabla u)), \nabla u)\,w^p\,\omega_{\mathrm{vol}} &= \int_\Omega g(B,\nabla u)\,{\rm div}(w^p \nabla u)\,\omega_{\mathrm{vol}} -\int_{\partial\Omega} g(B,\nabla u)\,w^p\partial_\nu u \, \imath_{\partial\Omega}{}^*(\nu \lrcorner \omega_{\mathrm{vol}}) \\
&= p\int_\Omega g(B,\nabla u)\,g(\nabla w,\nabla u)\,w^{p-1}\,\omega_{\mathrm{vol}} +\int_\Omega g(B,\nabla u)\,\Delta u\,w^p\,\omega_{\mathrm{vol}} \\
&\leq p\int_\Omega |B|\,|\nabla u|^3\,|D^2u|\,w^{p-1}\,\omega_{\mathrm{vol}} +\int_\Omega |B|\,|\nabla u|\,|\Delta u|\,w^p\,\omega_{\mathrm{vol}} \\
&\leq \big(2^{\frac32}p+\sqrt{2d}\big)\int_\Omega |B|\,w^{\frac{p+1}2}\,|D^2u|\,w^{\frac{p}2}\,\omega_{\mathrm{vol}} \\
&\leq \tfrac16 \int_\Omega |D^2u|^2w^p\,\omega_{\mathrm{vol}}+ C_2 \int_\Omega |B|^2w^{p+1}\,\omega_{\mathrm{vol}}
\end{aligned}$$
for some $C_2=C_2(d,p)>0$. Hence, \eqref{est4} follows.
\end{proof}

We estimate now the last terms.

\begin{lemma}
There exists a constant $C_3=C_3(d,p)>0$ such that
\begin{gather}
\label{est5}
-\int_\Omega \mathrm{Ric}(\nabla u,\nabla u)\,w^p\,\omega_{\mathrm{vol}} \leq
2\kappa {\rm vol}(\Omega)^{\frac{\gamma-1}{p+\gamma}} \bigg(\int_\Omega w^{p+\gamma}\,\omega_{\mathrm{vol}}\bigg)^{\frac{p+1}{p+\gamma}} \,\, , \\
\label{est6}
-\int_\Omega g(\nabla f,\nabla u)\,w^p\,\omega_{\mathrm{vol}} \leq \tfrac16\int_\Omega |D^2u|^2\,w^p\,\omega_{\mathrm{vol}} +C_3 \int_\Omega f^2\,w^p\,\omega_{\mathrm{vol}} \,\, . 
\end{gather}
\end{lemma}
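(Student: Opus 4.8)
The plan is to handle the two inequalities separately: \eqref{est5} is purely pointwise followed by one H\"older step, while \eqref{est6} requires a single integration by parts to shift the gradient off $f$.

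For \eqref{est5} I would use nothing more than the lower Ricci bound built into \eqref{D2}: pointwise $-\mathrm{Ric}(\nabla u,\nabla u)\leq\kappa|\nabla u|^2=2\kappa w$, hence $-\int_\Omega\mathrm{Ric}(\nabla u,\nabla u)\,w^p\,\omega_{\mathrm{vol}}\leq 2\kappa\int_\Omega w^{p+1}\,\omega_{\mathrm{vol}}$. Since $\mathrm{vol}(\Omega)<\infty$ and $\gamma>1$, applying H\"older's inequality exactly as in \eqref{eqLpincl} gives $\int_\Omega w^{p+1}\,\omega_{\mathrm{vol}}\leq\mathrm{vol}(\Omega)^{\frac{\gamma-1}{p+\gamma}}\big(\int_\Omega w^{p+\gamma}\,\omega_{\mathrm{vol}}\big)^{\frac{p+1}{p+\gamma}}$, which is \eqref{est5}.

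For \eqref{est6} I would write $g(\nabla f,\nabla u)\,w^p=g(\nabla f,w^p\nabla u)$ and apply the integration by parts formula \eqref{intpart} with vector field $X=w^p\nabla u$; the boundary integrand carries the factor $g(w^p\nabla u,\nu)=w^p\,\partial_\nu u$, which vanishes by the Neumann condition, so $-\int_\Omega g(\nabla f,\nabla u)\,w^p\,\omega_{\mathrm{vol}}=\int_\Omega f\,\mathrm{div}(w^p\nabla u)\,\omega_{\mathrm{vol}}=\int_\Omega f\big(w^p\Delta u+p\,w^{p-1}g(\nabla w,\nabla u)\big)\,\omega_{\mathrm{vol}}$. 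Then I would bound $|\Delta u|\leq\sqrt d\,|D^2u|$ via \eqref{schw}, and rewrite $g(\nabla w,\nabla u)=g(D^2u(\nabla u),\nabla u)$ via \eqref{gradw}, which is $\leq|D^2u|\,|\nabla u|^2=2w\,|D^2u|$ in absolute value; this leads to $\big|\int_\Omega f\,\mathrm{div}(w^p\nabla u)\,\omega_{\mathrm{vol}}\big|\leq(\sqrt d+2p)\int_\Omega\big(|f|\,w^{p/2}\big)\big(|D^2u|\,w^{p/2}\big)\,\omega_{\mathrm{vol}}$. A final Young's inequality $ab\leq\tfrac16 a^2+\tfrac32 b^2$ applied with $a=|D^2u|\,w^{p/2}$ and $b=(\sqrt d+2p)|f|\,w^{p/2}$ yields \eqref{est6} with $C_3=\tfrac32(\sqrt d+2p)^2$, which depends only on $d$ and $p$.

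There is no serious obstacle in this lemma; it is the most elementary of the series. The only point demanding a little care is that differentiating $w^p$ inside $\mathrm{div}(w^p\nabla u)$ produces the extra term $p\,w^{p-1}g(\nabla w,\nabla u)$, and one must observe, using \eqref{gradw}, that this is again controlled by $|D^2u|\,w^p$ rather than by a genuinely new quantity, so it can be absorbed together with the $w^p\Delta u$ term into $\tfrac16\int_\Omega|D^2u|^2 w^p\,\omega_{\mathrm{vol}}$. This is precisely why $C_3$ is permitted to depend on $p$.
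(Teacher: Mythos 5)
Your proof is correct and follows the same route as the paper: the Ricci lower bound plus the H\"older step \eqref{eqLpincl} for \eqref{est5}, and integration by parts with $X=w^p\nabla u$, the bounds $|\Delta u|\leq\sqrt d\,|D^2u|$ and $|g(\nabla(w^p),\nabla u)|\leq 2p\,w^p|D^2u|$, and Young's inequality for \eqref{est6}. The explicit constant $C_3=\tfrac32(\sqrt d+2p)^2$ is consistent with the paper's unspecified $C_3(d,p)$.
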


\begin{proof}
By the lower bound on the Ricci tensor, we get
\begin{equation} \label{est5'}
-\int_\Omega \mathrm{Ric}(\nabla u,\nabla u)\,w^p\,\omega_{\mathrm{vol}} \leq 2\kappa \int_\Omega w^{p+1}\,\omega_{\mathrm{vol}}
\end{equation}
and so \eqref{est5} follows from \eqref{est5'} and \eqref{eqLpincl}.

We now handle the second estimate. By using integration by parts formula \eqref{intpart} and the Neumann boundary condition, it follows that
\begin{equation} \label{est3'} \begin{aligned}
-\int_\Omega g(\nabla f,\nabla u)\,w^p\,\omega_{\mathrm{vol}} &= \int_\Omega f\, \mathrm{div}(w^p\nabla u)\,\omega_{\mathrm{vol}}-\int_{\partial\Omega} f\,w^p\partial_\nu u\,\imath_{\partial\Omega}{}^*(\nu \lrcorner \omega_{\mathrm{vol}}) \\
&= \int_\Omega f\,g(\nabla(w^p), \nabla u)\,\omega_{\mathrm{vol}} +\int_\Omega f\,\Delta u\,w^p\,\omega_{\mathrm{vol}} \,\, .
\end{aligned} \end{equation}
Since a direct computation shows that
\begin{equation} \label{est3''}
\big|g(\nabla (w^p), \nabla u)\big| \leq 2p w^p |D^2u| \,\, ,
\end{equation}
from \eqref{schw}, \eqref{est3'}, \eqref{est3''} and the Young inequality, we get
$$\begin{aligned}
-\int_\Omega g(\nabla f,\nabla u)\,w^p\,\omega_{\mathrm{vol}} &\leq
\int_\Omega |f|\,\big|g(\nabla (w^p), \nabla u)\big|\,\omega_{\mathrm{vol}} +\int_\Omega |f|\,|\Delta u|\,w^p\,\omega_{\mathrm{vol}} \\
&\leq \big(\sqrt{d}+2p\big)\int_\Omega |D^2u|\,|f|\,w^p \omega_{\mathrm{vol}} \\
&\leq \tfrac16\int_\Omega |D^2u|^2\,w^p\,\omega_{\mathrm{vol}} +C_3 \int_\Omega f^2\,w^p\,\omega_{\mathrm{vol}} \,\, .
\end{aligned}$$
for some $C_3=C_3(d,p)>0$. Hence, \eqref{est6} follows.
\end{proof}

The last lemma that we need is the following

\begin{lemma}
Let $s>d$ and $\theta>0$ as in \eqref{In2}. Then, there exists $C_4=C_4(d,\gamma,\rho,\sigma,p,s)>0$ such that
\begin{equation} \label{estB}
C_2\int_\Omega |B|^2\,w^{p+1}\,\omega_{\mathrm{vol}}
\leq
\tfrac{p}{\sigma^2(p+1)^2}\left(\int_\Omega w^{(p+1)\frac{d}{d-2}}\,\omega_{\mathrm{vol}}\right)^{\frac{d-2}d} +\theta^{\frac{2s}{s-d}}C_4 \left(\int_\Omega w^{p+\gamma}\,\omega_{\mathrm{vol}}\right)^{\frac{p+1}{p+\gamma}} \,\, .
\end{equation}
\end{lemma}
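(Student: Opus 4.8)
The plan is to bound $\int_\Omega |B|^2 w^{p+1}\,\omega_{\mathrm{vol}}$ by a Hölder--interpolation--Young chain, calibrated so that the term carrying the top Sobolev exponent $(p+1)\tfrac{d}{d-2}$ is produced with coefficient \emph{exactly} $\tfrac{p}{\sigma^2(p+1)^2}$, which is precisely what is needed to later absorb it against the good term in \eqref{est1} in the proof of Theorem \ref{main1}.

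First I would separate $|B|^2$ via the Hölder inequality with conjugate exponents $\tfrac s2$ and $\tfrac s{s-2}$, using that $|B|^2 \in L^{s/2}(\Omega,\mathbb{R})$ by \eqref{In2}:
\[
\int_\Omega |B|^2 w^{p+1}\,\omega_{\mathrm{vol}} \leq \|B\|_{L^s(\Omega,\mathbb{R})}^2\left(\int_\Omega w^{q_0}\,\omega_{\mathrm{vol}}\right)^{\frac{s-2}{s}} = \|B\|_{L^s(\Omega,\mathbb{R})}^2\,\|w\|_{L^{q_0}(\Omega,\mathbb{R})}^{p+1}\,\, , \qquad q_0 \coloneqq (p+1)\tfrac{s}{s-2}\,\, .
\]
Since $s>d$ (and $s>d\geq 3>2$), one checks $p+1 < q_0 < (p+1)\tfrac{d}{d-2}$, hence the standard interpolation inequality for Lebesgue norms yields $\|w\|_{L^{q_0}} \leq \|w\|_{L^{p+1}}^{1-\lambda}\,\|w\|_{L^{(p+1)d/(d-2)}}^{\lambda}$, where $\lambda$ is forced by $\tfrac{p+1}{q_0} = (1-\lambda)+\lambda\tfrac{d-2}{d}$, i.e. $\lambda = \tfrac ds \in (0,1)$.

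Next I would insert the lower-order control. By \eqref{eqLpincl} together with $\mathrm{vol}(\Omega)\leq\rho$ from \eqref{D2} we have $\int_\Omega w^{p+1}\,\omega_{\mathrm{vol}} \leq \rho^{\frac{\gamma-1}{p+\gamma}}\big(\int_\Omega w^{p+\gamma}\,\omega_{\mathrm{vol}}\big)^{\frac{p+1}{p+\gamma}}$, while $\|w\|_{L^{(p+1)d/(d-2)}}^{p+1} = \big(\int_\Omega w^{(p+1)d/(d-2)}\,\omega_{\mathrm{vol}}\big)^{\frac{d-2}{d}}$. Substituting these and raising to the appropriate powers, the quantity $C_2\int_\Omega|B|^2w^{p+1}\,\omega_{\mathrm{vol}}$ gets bounded by $C\,\|B\|_{L^s(\Omega,\mathbb{R})}^2\,X^{1-\lambda}Y^{\lambda}$, for a constant $C$ depending only on $d,\gamma,\rho,\sigma,p,s$, where $X \coloneqq \big(\int_\Omega w^{p+\gamma}\,\omega_{\mathrm{vol}}\big)^{\frac{p+1}{p+\gamma}}$ and $Y \coloneqq \big(\int_\Omega w^{(p+1)d/(d-2)}\,\omega_{\mathrm{vol}}\big)^{\frac{d-2}{d}}$. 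Then I would apply Young's inequality in the form $X^{1-\lambda}Y^{\lambda} \leq \epsilon^{-1/(1-\lambda)}(1-\lambda)\,X + \epsilon^{1/\lambda}\lambda\,Y$ for $\epsilon>0$, and pick $\epsilon$ so that the coefficient multiplying $Y$ is exactly $\tfrac{p}{\sigma^2(p+1)^2}$. Solving for $\epsilon$ and tracking exponents, the resulting coefficient of $X$ is proportional to $\big(\|B\|_{L^s(\Omega,\mathbb{R})}^2\big)^{1/(1-\lambda)} = \|B\|_{L^s(\Omega,\mathbb{R})}^{2/(1-\lambda)} = \|B\|_{L^s(\Omega,\mathbb{R})}^{\frac{2s}{s-d}} \leq \theta^{\frac{2s}{s-d}}$, precisely because $\lambda = \tfrac ds$; all remaining factors (the $\rho$-power, $C_2$, the chosen $\epsilon$, and the $\lambda$-dependent constants) collapse into a single $C_4 = C_4(d,\gamma,\rho,\sigma,p,s)>0$, which gives \eqref{estB}.

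The only delicate point is this last exponent bookkeeping: one must verify that the interpolation parameter forced by the Hölder step, $\lambda=\tfrac ds$, is exactly the value making the Young-induced power of $\|B\|_{L^s(\Omega,\mathbb{R})}$ equal to $\tfrac{2s}{s-d}$ and simultaneously allowing the $Y$-coefficient to be tuned to $\tfrac{p}{\sigma^2(p+1)^2}$. This is, however, a direct computation rather than a genuine obstacle.
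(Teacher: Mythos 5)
Your proposal is correct and follows essentially the same route as the paper: Hölder with exponents $\tfrac s2,\tfrac s{s-2}$, interpolation of $L^{(p+1)\frac{s}{s-2}}$ between $L^{p+1}$ and $L^{(p+1)\frac{d}{d-2}}$ with parameter $\lambda=\tfrac ds$, a weighted Young inequality with exponents $\tfrac sd,\tfrac s{s-d}$ tuned to produce the coefficient $\tfrac{p}{\sigma^2(p+1)^2}$, and \eqref{eqLpincl} to pass to the $w^{p+\gamma}$ integral. The only (immaterial) difference is that you invoke \eqref{eqLpincl} before the Young step rather than after; the exponent bookkeeping, including $\|B\|_{L^s}^{2/(1-\lambda)}=\|B\|_{L^s}^{\frac{2s}{s-d}}\leq\theta^{\frac{2s}{s-d}}$, checks out.
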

\begin{proof}
By the H\"older's inequality
$$
\int_\Omega |B|^2\,w^{p+1}\,\omega_{\mathrm{vol}} \leq \left(\int_\Omega |B|^s\,\omega_{\mathrm{vol}}\right)^{\frac2s} \left(\int_\Omega w^{(p+1)\frac{s}{s-2}}\,\omega_{\mathrm{vol}}\right)^{\frac{s-2}s} \,\, .
$$
Owing to the fact that $s>d$, we have $1 < \tfrac{s}{s-2} < \tfrac{d}{d-2}$, and hence $p+1< (p+1)\tfrac{s}{s-2}<(p+1)\tfrac{d}{d-2}$, by interpolation we have $\|w\|_{L^{(p+1)\frac{s}{s-2}}}\leq \|w\|_{L^{(p+1)\frac{d}{d-2}}}^{t} \|w\|_{L^{p+1}}^{1-t}$ for $t=\frac{d}{s}\in(0,1)$, so that
$$
\left(\int_\Omega w^{(p+1)\frac{s}{s-2}}\,\omega_{\mathrm{vol}}\right)^{\frac{s-2}s} \leq \left(\int_\Omega w^{(p+1)\frac{d}{d-2}}\,\omega_{\mathrm{vol}}\right)^{\frac{d-2}s} \left(\int_\Omega w^{p+1}\,\omega_{\mathrm{vol}}\right)^{\frac{s-d}s} \,\, .
$$
Then, by using the Young Inequality with exponents $\tfrac{s}{d}, \tfrac{s}{s-d}$ we get
\begin{align}
C_2\int_\Omega |B|^2\,w^{p+1}\,\omega_{\mathrm{vol}}
&\leq 
\left(\int_\Omega w^{(p+1)\frac{d}{d-2}}\,\omega_{\mathrm{vol}}\right)^{\frac{d-2}s}
\cdot C_2
\left(\int_\Omega |B|^s\,\omega_{\mathrm{vol}}\right)^{\frac2s}
\left(\int_\Omega w^{p+1}\,\omega_{\mathrm{vol}}\right)^{\frac{s-d}s} \nonumber\\
& \label{estB'}
\leq
\tfrac{p}{\sigma^2(p+1)^2}\left(\int_\Omega w^{(p+1)\frac{d}{d-2}}\,\omega_{\mathrm{vol}}\right)^{\frac{d-2}d} +C'_4
\left(\int_\Omega |B|^s\,\omega_{\mathrm{vol}}\right)^{\frac2{s-d}}
\int_\Omega w^{p+1}\,\omega_{\mathrm{vol}}
\end{align}
for some $C'_4=C'_4(d,p,\sigma,s)>0$. Therefore, \eqref{estB} follows from \eqref{In2}, \eqref{eqLpincl} and \eqref{estB'}.
\end{proof}

We are now ready to complete the proof of our first main theorem.

\begin{proof}[Proof of Theorem \ref{main1}]
From now until the end of the proof, we will denote by $C_i$, $i \in \mathbb{N}$, constants that depend only on the data $(d,\gamma,\kappa,\rho,\sigma,\theta,p,s)$. On the other hand, we remark that the constant $\tilde{C}$ coming from \eqref{est3} depends only on the data $(d,\gamma)$.

Notice that, back to \eqref{intEQ} and owing to the previous estimates \eqref{est1}, \eqref{est2}, \eqref{est3}, \eqref{est4}, \eqref{est5}, \eqref{est6}, we end up with
\begin{multline} \label{final1}
\tfrac{p}{\sigma^2(p+1)^2}\left(\int_\Omega w^\frac{(p+1)d}{d-2}\,\omega_{\mathrm{vol}}\right)^{\frac{d-2}d}
+\tfrac16\int_{\Omega}|D^2u|^2w^{p} +\tfrac{2^{\gamma}-1}{2\gamma^2d} \int_\Omega w^{p+\gamma}\,\omega_{\mathrm{vol}}
\leq \\
\leq \frac{\tilde{C}(d,\gamma)}{p+1} \bigg(\int_\Omega |D^2u|^2w^p\,\omega_{\mathrm{vol}}
+\int_\Omega w^{p+\gamma}\,\omega_{\mathrm{vol}}\bigg)
+C_5\bigg(\left(\int_\Omega w^{p+\gamma}\,\omega_{\mathrm{vol}}\right)^{\frac{p+1}{p+\gamma}}
+\int_\Omega f^2\,w^p\,\omega_{\mathrm{vol}}\bigg) \,\, .
\end{multline}
We then apply the Young Inequality to the third term of the right-hand side of \eqref{final1} to get
$$
C_5\left(\int_\Omega w^{p+\gamma}\,\omega_{\mathrm{vol}}\right)^{\frac{p+1}{p+\gamma}}\leq \tfrac{2^{\gamma}-1}{4\gamma^2d} \int_\Omega w^{p+\gamma}\,\omega_{\mathrm{vol}}+C_6 \,\, .
$$
Henceforth, after absorbing the first term of the right-hand side on the left-hand side of \eqref{final1}, it follows that there exists $\bar{p}=\bar{p}(d,\gamma)>1$ such that \begin{equation} \label{final2}
2\left(\int_\Omega w^\frac{(p+1)d}{d-2}\,\omega_{\mathrm{vol}}\right)^{\frac{d-2}d} \leq C_7\bigg(1+\int_\Omega f^2\,w^p\,\omega_{\mathrm{vol}}\bigg)
\end{equation}
for any $p>\bar{p}$. We now handle the term involving the source $f$. We use the H\"older inequality, with exponents
$$
\beta_p'=\tfrac{(p+1)d}{(d-2)p} \quad \text{ and } \quad \beta_p \coloneqq \tfrac{(p+1)d}{d+2p} \,\, ,
$$
together with the the generalized Young inequality to obtain
\begin{equation} \label{holderfin1} \begin{aligned}
C_7 \int_\Omega f^2\,w^p\,\omega_{\mathrm{vol}} &\leq C_7 \bigg(\int_\Omega w^{\frac{(p+1)d}{d-2}}\,\omega_{\mathrm{vol}}\bigg)^{\frac{(d-2)p}{(p+1)d}} \bigg(\int_\Omega f^{2\beta_p}\,\omega_{\mathrm{vol}}\bigg)^{\frac1{\beta_p}} \\
&\leq \left(\int_\Omega w^\frac{(p+1)d}{d-2}\,\omega_{\mathrm{vol}}\right)^{\frac{d-2}d}+C_8\big(\|f\|_{L^{2\beta_p}(\Omega,\mathbb{R})}\big)^{2(p+1)} \,\, .
\end{aligned} \end{equation}
Let us set now
$$
r \coloneqq \tfrac{2(p+1)d}{d-2} \,\, , \quad q \coloneqq 2\beta_p \,\, .
$$
Then, by means of \eqref{final2} and \eqref{holderfin1}, it is straightforward to see that there exists $\bar{r}=\bar{r}(d,\gamma)>1$ such that, for any $r\geq \bar{r}$, there exist a constant $C=C(d,\gamma,\kappa,\rho,\sigma,\theta,r,s)>0$ such that
$$
\|\nabla u\|_{L^r(\Omega,TM|_{\Omega})}\leq C\big(1+\|f\|_{L^{q}(\Omega,\mathbb{R})}\big) \,\, .
$$
Moreover, we get $r \to +\infty$ and $q \to d^{\,-}$ as $p \to +\infty$. This concludes the proof.
\end{proof}

\subsection{Proof of Theorem \ref{main2}} \hfill \par

The proof of Theorem \ref{main2} is a consequence of the following Proposition \ref{mainprop2}. We follow the presentation given in \cite{CGell}: we first state, without proving, Proposition \ref{mainprop2} and we show how to apply it to prove Theorem \ref{main2}. Subsequently, we prove Proposition \ref{mainprop2}.

\subsubsection{Proof of Theorem \ref{main2}: Part I} \hfill \par

Let us define the function
$$
h: [0,+\infty) \to \mathbb{R} \,\, , \quad h(t) \coloneqq \tfrac{2}{1+\delta}(1+t)^{\frac{1+\delta}{2}} \quad \text{ with $0<\delta<1$ to be chosen later }
$$
and set
$$
w \coloneqq \tfrac12|\nabla u|^2 \,\, , \quad z \coloneqq h(w) \,\, .
$$
We will work on super-level sets of $z$, and so we define
$$
z_k \coloneqq (z-k)^+ \quad \text{ and } \quad \Omega_k\coloneqq\{x\in\Omega:z>k\} \quad \text{ for any } k \geq 0 \,\, .
$$

The main result of this section is the following

\begin{prop} \label{mainprop2}
There exists $\delta=\delta(d,\gamma,\kappa,q) \in (0,1)$ and a continuous function $\zeta: [0,+\infty) \to [0,+\infty)$, which depends on $d,\gamma,\kappa,q,K$, such that
$$
\lim_{t \to 0^+} \zeta(t) = 0^+
$$
and, for any $k \ge 1$, it holds that
\begin{equation}\label{mainest}
\bigg(\int_{\Omega_k} z_k^{\frac{q\gamma}{1+\delta}}\, \omega_{\mathrm{vol}}\bigg)^{\frac{d-2}d} \leq
\int_{\Omega_k} z_k^{\frac{q\gamma}{1+\delta}}\, \omega_{\mathrm{vol}}
+\zeta (\rm{vol}(\Omega_k)) \,\, .
\end{equation}
\end{prop}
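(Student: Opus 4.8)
The plan is to run an integral Bernstein argument on the super-level sets $\Omega_k$, based on the generalized Bochner identity \eqref{bw2} of Lemma \ref{lemboc}. One fixes $\delta=\delta(d,\gamma,\kappa,q)\in(0,1)$ and a testing exponent $\beta=\beta(d,\gamma,q,\delta)>0$ only at the end, the two being chosen so that, after the Sobolev inequality, the leading term carries exactly the power $\tfrac{q\gamma}{1+\delta}$ of $z_k$. Multiplying \eqref{bw2} by $z_k^\beta$, integrating over $\Omega_k$ and integrating the term $\int_{\Omega_k}(\Delta z)\,z_k^\beta\,\omega_{\mathrm{vol}}$ by parts via \eqref{intpart}, two boundary contributions appear: the one on the interior part $\{z=k\}$ of $\partial\Omega_k$ vanishes because $z_k$ does, while the one on $\partial\Omega$ has the favorable sign, since by Lemma \ref{segno} and \eqref{D1} one has $\partial_\nu z=h'(w)\,\partial_\nu w=-h'(w)\,\mathrm{II}(\nabla u,\nabla u)\le0$. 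This produces the coercive quantity $\int_{\Omega_k}z_k^{\beta-1}|\nabla z|^2\,\omega_{\mathrm{vol}}$, which through $\nabla z=z^{(1)}\nabla w$ is precisely what is needed to feed \eqref{sob}.

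Next I would extract coercivity from the equation. Since $h''\le0$, the term $z^{(2)}|D^2u(\nabla u)|^2$ has the good sign, and using $|D^2u(\nabla u)|^2\le2w\,|D^2u|^2$ together with the elementary bound $-2w\,h''(w)\le(1-\delta)h'(w)$ it is reabsorbed into a fraction of $\int_{\Omega_k}z^{(1)}|D^2u|^2 z_k^\beta\,\omega_{\mathrm{vol}}$, leaving a positive multiple $\delta\int_{\Omega_k}z^{(1)}|D^2u|^2 z_k^\beta\,\omega_{\mathrm{vol}}$ on the left-hand side. Splitting this, invoking $|D^2u|^2\ge\tfrac1d(\Delta u)^2$, inserting \eqref{hjn} with $B=0$ and using the algebraic inequality \eqref{ab}, one gains coercivity of order $\int_{\Omega_k}z^{(1)}|\nabla u|^{2\gamma}z_k^\beta\,\omega_{\mathrm{vol}}$ up to an $f$-dependent remainder. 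The first-order term $|\nabla u|^{\gamma-2}g(\nabla u,\nabla w)$ is handled exactly as in Theorem \ref{main1}: writing $z^{(1)}z_k^\beta\nabla w$ as a multiple of the gradient of $z_k^{\beta+1}$, integrating by parts (the boundary term vanishes because $\partial_\nu u=0$), using $|\Delta u|\le\sqrt d\,|D^2u|$ and Young's inequality, it is dominated by a small multiple of $\int_{\Omega_k}z^{(1)}|D^2u|^2 z_k^\beta\,\omega_{\mathrm{vol}}$ plus a multiple of $\int_{\Omega_k}z^{(1)}|\nabla u|^{2\gamma}z_k^\beta\,\omega_{\mathrm{vol}}$; finally the Ricci term is absorbed via the lower bound of \eqref{D2}.

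At this point I would apply \eqref{sob} (using $C_{\rm Sob}(\Omega)\le\sigma$) to $z_k^{(\beta+1)/2}$: its left-hand side becomes $\big(\int_{\Omega_k}z_k^{q\gamma/(1+\delta)}\,\omega_{\mathrm{vol}}\big)^{(d-2)/d}$ by the choice of $\beta$, its gradient part is controlled, through the coercive quantity of the first step, by the right-hand side of the tested Bochner identity, and its zeroth-order part $\int_{\Omega_k}z_k^{\beta+1}\,\omega_{\mathrm{vol}}$ — carrying a strictly smaller power of $z_k$ than $\tfrac{q\gamma}{1+\delta}$ — is estimated by Hölder and then Young on $\Omega_k$. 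It then remains to bound the right-hand side of the tested identity. The $f$-terms are treated by the Hölder inequality on $\Omega_k$ against $\|f\|_{L^q(\Omega_k)}\le K$: the constraint $q>2$ is what makes $f^2$ lie in $L^{q/2}(\Omega_k)$ with $q/2>1$, so that it can be paired with a power of $z_k$, and the constraint $q>d\tfrac{\gamma-1}{\gamma}$ is what keeps the resulting powers of $z_k$ within reach of $\int_{\Omega_k}z_k^{q\gamma/(1+\delta)}\,\omega_{\mathrm{vol}}$; the remaining lower-order contributions — where the bound $\|\nabla u\|_{L^1(\Omega)}\le K$ is used to keep constants independent of $k$ — and the multiples of $\int_{\Omega_k}z^{(1)}|\nabla u|^{2\gamma}z_k^\beta\,\omega_{\mathrm{vol}}$ that could not be absorbed are dealt with likewise. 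Each such term is bounded by $\int_{\Omega_k}z_k^{q\gamma/(1+\delta)}\,\omega_{\mathrm{vol}}$ raised to a power $\le1$ times a strictly positive power of $\mathrm{vol}(\Omega_k)$; converting, via Young, each such product with exponent $<1$ into a small multiple of $\int_{\Omega_k}z_k^{q\gamma/(1+\delta)}\,\omega_{\mathrm{vol}}$ plus a quantity vanishing as $\mathrm{vol}(\Omega_k)\to0$, and finally fixing $\delta$ so that all absorption constants close with a factor $1$ left in front of $\int_{\Omega_k}z_k^{q\gamma/(1+\delta)}\,\omega_{\mathrm{vol}}$, yields \eqref{mainest} with $\zeta$ the (continuous, vanishing at $0$) sum of all the $\mathrm{vol}(\Omega_k)$-dependent remainders.

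The main obstacle will be the exponent bookkeeping. Contrary to Theorem \ref{main1}, where the factor $\tfrac1{p+1}$ could be sent to zero to absorb the first-order term, the testing exponent here is rigidly fixed by the requirement of landing on the power $\tfrac{q\gamma}{1+\delta}$ after Sobolev; and because $q>d\tfrac{\gamma-1}{\gamma}$ strictly, the coercivity gained from the equation sits at a power of $z_k$ strictly below $\tfrac{q\gamma}{1+\delta}$, so the estimate cannot be closed by a plain absorption and one is forced into the unbalanced form \eqref{mainest}, with $\int_{\Omega_k}z_k^{q\gamma/(1+\delta)}\,\omega_{\mathrm{vol}}$ reappearing on the right. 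Making everything fit requires a careful simultaneous choice of $\delta\in(0,1)$ and $\beta=\beta(\delta)$ so that all the Hölder exponents used in the last step stay admissible while the reabsorptions of the $h''$-term, of the first-order term and of the Ricci term all leave positive margins; it is exactly the two inequalities $q>2$ and $q>d\tfrac{\gamma-1}{\gamma}$ that make such a choice possible. Verifying that every surviving term genuinely carries a positive power of $\mathrm{vol}(\Omega_k)$, so that $\zeta(t)\to0^+$ as $t\to0^+$, is the technical heart of the argument.
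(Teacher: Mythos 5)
Your overall strategy coincides with the paper's: test the generalized Bochner identity \eqref{bw2} with $z_k^\beta$ on $\Omega_k$, use Lemma \ref{segno} for the boundary sign, recover $\delta z^{(1)}|D^2u|^2$ from the $h''$-term (your bound $-2wh''(w)\le(1-\delta)h'(w)$ is exactly \eqref{h2}), gain the coercive quantity $\Phi\int_{\Omega_k}z^{\eta}z_k^{\beta}\,\omega_{\mathrm{vol}}$ from the equation, treat the $f$-terms by H\"older with an intermediate exponent strictly between $d\tfrac{\gamma-1}{\gamma}$ and $q$, and close with the Sobolev inequality in the unbalanced form \eqref{mainest}.

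There is, however, a genuine gap in your treatment of the transport term $-\int_{\Omega_k}|\nabla u|^{\gamma-2}g(\nabla u,\nabla z)\,z_k^{\beta}\,\omega_{\mathrm{vol}}$. You propose to integrate it by parts as in Theorem \ref{main1}; after Young's inequality this produces, besides a small multiple of $\int z^{(1)}|D^2u|^2z_k^{\beta}$, a term comparable to $C_\epsilon\int_{\Omega_k}z^{\eta}z_k^{\beta}\,\omega_{\mathrm{vol}}$ with $C_\epsilon\sim\delta^{-1}$ --- the full coercive quantity with a constant far exceeding $\Phi\sim\delta$, so it cannot be absorbed. In Theorem \ref{main1} this was harmless because the prefactor $\tfrac1{p+1}$ could be sent to $0$; here $\beta$ is rigidly fixed by \eqref{bo2}, as you yourself observe. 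Your fallback --- bounding the un-absorbed multiples of $\int z^{(1)}|\nabla u|^{2\gamma}z_k^{\beta}$ by a power $\le1$ of $\int z_k^{q\gamma/(1+\delta)}$ times a positive power of $\mathrm{vol}(\Omega_k)$ --- does not work, because that quantity involves $z^{\eta}=(z_k+k)^{\eta}$ rather than $z_k^{\eta}$: taking $z\approx k+\varepsilon$ on a set of small volume and letting $k\to\infty$ shows that no such bound can hold uniformly in $k\ge1$, while $\zeta$ is not allowed to depend on $k$. The paper avoids this by applying Young directly against the gradient coercivity $\beta\int z_k^{\beta-1}|\nabla z|^2$ coming from the Laplacian term; this yields $|\nabla u|^{2(\gamma-1)}z_k^{\beta+1}\sim z^{\eta-1}z_k^{\beta+1}$, which carries one power of $z$ less than critical, so a weighted Young inequality with exponents $\big(\tfrac{\eta}{\eta-1},\eta\big)$ converts it into $\tfrac14\Phi\,z^{\eta}z_k^{\beta}$ (absorbable) plus a pure power $z_k^{\beta+\eta}$ with $\beta+\eta<(\beta+1)\tfrac{d}{d-2}$, which is then legitimately handled by your H\"older-against-volume step. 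With this replacement your argument matches the paper's.
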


\noindent that implies Theorem \ref{main2}. We show here how to use it to get the maximal regularity estimate. Notice that it is the same proof of \cite{CGell}, that we propose here for the convenience of the reader.

\begin{proof}[Proof of Theorem \ref{main2}]
Let us set
$$
y_k \coloneqq \int_{\Omega_k} z_k^{\frac{q\gamma}{1+\delta}}\, \omega_{\mathrm{vol}}
$$
and define the function
$$
\phi: [0,+\infty) \to \mathbb{R} \,\, , \quad \phi(y) \coloneqq y^{\frac{d-2}{d}}-y \,\, ,
$$
so that \eqref{mainest} is equivalent to
\begin{equation} \label{eq:yk}
\phi(y_k) \leq \zeta ({\rm vol}(\Omega_k)) \quad \text{ for any } k\geq1 \,\, .
\end{equation}
Notice that $\phi$ has a unique maximizer $y^* \coloneqq (\frac{d-2}{d})^{\frac{d}{2}}$, with corresponding value $\phi^* \coloneqq \phi(y^*) >0$. Moreover, for any $\bar{\zeta} \in [0,\phi^*)$, the equation $\phi(y)=\bar{\zeta}$ has two roots, that we denote by $y^{\pm}(\bar{\zeta})$, such that $0 \leq y^{-}(\bar{\zeta}) < y^* < y^{+}(\bar{\zeta}) \leq 1$. Since $\zeta(t) \to 0^+$ as $t \to 0^+$, there exists $t^*=t^*(d,\gamma,p,K)$ such that
\begin{equation} \label{eq:tstar}
0 < t^* < \phi^* \,\, , \quad \zeta(t)<t^* \,\, \text{ for any } 0\leq t<t^* \,\, .
\end{equation}
By the Chebyshev inequality \cite[p.6]{Bogachev}
$$
{\rm vol}(\Omega_k) = {\rm vol}\Big(\Big\{x \in \Omega : w(x)^{\frac12} > \big(\big(\tfrac{1+\delta}2k\big)^{\frac2{1+\delta}}-1\big)^{\frac12}\Big\}\Big) \leq \frac1{\big(\big(\tfrac{1+\delta}2k\big)^{\frac2{1+\delta}}-1\big)^{\frac12}}\int_{\Omega} w^{\frac12}\, \omega_{\mathrm{vol}}
$$
and so
\begin{equation} \label{eq:Cheb}
\|\nabla u\|_{L^1(\Omega,TM|_{\Omega})} < \sqrt{2}\big(\big(\tfrac{1+\delta}2k\big)^{\frac2{1+\delta}}-1\big)^{\frac12}t^* \quad \Longrightarrow \quad {\rm vol}(\Omega_k) < t^* \,\, .
\end{equation}
Set
$$
k^* \coloneqq \tfrac2{1+\delta}\Big(1+\tfrac12\big(\tfrac1{t^*}\|\nabla u\|_{L^1(\Omega,TM|_{\Omega})}\big)^2\Big)^{\frac{1+\delta}2}
$$
and notice that by \eqref{eq:yk}, \eqref{eq:tstar} and \eqref{eq:Cheb}
$$
\phi(y_k) < t^* \quad \text{ for any } k > k^* \,\, ,
$$
which provides the alternative $0 \leq y_k < y^*$ or $y^* < y_k \leq 1$ for any $k > k^*$. By the smoothness assumption on $u$, the function $k \mapsto y_k$ is continuous and it eventually vanishes for large values of $k$. Hence, we deduce that
$$
0 \leq y_k < y^* \quad \text{ for any } k>k^* \,\, .
$$
Standard properties of the Lebesgue spaces give
$$
\||\nabla u|^\gamma\|_{L^q(\Omega,\mathbb{R})}^{\frac{1+\delta}{\gamma}} =
\||\nabla u|^{1+\delta}\|_{L^{\frac{\gamma q}{1+\delta}}(\Omega,\mathbb{R})} \leq
2^{\frac{1+\delta}2} \|(1+w)^{\frac{1+\delta}2}\|_{L^{\frac{\gamma q}{1+\delta}}(\Omega,\mathbb{R})} =
2^{\frac{\delta-1}2}(1+\delta)\,\|z\|_{L^{\frac{\gamma q}{1+\delta}}(\Omega,\mathbb{R})} \,\, .
$$
Then
\begin{equation} \label{eqfinT2(1)}
\|z\|_{L^{\frac{q\gamma}{1+\delta}}(\Omega,\mathbb{R})} \leq
\|z_{k^*}+k^*\|_{L^{\frac{q\gamma}{1+\delta}}(\Omega,\mathbb{R})} \leq
\|z_{k^*}\|_{L^{\frac{q\gamma}{1+\delta}}(\Omega_{k^*},\mathbb{R})}+k^*{\rm vol}(\Omega)^{\frac{q\gamma}{1+\delta}} \leq (y^*)^{\frac{1+\delta}{q\gamma}} +k^*{\rm vol}(\Omega)^{\frac{q\gamma}{1+\delta}}
\end{equation}
from which it follows that there exists $C=C(d,\gamma,\kappa,\rho,\sigma,q,K)>0$ such that
\begin{equation} \label{eqfinT2(2)}
\||\nabla u|^\gamma\|_{L^q(\Omega,\mathbb{R})} < C \,\, .
\end{equation}
The estimate on $\|\Delta u\|_{L^q(\Omega,\mathbb{R})}$ follows readily by using the equation itself and \eqref{eqfinT2(2)}.
\end{proof}

\subsubsection{Proof of Theorem \ref{main2}: Part II} \hfill \par

We proceed now with the Proof of Proposition \ref{mainprop2}. It is easy to verify that for any $t\geq0$
\begin{equation}\label{h1}
h'(t)t^{\frac12} \leq (1+t)^{\frac\delta2}\ ,
\end{equation}
while for $t\geq0$ and $\delta\in(0,1)$ straightforward computations lead to
\begin{equation}\label{h2}
h'(t) + 2th''(t) \geq \delta h'(t)\ .
\end{equation}
Moreover, for the sake of notation, we set
$$
z^{(1)} \coloneqq h'(w) \quad \text{ and } z^{(2)} \coloneqq h''(w) \,\, .
$$
and we observe that
\begin{equation} \label{h3}
z^{(1)} = (1+w)^{\frac{\delta-1}{2}}=\Big(\tfrac{\delta+1}{2}z \Big)^{\frac{\delta-1}{1+\delta}} \,\, .
\end{equation}
Notice that, by the regularity of $u$, it holds $z_k \in W^{1,\infty}(\Omega,\mathbb{R})$. Since by \eqref{gradw} and the chain rule it hold
$$
\nabla \big(\tfrac1{\gamma}|\nabla u|^{\gamma}\big) = |\nabla u|^{\gamma-2}D^2u(\nabla u) \quad \text{ and } \quad \nabla z = z^{(1)} D^2u(\nabla u) \,\, ,
$$
by
\eqref{bw2}, \eqref{hjn} and \eqref{tildeIn2} we get the following equation satisfied by $z$:
\begin{equation}\label{eqz}
-\Delta z +z^{(1)}|D^2u|^2 +z^{(2)}|D^2u(\nabla u)|^2 +z^{(1)}\mathrm{Ric}(\nabla u,\nabla u) +|\nabla u|^{\gamma-2}g(\nabla z,\nabla u) = z^{(1)}g(\nabla f,\nabla u) \,\, .
\end{equation}
We pick $\beta>1$ to be chosen later and we multiply \eqref{eqz} by $z_k^\beta$. Since $z_k$ vanishes on $\Omega\backslash\Omega_k$, integrating over $\Omega$ and $\Omega_k$ is the same, so we get
\begin{multline} \label{intEQ2}
-\int_{\Omega_k} \Delta z \, z_k^{\beta} \, \omega_{\mathrm{vol}}
+\int_{\Omega_k} \big(z^{(1)} \, |D^2u|^2  +z^{(2)} \, |D^2u(\nabla u)|^2\big) \, z_k^{\beta} \, \omega_{\mathrm{vol}} = \\
= -\int_{\Omega_k} |\nabla u|^{\gamma-2} \, g(\nabla z,\nabla u) \, z_k^{\beta} \, \omega_{\mathrm{vol}}
-\int_{\Omega_k} z^{(1)} \, \mathrm{Ric}(\nabla u,\nabla u) \, z_k^{\beta} \, \omega_{\mathrm{vol}}
+\int_{\Omega_k} z^{(1)} \, g(\nabla f,\nabla u) \, z_k^{\beta} \, \omega_{\mathrm{vol}} \,\, .
\end{multline}

\begin{lemma}
The following integral inequality holds:
\begin{multline} \label{mainineq}
\beta \int_{\Omega_k}z_k^{\beta-1}|\nabla z|^2\, \omega_{\mathrm{vol}}
+\tfrac{\delta}2 \int_{\Omega_k} z^{(1)}|D^2u|^2z_k^{\beta} \, \omega_{\mathrm{vol}}
+\Phi \int_{\Omega_k} z^{\eta} \, z_k^{\beta}\, \omega_{\mathrm{vol}} \leq \\
\leq \tfrac{\delta c(\gamma)}{d} \int_{\Omega_k} z^{(1)}\big(1+f^2\big) \, z_k^{\beta}\, \omega_{\mathrm{vol}}
-\int_{\Omega_k} fz^{(1)}z_k^{\beta}\Delta u\, \omega_{\mathrm{vol}}
-\int_{\Omega_k} fz^{(2)}z_k^{\beta}g(D^2u(\nabla u),\nabla u)\, \omega_{\mathrm{vol}} \\
-\beta \int_{\Omega_k} f z^{(1)}z_k^{\beta-1}g(\nabla z,\nabla u)\, \omega_{\mathrm{vol}}
-\int_{\Omega_k} |\nabla u|^{\gamma-2} \, g(\nabla z,\nabla u) \, z_k^{\beta} \, \omega_{\mathrm{vol}} 
-\int_{\Omega_k} z^{(1)} \, \mathrm{Ric}(\nabla u,\nabla u) \, z_k^{\beta} \, \omega_{\mathrm{vol}} \,\, ,
\end{multline}
where $c(\gamma) \coloneqq \max\big\{1,\tfrac{2^{\gamma-2}}{\gamma^2}\big\}$, $\Phi \coloneqq \tfrac{\delta}{2\gamma^2 d} \big(\tfrac{\delta+1}2\big)^{\frac{2\gamma+\delta-1}{\delta+1}}$ and $\eta \coloneqq \frac{2\gamma+\delta-1}{1+\delta}$.
\end{lemma}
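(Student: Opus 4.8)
The inequality \eqref{mainineq} is obtained from the integrated identity \eqref{intEQ2} by three elementary manipulations: an integration by parts in the term $-\int_{\Omega_k}\Delta z\,z_k^\beta\,\omega_{\mathrm{vol}}$, which produces the Dirichlet term $\beta\int_{\Omega_k}z_k^{\beta-1}|\nabla z|^2\,\omega_{\mathrm{vol}}$ together with a boundary contribution of the right sign; an integration by parts in the last term $\int_{\Omega_k}z^{(1)}g(\nabla f,\nabla u)z_k^\beta\,\omega_{\mathrm{vol}}$, which rewrites it as the three $f$-terms appearing on the right-hand side of \eqref{mainineq}; and a pointwise lower bound for the Hessian contribution $z^{(1)}|D^2u|^2+z^{(2)}|D^2u(\nabla u)|^2$ that, via the Schwarz inequality \eqref{schw} and the equation, extracts the coercive power $\Phi z^\eta$. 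Throughout I use that $z_k$ vanishes on $\Omega\setminus\Omega_k$, so that integrating over $\Omega_k$ or over $\Omega$ is the same.

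\emph{The two integrations by parts.} For $\beta>1$ the function $z_k^\beta=\big((z-k)^+\big)^\beta$ lies in $\mathcal{C}^1(\overline\Omega,\mathbb{R})$ with $\nabla(z_k^\beta)=\beta z_k^{\beta-1}\nabla z$. Applying \eqref{intpart} with the function $z_k^\beta$ and the vector field $\nabla z$ gives
\[
-\int_{\Omega_k}\Delta z\,z_k^\beta\,\omega_{\mathrm{vol}}=\beta\int_{\Omega_k}z_k^{\beta-1}|\nabla z|^2\,\omega_{\mathrm{vol}}-\int_{\partial\Omega}z_k^\beta\,\partial_\nu z\,\imath_{\partial\Omega}{}^*(\nu\!\lrcorner\omega_{\mathrm{vol}})\,\, ,
\]
and since $h'\geq0$, Lemma \ref{segno} together with \eqref{D1} yields $\partial_\nu z=h'(w)\,\partial_\nu w=-h'(w)\,\mathrm{II}(\nabla u,\nabla u)\leq0$ on $\partial\Omega$, so the boundary term is $\geq0$ and $-\int_{\Omega_k}\Delta z\,z_k^\beta\,\omega_{\mathrm{vol}}\geq\beta\int_{\Omega_k}z_k^{\beta-1}|\nabla z|^2\,\omega_{\mathrm{vol}}$. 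For the second one, from \eqref{gradw} I have $\nabla z^{(1)}=h''(w)\nabla w=z^{(2)}D^2u(\nabla u)$ and $\nabla z=z^{(1)}D^2u(\nabla u)$, whence
\[
\mathrm{div}\big(z^{(1)}z_k^\beta\nabla u\big)=z^{(2)}z_k^\beta\,g(D^2u(\nabla u),\nabla u)+\beta z^{(1)}z_k^{\beta-1}g(\nabla z,\nabla u)+z^{(1)}z_k^\beta\,\Delta u\,\, ;
\]
applying \eqref{intpart} with $f$ and the vector field $z^{(1)}z_k^\beta\nabla u$, whose normal component on $\partial\Omega$ vanishes by the Neumann condition $\partial_\nu u=0$, I obtain
\[
\int_{\Omega_k}z^{(1)}g(\nabla f,\nabla u)z_k^\beta\,\omega_{\mathrm{vol}}=-\int_{\Omega_k}fz^{(1)}z_k^\beta\Delta u\,\omega_{\mathrm{vol}}-\int_{\Omega_k}fz^{(2)}z_k^\beta g(D^2u(\nabla u),\nabla u)\,\omega_{\mathrm{vol}}-\beta\int_{\Omega_k}fz^{(1)}z_k^{\beta-1}g(\nabla z,\nabla u)\,\omega_{\mathrm{vol}}\,\, ,
\]
which already reproduces three of the terms on the right-hand side of \eqref{mainineq}.

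\emph{The pointwise Hessian bound.} Since $h''\leq0$, hence $z^{(2)}\leq0$, and $|D^2u(\nabla u)|^2\leq|\nabla u|^2|D^2u|^2=2w|D^2u|^2$, the inequality \eqref{h2} gives
\[
z^{(1)}|D^2u|^2+z^{(2)}|D^2u(\nabla u)|^2\geq\big(h'(w)+2wh''(w)\big)|D^2u|^2\geq\delta\,z^{(1)}|D^2u|^2\,\, .
\]
Keeping half of the right-hand side and estimating the other half with \eqref{schw}, $|D^2u|^2\geq\tfrac1d(\Delta u)^2$, then inserting $\Delta u=\tfrac1\gamma|\nabla u|^\gamma-f$ from \eqref{hjn}--\eqref{tildeIn2} and using the elementary inequalities $(a-f)^2\geq\tfrac12a^2-f^2$ and $(1+w)^\gamma\leq2^{\gamma-1}(1+w^\gamma)=2^{\gamma-1}+\tfrac12|\nabla u|^{2\gamma}$, one checks that $\tfrac12(\Delta u)^2+c(\gamma)(1+f^2)\geq\tfrac1{2\gamma^2}(1+w)^\gamma$ with $c(\gamma)=\max\{1,2^{\gamma-2}/\gamma^2\}$. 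Finally, \eqref{h3} and the definition of $h$ give $z^{(1)}(1+w)^\gamma=(1+w)^{\frac{2\gamma+\delta-1}{2}}=\big(\tfrac{1+\delta}2\big)^{\eta}z^\eta$, so that $\tfrac{\delta}{2\gamma^2 d}z^{(1)}(1+w)^\gamma=\Phi z^\eta$. Altogether,
\[
z^{(1)}|D^2u|^2+z^{(2)}|D^2u(\nabla u)|^2\geq\tfrac{\delta}2\,z^{(1)}|D^2u|^2+\Phi z^\eta-\tfrac{\delta c(\gamma)}{d}\,z^{(1)}\big(1+f^2\big)\,\, .
\]

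\emph{Assembly and main obstacle.} Integrating this pointwise bound against $z_k^\beta$ over $\Omega_k$, combining with the first integration by parts, and substituting the second integration by parts for $\int_{\Omega_k}z^{(1)}g(\nabla f,\nabla u)z_k^\beta\,\omega_{\mathrm{vol}}$ into \eqref{intEQ2} — the two terms $-\int_{\Omega_k}|\nabla u|^{\gamma-2}g(\nabla z,\nabla u)z_k^\beta\,\omega_{\mathrm{vol}}$ and $-\int_{\Omega_k}z^{(1)}\mathrm{Ric}(\nabla u,\nabla u)z_k^\beta\,\omega_{\mathrm{vol}}$ being carried over unchanged — produces exactly \eqref{mainineq}. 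There is no conceptual difficulty here: the one genuinely structural point, which is what lets the weighted Bochner argument close, is the sign $h''\leq0$ together with \eqref{h2}, guaranteeing that the "bad" term $z^{(2)}|D^2u(\nabla u)|^2$ is absorbed into a fixed fraction of $z^{(1)}|D^2u|^2$; everything else is bookkeeping, namely matching the constants so as to land on the stated $\Phi$, $\eta$, $c(\gamma)$, and justifying the two integrations by parts under the given regularity of $u$ (the only subtle point being the integrability of $\Delta z$ up to $\partial\Omega$, which is handled exactly as in the proof of Theorem \ref{main1}, if need be by first working on a subdomain $\Omega'$ with $\overline{\Omega'}\subset\Omega$ and then passing to the limit).
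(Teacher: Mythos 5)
Your proposal is correct and follows essentially the same route as the paper's proof: the same two integrations by parts (with the boundary terms killed by Lemma \ref{segno} and the Neumann condition), the same use of $h''\leq 0$ with \eqref{h2} to absorb $z^{(2)}|D^2u(\nabla u)|^2$, and the same splitting of $\delta z^{(1)}|D^2u|^2$ combined with \eqref{schw}, the equation, and $(1+w)^\gamma\leq 2^{\gamma-1}(1+w^\gamma)$ to produce exactly the stated $\Phi$, $\eta$ and $c(\gamma)$. The only cosmetic difference is your use of the slightly sharper elementary inequality $(a-f)^2\geq\tfrac12 a^2-f^2$ in place of the paper's $(a-b)^2\geq\tfrac{a^2}{2}-2b^2$, which leads to the same constants.
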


\begin{proof}
As for the first term in \eqref{intEQ2}, notice that
\begin{equation} \label{cond_zk}
\begin{array}{rl}
\nabla z_k = \nabla z \,\,& \text{ in }\, \Omega_k \,\, , \\
z_k = 0 \,\,& \text{ on }\, \partial\Omega_k\cap \Omega \,\, , \\
\partial_{\nu}z = z^{(1)}\partial_{\nu}w \leq 0 \,\,& \text{ on }\, \partial\Omega_k\cap \partial \Omega \,\, ,
\end{array}
\end{equation}
where the last inequality holds because of Lemma \ref{segno} and the fact that $z^{(1)}\geq0$. Hence by \eqref{intpart} and \eqref{cond_zk} we get
\begin{equation} \label{intEQ2(1)}
-\int_{\Omega_k} \Delta z \, z_k^{\beta} \, \omega_{\mathrm{vol}} =
\int_{\Omega_k} g(\nabla (z_k^{\beta}), \nabla z) \, \omega_{\mathrm{vol}}
-\int_{\partial \Omega_k \cap \partial \Omega}z_k^\beta \partial_\nu z \, \imath_{\partial\Omega}{}^*(\nu \lrcorner \omega_{\mathrm{vol}})
\geq \beta \int_{\Omega_k}z_k^{\beta-1}|\nabla z|^2\, \omega_{\mathrm{vol}} \,\, .
\end{equation}

As for the second term in \eqref{intEQ2}, notice that
\begin{equation} \label{ab'}
(a-b)^2\geq \tfrac{a^2}{2}-2b^2 \quad \text{ for any $a,b \in \mathbb{R}$} \,\, .
\end{equation}
Then by the Cauchy-Schwarz inequality, \eqref{hjn}, \eqref{tildeIn2} and \eqref{ab'} we get \begin{equation} \label{cs1}
|D^2u|^2 \geq \tfrac1d(\Delta u)^2 \geq \tfrac1{2\gamma^2d}|\nabla u|^{2\gamma} -\tfrac2d f^2 \,\, .
\end{equation}
Moreover, since $h''(t)<0$ for any $t \geq 0$, from \eqref{h2} we get \begin{equation} \label{h''<0}
z^{(1)}|D^2u|^2 +z^{(2)}|D^2u(\nabla u)|^2 \geq \big(h'(w) +2wh''(w)\big)|D^2u|^2\geq \delta z^{(1)}|D^2u|^2
\end{equation}
and therefore, by \eqref{cs1} and \eqref{h''<0}
\begin{align}
z^{(1)}|D^2u|^2 +z^{(2)}|D^2u(\nabla u)|^2 &\geq \delta z^{(1)}|D^2u|^2 \nonumber \\
&\geq \tfrac{\delta}2 z^{(1)}|D^2u|^2 +\tfrac{\delta}{4\gamma^2d}z^{(1)}|\nabla u|^{2\gamma} -\tfrac{\delta}d z^{(1)} f^2 \,\, . \label{so}
\end{align}
Note that, since $\gamma>1$ and $w\geq0$, one has $(1+w)^{\gamma} \leq 2^{\gamma-1}(1+w^{\gamma})$ and so
$$
w^{\gamma} \geq 2^{1-\gamma}(1+w)^{\gamma} -1 \,\, ,
$$
which implies, together with \eqref{h3}, that
\begin{align}
\tfrac{\delta}{4\gamma^2d}z^{(1)}|\nabla u|^{2\gamma}&= \tfrac{\delta}{\gamma^2 d}2^{\gamma-2}(1+w)^{\frac{\delta-1}2}w^{\gamma} \nonumber \\
&\geq \tfrac{\delta}{2\gamma^2 d}(1+w)^{\frac{\delta-1}2+\gamma} -\tfrac{\delta}{\gamma^2 d}2^{\gamma-2}(1+w)^{\frac{\delta-1}2} \nonumber \\
&= \tfrac{\delta}{2\gamma^2 d} \big(\tfrac{\delta+1}2\big)^{\frac{2\gamma+\delta-1}{\delta+1}}z^{\frac{2\gamma+\delta-1}{\delta+1}} -\tfrac{\delta}{\gamma^2 d}2^{\gamma-2}z^{(1)} \,\, . \label{so'}
\end{align}
Set $c(\gamma) \coloneqq \max\big\{1,\tfrac{2^{\gamma-2}}{\gamma^2}\big\}$, $\Phi \coloneqq \tfrac{\delta}{2\gamma^2 d} \big(\tfrac{\delta+1}2\big)^{\frac{2\gamma+\delta-1}{\delta+1}}$ and $\eta \coloneqq \frac{2\gamma+\delta-1}{1+\delta}$. As a byproduct of \eqref{so} and \eqref{so'}, it holds that 
\begin{multline} \label{intEQ2(2)}
\int_{\Omega_k} \big(z^{(1)} |D^2u|^2  +z^{(2)} |D^2u(\nabla u)|^2\big) \, z_k^{\beta} \, \omega_{\mathrm{vol}} \geq \\
\geq \tfrac{\delta}2 \int_{\Omega_k} z^{(1)}|D^2u|^2z_k^{\beta} \, \omega_{\mathrm{vol}}
+\Phi \int_{\Omega_k} z^{\eta} \, z_k^{\beta}\, \omega_{\mathrm{vol}}
-\tfrac{\delta c(\gamma)}{d} \int_{\Omega_k} z^{(1)}\big(1+f^2\big) \, z_k^{\beta}\, \omega_{\mathrm{vol}} \,\, .
\end{multline}

As for the last term in \eqref{intEQ2}, by \eqref{intpart}, \eqref{cond_zk} and the Neumann boundary condition we get
\begin{align}
\int_{\Omega_k}& z^{(1)} \, g(\nabla f,\nabla u) \, z_k^{\beta} \, \omega_{\mathrm{vol}} = \nonumber \\
&= -\int_{\Omega_k} f\,{\rm{div}}(z^{(1)}z_k^{\beta}\nabla u) \, \omega_{\mathrm{vol}} +\int_{\partial \Omega_k \cap \partial \Omega} fz^{(1)}z_k^\beta\,\partial_\nu u \, \imath_{\partial\Omega}{}^*(\nu \lrcorner \omega_{\mathrm{vol}})\nonumber \\
&= -\beta \int_{\Omega_k} f z^{(1)}z_k^{\beta-1}g(\nabla z,\nabla u)\, \omega_{\mathrm{vol}} -\int_{\Omega_k} fz^{(2)}z_k^{\beta}g(D^2u(\nabla u),\nabla u)\, \omega_{\mathrm{vol}} -\int_{\Omega_k} fz^{(1)}z_k^{\beta}\Delta u\, \omega_{\mathrm{vol}} \,\, . \label{intEQ2(3)}
\end{align}

Hence, \eqref{mainineq} follows from \eqref{intEQ2}, \eqref{intEQ2(1)}, \eqref{intEQ2(2)} and \eqref{intEQ2(3)}.
\end{proof}

We now estimate all the terms on the right-hand side of \eqref{mainineq}. We start with the one involving the Ricci curvature.

\begin{lemma}
There exists $C_1 = C_1(d,\gamma,\delta,\kappa)>0$ such that
\begin{equation} \label{mainineq1}
-\int_{\Omega_k} z^{(1)} \, \mathrm{Ric}(\nabla u,\nabla u) \, z_k^{\beta} \, \omega_{\mathrm{vol}} \leq \tfrac14\Phi \int_{\Omega_k} z^{\eta}\,z_k^{\beta}\, \omega_{\mathrm{vol}}  +C_1 \int_{\Omega_k} z_k^{\beta}\, \omega_{\mathrm{vol}} \,\, .
\end{equation}
\end{lemma}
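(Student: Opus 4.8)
The plan is to reduce \eqref{mainineq1} to a single pointwise inequality and then absorb the leading behaviour into the good term $\int_{\Omega_k} z^{\eta} z_k^{\beta}\,\omega_{\mathrm{vol}}$ via Young's inequality. First I would invoke the uniform lower bound on the Ricci curvature from \eqref{D2}, namely $\mathrm{Ric}(\nabla u,\nabla u) \geq -\kappa|\nabla u|^2 = -2\kappa w$, together with $z^{(1)} = h'(w) \geq 0$, to get the pointwise bound $-z^{(1)}\,\mathrm{Ric}(\nabla u,\nabla u) \leq 2\kappa\, z^{(1)} w$ on $\Omega$.

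Next I would rewrite the right-hand side purely in terms of $z$. By \eqref{h3} one has $z^{(1)} = (1+w)^{\frac{\delta-1}{2}}$, hence $z^{(1)} w \leq z^{(1)}(1+w) = (1+w)^{\frac{\delta+1}{2}} = \tfrac{1+\delta}{2}\,z$, the last equality being just the definition of $h$. Combining the two steps gives the pointwise estimate
\[
-z^{(1)}\,\mathrm{Ric}(\nabla u,\nabla u) \leq \kappa(1+\delta)\,z \quad \text{ on } \Omega \,\, .
\]

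The key point is then that, since $\gamma>1$ and $\delta\in(0,1)$, the exponent $\eta = \tfrac{2\gamma+\delta-1}{1+\delta}$ satisfies $\eta>1$ (indeed $2\gamma+\delta-1 > 1+\delta$ is equivalent to $\gamma>1$). Therefore the generalized Young inequality, applied with the conjugate exponents $\eta$ and $\tfrac{\eta}{\eta-1}$, yields for every $\varepsilon>0$ a constant $C(\varepsilon,\gamma,\delta)$ with $\kappa(1+\delta)\,z \leq \varepsilon\, z^{\eta} + C(\varepsilon,\gamma,\delta)$. Choosing $\varepsilon = \tfrac14\Phi$ (where $\Phi=\Phi(d,\gamma,\delta)>0$) we obtain $-z^{(1)}\,\mathrm{Ric}(\nabla u,\nabla u) \leq \tfrac14\Phi\, z^{\eta} + C_1$ with $C_1 = C_1(d,\gamma,\delta,\kappa)>0$; multiplying by $z_k^{\beta}\geq 0$ and integrating over $\Omega_k$ gives exactly \eqref{mainineq1}.

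I do not expect a genuine obstacle here: the estimate is elementary once the algebraic identity $z^{(1)} w \leq \tfrac{1+\delta}{2}z$, coming from the specific choice of $h$, is in place, and once one checks $\eta>1$, which is precisely the superlinearity that makes the absorption into $\int_{\Omega_k} z^{\eta} z_k^{\beta}\,\omega_{\mathrm{vol}}$ legitimate.
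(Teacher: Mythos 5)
Your proposal is correct and follows essentially the same route as the paper: bound $-z^{(1)}\mathrm{Ric}(\nabla u,\nabla u)$ by $2\kappa z^{(1)}(1+w)=\kappa(1+\delta)z$ using \eqref{D2} and \eqref{h3}, then absorb via Young's inequality with exponents $(\eta,\tfrac{\eta}{\eta-1})$, where $\eta>1$ because $\gamma>1$. The only cosmetic difference is that you apply Young pointwise to $z$ versus the constant $1$ before multiplying by $z_k^{\beta}$, whereas the paper splits $z\,z_k^{\beta}=\big(z\,z_k^{\beta/\eta}\big)\big(z_k^{\beta(1-1/\eta)}\big)$; both yield the identical estimate with the same constant dependence.
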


\begin{proof}
By the lower bound on the Ricci tensor and by \eqref{h3}, we have
\begin{align}
-\int_{\Omega_k} z^{(1)} \, \mathrm{Ric}(\nabla u,\nabla u) \, z_k^{\beta} \, \omega_{\mathrm{vol}} & \leq 2\kappa \int_{\Omega_k} z^{(1)} z_k^{\beta}(1+w)\, \omega_{\mathrm{vol}} \nonumber \\
& = 2\kappa \int_{\Omega_k} z_k^{\beta}(1+w)^{\frac{\delta+1}2}\, \omega_{\mathrm{vol}} \nonumber \\
& = \kappa(1+\delta) \int_{\Omega_k}z \, z_k^{\beta}\, \omega_{\mathrm{vol}} \,\, . \label{maineq1-1}
\end{align}
Notice now that, being $\gamma>1$, it holds that $\frac{2\gamma+\delta-1}{1+\delta}>1$ and so, by the weighted Young's inequality with the pair of exponents $(\eta,\tfrac{\eta}{\eta-1})$, we get
\begin{equation} \label{maineq1-2}
z \, z_k^{\beta} = z \, z_k^{\frac{\beta}{\eta}} \, z_k^{\beta(1-\frac1{\eta})} \leq \tfrac1{4\kappa(1+\delta)}\Phi\, z^{\eta} \, z_k^{\beta} + C_1' \,z_k^{\beta}
\end{equation}
for some $C'_1 = C'_1(d,\delta,\gamma,\kappa)>0$. Therefore, \eqref{mainineq1} follows from \eqref{maineq1-1} and \eqref{maineq1-2}.
\end{proof}

We now handle the first three terms on the right-hand side of \eqref{mainineq}. We now set
\begin{equation}\label{p}
p \coloneqq \tfrac{2}{d}\tfrac{d(\gamma-1)}{\gamma} +\tfrac{d-2}{d}q \quad \text{ and } \quad \beta \coloneqq \tfrac{\gamma(p-2)+1-\delta}{1+\delta} \,\, .
\end{equation}
From now on, we will assume that $p>2$, i.e. that $\gamma q > \frac{2d}{d-2}$. For the case $p\leq 2$, we postpone the reader to the comments given at the end of the proof.

\begin{lemma}
There exists $C_2 = C_2(d,\gamma,\delta)>0$ such that
\begin{multline} \label{mainineq2}
\tfrac{\delta c(\gamma)}{d} \int_{\Omega_k} z^{(1)}\big(1+f^2\big) \, z_k^{\beta}\, \omega_{\mathrm{vol}}
-\int_{\Omega_k} fz^{(1)}z_k^{\beta}\Delta u\, \omega_{\mathrm{vol}}
-\int_{\Omega_k} fz^{(2)}z_k^{\beta}g(D^2u(\nabla u),\nabla u)\, \omega_{\mathrm{vol}} \leq \\
\leq
\tfrac{\delta}2 \int_{\Omega_k}|D^2u|^2 z^{(1)} z_k^{\beta}\, \omega_{\mathrm{vol}}
+\tfrac14\Phi \int_{\Omega_k} z^{\eta}\,z_k^{\beta} \, \omega_{\mathrm{vol}}
+C_2 \int_{\Omega_k} (1+|f|^p) \, \omega_{\mathrm{vol}} \,\, .
\end{multline}
\end{lemma}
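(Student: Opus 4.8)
The three terms on the left-hand side of \eqref{mainineq2} all carry a factor $f$, and the plan is to absorb the Hessian-type contributions into the term $\tfrac\delta2\int_{\Omega_k}|D^2u|^2z^{(1)}z_k^\beta\,\omega_{\mathrm{vol}}$ on the right, to push a portion of the remaining weighted $f^2$-mass into $\tfrac14\Phi\int_{\Omega_k}z^\eta z_k^\beta\,\omega_{\mathrm{vol}}$, and to deposit the remainder in $C_2\int_{\Omega_k}(1+|f|^p)\,\omega_{\mathrm{vol}}$.

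First I would reduce the three integrals to a single one. Using $|\Delta u|\le\sqrt d\,|D^2u|$, the identity $D^2u(\nabla u)=\nabla w$ from \eqref{gradw} (whence $|g(D^2u(\nabla u),\nabla u)|\le 2w\,|D^2u|$), and the pointwise bound $2w\,|h''(w)|\le(1-\delta)\,h'(w)$, which follows from \eqref{h2} together with $h''\le0$, the second and third terms on the left of \eqref{mainineq2} are bounded in absolute value by $C(d,\delta)\int_{\Omega_k}z^{(1)}|f|\,|D^2u|\,z_k^\beta\,\omega_{\mathrm{vol}}$. A weighted Young inequality, splitting the weight $z^{(1)}z_k^\beta$ symmetrically between $|D^2u|$ and $|f|$, then yields $\tfrac\delta2\int_{\Omega_k}|D^2u|^2z^{(1)}z_k^\beta\,\omega_{\mathrm{vol}}+C(d,\gamma,\delta)\int_{\Omega_k}z^{(1)}f^2z_k^\beta\,\omega_{\mathrm{vol}}$; combining this with the first term of \eqref{mainineq2}, the whole left-hand side is controlled by $\tfrac\delta2\int_{\Omega_k}|D^2u|^2z^{(1)}z_k^\beta\,\omega_{\mathrm{vol}}$ plus $C(d,\gamma,\delta)\int_{\Omega_k}z^{(1)}(1+f^2)z_k^\beta\,\omega_{\mathrm{vol}}$.

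It remains to estimate $\int_{\Omega_k}z^{(1)}(1+f^2)z_k^\beta\,\omega_{\mathrm{vol}}$. The constant part is easy: since $z^{(1)}\le1$ and $z\ge z_k$, Young's inequality gives $z_k^\beta\le\varepsilon z_k^{\eta+\beta}+C_\varepsilon\le\varepsilon z^\eta z_k^\beta+C_\varepsilon$ (here $\eta>0$, so $\eta+\beta>\beta$). For the $f^2$-part I would apply Young with the conjugate exponents $\tfrac p2$ and $\tfrac p{p-2}$, admissible because $p>2$ under the standing assumption $\gamma q>\tfrac{2d}{d-2}$, writing $f^2\cdot\big(z^{(1)}z_k^\beta\big)\le\varepsilon|f|^p+C_\varepsilon\big(z^{(1)}z_k^\beta\big)^{\frac p{p-2}}$. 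The key point is then the algebraic identity built into the choice \eqref{p}: setting $\alpha\coloneqq\tfrac{1-\delta}{1+\delta}$, so that $z^{(1)}=\big(\tfrac{\delta+1}2z\big)^{-\alpha}$ by \eqref{h3}, one checks that $\tfrac p{p-2}(\beta-\alpha)=\eta+\beta=\tfrac{\gamma p}{1+\delta}$. Hence, using $z\ge z_k$ once more, $\big(z^{(1)}z_k^\beta\big)^{\frac p{p-2}}=\mathrm{const}\cdot z^{-\alpha\frac p{p-2}}z_k^{\beta\frac p{p-2}}\le\mathrm{const}\cdot z_k^{(\beta-\alpha)\frac p{p-2}}=\mathrm{const}\cdot z_k^{\eta+\beta}\le\mathrm{const}\cdot z^\eta z_k^\beta$. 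Choosing $\varepsilon$ large enough that all accumulated coefficients of $z^\eta z_k^\beta$ sum to at most $\tfrac14\Phi$ (the factor $C_\varepsilon$ in front of $|f|^p$ is then absorbed into $C_2$) produces \eqref{mainineq2}.

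The only genuinely delicate step is the last estimate of $\big(z^{(1)}z_k^\beta\big)^{p/(p-2)}$: the crude bound $z^{(1)}\le1$ throws away the decay of the weight and would force $\delta\ge1$, which is excluded. What saves the argument is keeping $z^{(1)}$ as a true negative power of $z$ and exploiting the precise calibration of $p$, $\beta$, $\eta$ in \eqref{p}, so that $z^{-\alpha p/(p-2)}$ and $z_k^{\beta p/(p-2)}$ recombine, via $z\ge z_k$, into exactly $z^\eta z_k^\beta$. Everything else is a routine use of the Young and Cauchy--Schwarz inequalities.
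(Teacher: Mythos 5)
Your argument is correct and follows essentially the same route as the paper's: reduce the two Hessian terms to $C(d,\delta)\int_{\Omega_k}|f|\,z^{(1)}z_k^{\beta}|D^2u|\,\omega_{\mathrm{vol}}$ via $|\Delta u|\leq\sqrt{d}\,|D^2u|$ and $2w|h''(w)|\leq(1-\delta)h'(w)$, absorb the Hessian by Young, and then split $\int_{\Omega_k}z^{(1)}(1+f^2)z_k^{\beta}\,\omega_{\mathrm{vol}}$ using the exponents $\tfrac{p}{2},\tfrac{p}{p-2}$ together with the calibration $\tfrac{p}{p-2}(\beta-\tfrac{1-\delta}{1+\delta})=\eta+\beta$ coming from \eqref{p}. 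The only (immaterial) difference is that you apply Young pointwise where the paper applies H\"older on the integrals followed by Young on the resulting product, and your closing remark about why the crude bound $z^{(1)}\leq1$ cannot be used on the $f^2$-part correctly identifies the role of the negative power of $z$ retained in \eqref{HY}.
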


\begin{proof}
We use Cauchy-Schwarz inequality and the fact that $2th''(t)\leq h'(t)$ to obtain
\begin{multline*}
\tfrac{\delta c(\gamma)}{d} \int_{\Omega_k} z^{(1)}\big(1+f^2\big) \, z_k^{\beta}\, \omega_{\mathrm{vol}}
-\int_{\Omega_k} fz^{(1)}z_k^{\beta}\Delta u\, \omega_{\mathrm{vol}}
-\int_{\Omega_k} fz^{(2)}z_k^{\beta}g(D^2u(\nabla u),\nabla u)\, \omega_{\mathrm{vol}} \leq \\
\leq
\tfrac{\delta c(\gamma)}{d} \int_{\Omega_k} z^{(1)}\big(1+f^2\big) \, z_k^{\beta}\, \omega_{\mathrm{vol}}
+\big(2+\sqrt{d}\big) \int_{\Omega_k}|f|z^{(1)}z_k^{\beta} |D^2u| \, \omega_{\mathrm{vol}} \,\, .
\end{multline*}
Moreover, by the Young's inequality
$$
|f| z^{(1)} z_k^{\beta} |D^2u| \leq
\tfrac{\delta}{2(2+\sqrt{d})}|D^2u|^2 z^{(1)} z_k^{\beta}
+C_2'f^2 z^{(1)} z_k^{\beta}  \,\, ,
$$
for some $C_2'=C_2'(d,\delta)>0$, so that
\begin{multline} \label{1+f2}
\tfrac{\delta c(\gamma)}{d} \int_{\Omega_k} z^{(1)}\big(1+f^2\big) \, z_k^{\beta}\, \omega_{\mathrm{vol}}
-\int_{\Omega_k} fz^{(1)}z_k^{\beta}\Delta u\, \omega_{\mathrm{vol}}
-\int_{\Omega_k} fz^{(2)}z_k^{\beta}g(D^2u(\nabla u),\nabla u)\, \omega_{\mathrm{vol}} \leq \\
\leq
\tfrac{\delta}2 \int_{\Omega_k}|D^2u|^2 z^{(1)} z_k^{\beta}\, \omega_{\mathrm{vol}}
+C_2'' \int_{\Omega_k} z^{(1)}\big(1+f^2\big) \, z_k^{\beta}\, \omega_{\mathrm{vol}} \,\, ,
\end{multline}
with $C_2'' \coloneqq \big(\tfrac{\delta c(\gamma)}{d}+\big(2+\sqrt{d}\big)C_2'\big)$. Since $q>\frac{d(\gamma-1)}{\gamma}$ by hypothesis, in view of \eqref{p} it is easy to check that $\frac{d(\gamma-1)}{\gamma}<p<q$. Owing to these choices, we have $\beta>1$ whenever $\delta$ is sufficiently close to 0 and
\begin{gather}
\eta = \tfrac{2\gamma+\delta-1}{1+\delta}=\tfrac{\delta-1}{1+\delta}\tfrac{p}{p-2}+\beta\tfrac{2}{p-2} \,\, , \label{bo1} \\
(\beta+1)\tfrac{d}{d-2}=\tfrac{\gamma q}{1+\delta} \,\, . \label{bo2}
\end{gather}
Going back to \eqref{1+f2}, we use the H\"older inequality, the Young inequality, equations \eqref{h3}, \eqref{bo1} and the fact that $z_k\leq z$ to obtain
\begin{equation} \label{HY} \begin{aligned}
C_2'' \int_{\Omega_k} z^{(1)}\big(1+f^2\big) \, z_k^{\beta}\, \omega_{\mathrm{vol}} &= C_2'' \int_{\Omega_k}(1+f^2) z^{\frac{\delta-1}{\delta+1}} z_k^{\beta}\, \omega_{\mathrm{vol}} \\
&\leq C_2''
\bigg(\int_{\Omega_k} z^{\frac{\delta-1}{\delta+1}\frac{p}{p-2}}z_k^{\beta\frac{p}{p-2}}\, \omega_{\mathrm{vol}}\bigg)^{\frac{p-2}p}
\bigg(\int_{\Omega_k} (1+f^2)^{\frac{p}2} \, \omega_{\mathrm{vol}}\bigg)^{\frac2p} \\
&\leq \tfrac14\Phi \int_{\Omega_k} z^{\eta}\,z_k^{\beta} \, \omega_{\mathrm{vol}}
+C_2 \int_{\Omega_k} (1+|f|^p) \, \omega_{\mathrm{vol}} 
\end{aligned} \end{equation}
for some $C_2 = C_2(d,\gamma,\delta)>0$. Therefore, \eqref{mainineq2} follows from \eqref{1+f2} and \eqref{HY}.
\end{proof}

\begin{lemma}
There exists $C_3 = C_3(d,\gamma,\delta,q)$ such that
\begin{multline} \label{mainineq3}
-\beta \int_{\Omega_k} f z^{(1)}z_k^{\beta-1}g(\nabla z,\nabla u)\, \omega_{\mathrm{vol}} \leq \\
\leq \tfrac13\beta \int_{\Omega_k} |\nabla z|^2z_k^{\beta-1} \, \omega_{\mathrm{vol}}
+\tfrac14\Phi \int_{\Omega_k}z^{\eta}\, z_k^\beta\, \omega_{\mathrm{vol}}  +C_3(\|f\|_{L^q(\Omega,\mathbb{R})})^p \bigg(\int_{\Omega_k} (1+w)^{\frac{\delta}2\frac{pq}{q-p}} \, \omega_{\mathrm{vol}}\bigg)^{\frac{q-p}{q}} \,\, .
\end{multline}
\end{lemma}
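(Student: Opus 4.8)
\emph{Strategy.} The plan is to estimate the left-hand side of \eqref{mainineq3} by first splitting off the ``good'' term $\tfrac13\beta\int_{\Omega_k}|\nabla z|^2z_k^{\beta-1}\,\omega_{\mathrm{vol}}$ with a weighted Young inequality, and then to reshape the remaining integral --- which turns out to be of the form $\int_{\Omega_k}(1+w)^\delta f^2z_k^{\beta-1}\,\omega_{\mathrm{vol}}$ --- into the other two surviving pieces by one further Young inequality followed by a H\"older inequality.

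First I would use the Cauchy--Schwarz inequality $|g(\nabla z,\nabla u)|\le|\nabla z|\,|\nabla u|=\sqrt{2w}\,|\nabla z|$ together with $z^{(1)}\ge0$ to write
$$
-\beta\int_{\Omega_k}f\,z^{(1)}z_k^{\beta-1}g(\nabla z,\nabla u)\,\omega_{\mathrm{vol}}\le\beta\int_{\Omega_k}|f|\,z^{(1)}z_k^{\beta-1}\sqrt{2w}\,|\nabla z|\,\omega_{\mathrm{vol}}\,\,.
$$
Applying $ab\le\tfrac13a^2+\tfrac34b^2$ with $a=z_k^{(\beta-1)/2}|\nabla z|$ and $b=\sqrt{2w}\,z^{(1)}|f|\,z_k^{(\beta-1)/2}$, and then bounding $(z^{(1)})^2w=w(1+w)^{\delta-1}\le(1+w)^\delta$ --- which is just the squared form of \eqref{h1} --- yields
$$
-\beta\int_{\Omega_k}f\,z^{(1)}z_k^{\beta-1}g(\nabla z,\nabla u)\,\omega_{\mathrm{vol}}\le\tfrac13\beta\int_{\Omega_k}|\nabla z|^2z_k^{\beta-1}\,\omega_{\mathrm{vol}}+\tfrac32\beta\int_{\Omega_k}(1+w)^\delta f^2z_k^{\beta-1}\,\omega_{\mathrm{vol}}\,\,.
$$

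Next I would view the last integrand as $\big((1+w)^{\delta/2}|f|\big)^2\cdot z_k^{\beta-1}$ and apply the weighted Young inequality with the conjugate exponents $\tfrac p2$ and $\tfrac p{p-2}$ --- legitimate precisely because we are in the case $p>2$ --- choosing the weight so that the resulting pure power $z_k^{(\beta-1)p/(p-2)}$ carries an arbitrarily small constant $\varepsilon$ while the term $|f|^p(1+w)^{\delta p/2}$ carries a (large) constant $C_\varepsilon=C_\varepsilon(d,\gamma,\delta,q)$. On $\Omega_k$ with $k\ge1$ one has $z>k\ge1$, and from the definitions \eqref{p} together with the identity \eqref{bo1} one computes
$$
(\beta-1)\tfrac p{p-2}-\beta=\tfrac{2\beta-p}{p-2}=\eta-\tfrac{2p\delta}{(1+\delta)(p-2)}\,\,,
$$
which is nonnegative (here the case assumption $p>2$, equivalently $2\beta>p$ for $\delta$ small, is what is used) and strictly smaller than $\eta$; hence $z_k^{(\beta-1)p/(p-2)}=z_k^{\beta}\,z_k^{(\beta-1)p/(p-2)-\beta}\le z^{\eta}z_k^{\beta}$ pointwise on $\Omega_k$. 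Fixing $\varepsilon=\varepsilon(d,\gamma,\delta,q)$ with $\tfrac32\beta\varepsilon\le\tfrac14\Phi$ then produces the term $\tfrac14\Phi\int_{\Omega_k}z^\eta z_k^\beta\,\omega_{\mathrm{vol}}$.

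Finally, for the leftover $\tfrac32\beta C_\varepsilon\int_{\Omega_k}|f|^p(1+w)^{\delta p/2}\,\omega_{\mathrm{vol}}$ I would use the H\"older inequality with conjugate exponents $\tfrac qp$ and $\tfrac q{q-p}$, admissible since $p<q$ (again a consequence of \eqref{p}), obtaining $C_3(\|f\|_{L^q(\Omega,\mathbb{R})})^p\big(\int_{\Omega_k}(1+w)^{\frac\delta2\frac{pq}{q-p}}\,\omega_{\mathrm{vol}}\big)^{\frac{q-p}{q}}$ with $C_3=\tfrac32\beta C_\varepsilon$, which is exactly \eqref{mainineq3}. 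The one genuinely delicate point --- and the place where the precise relations \eqref{p} and \eqref{bo1} among $p,\beta,\eta,\delta$ are really needed --- is this re-absorption step: the Young inequality unavoidably produces the pure power $z_k^{(\beta-1)p/(p-2)}$, and one must check that its exponent lies in the window $[\beta,\beta+\eta)$ so that, on the super-level set $\Omega_k$ where $z\ge1$, it is dominated by $z^\eta z_k^\beta$ with a constant we are free to make small; the lower endpoint $\beta$ of that window is the borderline $p=2$, which is why that case is excluded here and postponed.
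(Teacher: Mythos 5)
Your proof is correct and follows essentially the same route as the paper: both start from Cauchy--Schwarz together with \eqref{h1} to reduce to $\int|f|(1+w)^{\delta/2}|\nabla z|z_k^{\beta-1}$, split off the three target terms using the exponents $q$, $\tfrac{pq}{q-p}$, $2$, $\tfrac{2p}{p-2}$, and conclude with the same pointwise bound $z_k^{(\beta-1)\frac{p}{p-2}}\le z^{\eta}z_k^{\beta}$ on $\{z>k\ge1\}$ via \eqref{bo1}. The only difference is organizational (the paper uses one four-factor H\"older followed by a three-exponent Young, whereas you peel the terms off sequentially), and your explicit remark that the exponent $(\beta-1)\tfrac{p}{p-2}$ must lie in $[\beta,\beta+\eta)$ is exactly the point the paper's last display is checking.
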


\begin{proof}
We use now the inequality \eqref{h1}, and combine the H\"older, Young and the Cauchy-Schwarz inequalities to get
$$\begin{aligned}
-\beta &\int_{\Omega_k} f z^{(1)}z_k^{\beta-1}g(\nabla z,\nabla u)\, \omega_{\mathrm{vol}} \leq \\
&\leq \sqrt{2}\beta \int_{\Omega_k} |f| (1+w)^{\frac{\delta}2}|\nabla z| z_k^{\beta-1} \, \omega_{\mathrm{vol}} \\
& \leq \sqrt{2}\beta
\bigg(\int_{\Omega_k} |f|^q \, \omega_{\mathrm{vol}}\bigg)^{\frac1q}
\bigg(\int_{\Omega_k} (1+w)^{\frac{\delta}2\frac{pq}{q-p}} \, \omega_{\mathrm{vol}}\bigg)^{\frac{q-p}{pq}}
\bigg(\int_{\Omega_k} |\nabla z|^2z_k^{\beta-1} \, \omega_{\mathrm{vol}}\bigg)^{\frac12}
\bigg(\int_{\Omega_k} z_k^{(\beta-1)\frac{p}{p-2}} \, \omega_{\mathrm{vol}}\bigg)^{\frac{p-2}{2p}} \\
& \leq \tfrac13\beta \int_{\Omega_k} |\nabla z|^2z_k^{\beta-1} \, \omega_{\mathrm{vol}}
+\tfrac14\Phi \int_{\Omega_k} z_k^{(\beta-1)\frac{p}{p-2}} \, \omega_{\mathrm{vol}} \\
& \qquad\qquad\qquad +C_3 \bigg(\int_{\Omega_k} |f|^q \, \omega_{\mathrm{vol}}\bigg)^{\frac{p}q}
\bigg(\int_{\Omega_k} (1+w)^{\frac{\delta}2\frac{pq}{q-p}} \, \omega_{\mathrm{vol}}\bigg)^{\frac{q-p}{q}}
\end{aligned}$$
for some $C_3 = C_3(d,\gamma,\delta,q)$. Since $z> k \geq1$ on $\Omega_k$ and $z_k \leq z$, by \eqref{bo1} we get
$$
\int_{\Omega_k} z_k^{(\beta-1)\frac{p}{p-2}}\, \omega_{\mathrm{vol}} = 
\int_{\Omega_k} z_k^{\beta\frac{2}{p-2}-\frac{p}{p-2}} z_k^\beta\, \omega_{\mathrm{vol}}
\leq \int_{\Omega_k}z^{\beta\frac{2}{p-2}-\frac{1-\delta}{1+\delta}\frac{p}{p-2}}z_k^\beta\, \omega_{\mathrm{vol}} = \int_{\Omega_k}z^{\eta}\,z_k^\beta\, \omega_{\mathrm{vol}}
$$
and then \eqref{mainineq3} follows.
\end{proof}

We now focus on the integral term involving the Hamiltonian part of the equation.
\begin{lemma}
There exists $C_4 = C_4(d,\gamma,\delta,q)>0$ such that
\begin{equation} \label{mainineq4}
-\int_{\Omega_k} |\nabla u|^{\gamma-2} \, g(\nabla u,\nabla z) \, z_k^{\beta} \, \omega_{\mathrm{vol}} \leq
\tfrac13\beta \int_{\Omega_k} |\nabla z|^2z_k^{\beta-1} \, \omega_{\mathrm{vol}} 
+\tfrac14\Phi \int_{\Omega_k} z^{\eta}z_k^{\beta}\, \omega_{\mathrm{vol}}
+ C_4\int_{\Omega_k}z_k^{\beta+\eta}\, \omega_{\mathrm{vol}} \,\, .
\end{equation}
\end{lemma}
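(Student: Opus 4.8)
The plan is to absorb the factor $|\nabla z|$ into the ``good'' term $\tfrac13\beta\int_{\Omega_k}|\nabla z|^2z_k^{\beta-1}\,\omega_{\mathrm{vol}}$ and the remaining power of $|\nabla u|$ into the coercive term $\tfrac14\Phi\int_{\Omega_k}z^{\eta}z_k^{\beta}\,\omega_{\mathrm{vol}}$, at the price of the lower-order term $C_4\int_{\Omega_k}z_k^{\beta+\eta}\,\omega_{\mathrm{vol}}$. First I would bound the integrand pointwise by Cauchy--Schwarz, using $|\nabla u|=(2w)^{1/2}$, obtaining
$$
-|\nabla u|^{\gamma-2}\,g(\nabla u,\nabla z)\,z_k^{\beta} \leq |\nabla u|^{\gamma-1}\,|\nabla z|\,z_k^{\beta} = (2w)^{\frac{\gamma-1}{2}}\,|\nabla z|\,z_k^{\beta} \,\, .
$$
Then I would factor $(2w)^{\frac{\gamma-1}{2}}|\nabla z|\,z_k^{\beta} = \big(|\nabla z|\,z_k^{\frac{\beta-1}{2}}\big)\cdot\big((2w)^{\frac{\gamma-1}{2}}z_k^{\frac{\beta+1}{2}}\big)$ and apply Young's inequality $ab\leq\varepsilon a^2+\tfrac1{4\varepsilon}b^2$ with $\varepsilon=\tfrac13\beta$; integrating over $\Omega_k$, this produces $\tfrac13\beta\int_{\Omega_k}|\nabla z|^2z_k^{\beta-1}\,\omega_{\mathrm{vol}}$ plus $\tfrac{3}{4\beta}\int_{\Omega_k}(2w)^{\gamma-1}z_k^{\beta+1}\,\omega_{\mathrm{vol}}$.

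Next I would convert the leftover term into the variable $z$. From $w\leq 1+w$ and the identity $(1+w)^{\frac{1+\delta}{2}}=\tfrac{1+\delta}{2}z$, which follows from the definition of $h$ (equivalently from \eqref{h3}), together with the elementary relation $\tfrac{2(\gamma-1)}{1+\delta}=\eta-1$, one gets $(2w)^{\gamma-1}\leq 2^{\gamma-1}\big(\tfrac{1+\delta}{2}\big)^{\eta-1}z^{\eta-1}$, so that the leftover term is controlled by $c\int_{\Omega_k}z^{\eta-1}z_k^{\beta+1}\,\omega_{\mathrm{vol}}$ for a constant $c$ depending only on $d,\gamma,\delta,q$ (recall that $\beta$ and $\eta$ depend on these through \eqref{p}).

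The only point requiring some care is to split $z^{\eta-1}z_k^{\beta+1}$ so that the coefficient of $z^{\eta}z_k^{\beta}$ comes out \emph{exactly} $\tfrac14\Phi$. Since $\gamma>1$ and $0<\delta<1$ force $\eta=\tfrac{2\gamma+\delta-1}{1+\delta}>1$, the factorization $z^{\eta-1}z_k^{\beta+1}=\big(z^{\eta}z_k^{\beta}\big)^{\frac{\eta-1}{\eta}}\big(z_k^{\beta+\eta}\big)^{\frac1\eta}$ is legitimate, and the weighted Young inequality with the conjugate exponents $\big(\tfrac{\eta}{\eta-1},\eta\big)$ gives, for every $\varepsilon'>0$, a constant $C_{\varepsilon'}>0$ with $z^{\eta-1}z_k^{\beta+1}\leq\varepsilon'\,z^{\eta}z_k^{\beta}+C_{\varepsilon'}\,z_k^{\beta+\eta}$ pointwise. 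Choosing $\varepsilon'=\Phi/(4c)$ and setting $C_4\coloneqq c\,C_{\varepsilon'}$, which then depends only on $d,\gamma,\delta,q$, yields \eqref{mainineq4} after integrating over $\Omega_k$. I do not expect a genuine obstacle here: the whole argument is a two-step application of Young's inequality, and the one thing to watch is the bookkeeping of constants so that the factors $\tfrac13\beta$ and $\tfrac14\Phi$ land on the nose.
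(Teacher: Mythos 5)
Your proposal is correct and follows essentially the same route as the paper: a first Young inequality splitting $|\nabla u|^{\gamma-1}|\nabla z|z_k^{\beta}$ into $|\nabla z|^2 z_k^{\beta-1}$ plus $|\nabla u|^{2(\gamma-1)}z_k^{\beta+1}$, the pointwise bound $|\nabla u|^{2(\gamma-1)}\lesssim z^{\eta-1}$ via the definition of $h$, and a second weighted Young inequality with exponents $\big(\tfrac{\eta}{\eta-1},\eta\big)$ on the factorization $z^{\eta-1}z_k^{\beta+1}=\big(z^{\eta}z_k^{\beta}\big)^{\frac{\eta-1}{\eta}}\big(z_k^{\beta+\eta}\big)^{\frac{1}{\eta}}$. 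Your constant bookkeeping is in fact slightly more careful than the paper's, and the dependence $C_4=C_4(d,\gamma,\delta,q)$ comes out as claimed.
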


\begin{proof}
By the Young inequality
$$
-\int_{\Omega_k} |\nabla u|^{\gamma-2} \, g(\nabla u,\nabla z) \, z_k^{\beta} \, \omega_{\mathrm{vol}} \leq \tfrac{\beta}3 \int_{\Omega_k} |\nabla z_k|^2z_k^{\beta-1} \, \omega_{\mathrm{vol}} +\tfrac{3}{4\beta} \int_{\Omega_k} |\nabla u|^{2(\gamma-1)}z_k^{\beta+1} \, \omega_{\mathrm{vol}} \,\, .
$$
Since $\beta+\eta=\tfrac{p\gamma}{1+\delta}$ and $t^\frac12\leq h(t)^{\frac{1}{1+\delta}}$, using once more the weighted Young's inequality with the pair $(\eta,\frac{\eta}{\eta-1})$, we get 
$$\begin{aligned}
\tfrac{3}{4\beta} \int_{\Omega_k} |\nabla u|^{2(\gamma-1)} z_k^{\beta+1} \, \omega_{\mathrm{vol}} &\leq \tfrac{3}{4\beta} \int_{\Omega_k}z^{\frac{2(\gamma-1)}{1+\delta}} z_k^{\beta+1}\, \omega_{\mathrm{vol}} \\
&= \tfrac{3}{4\beta} \int_{\Omega_k} z^{\eta-1}z_k^{\frac{\beta(\eta-1)}{\eta}} z_k^{\frac{\beta}{\eta}+1}\, \omega_{\mathrm{vol}} \\
&\leq \tfrac14\Phi \int_{\Omega_k} z^{\eta}z_k^{\beta}\, \omega_{\mathrm{vol}} + C_4\int_{\Omega_k}z_k^{\beta+\eta}\, \omega_{\mathrm{vol}}
\end{aligned}$$
for some $C_4 = C_4(d,\gamma,\delta,q)>0$. Hence, we obtain \eqref{mainineq4}.
\end{proof}

We finally plug \eqref{mainineq1}, \eqref{mainineq2}, \eqref{mainineq3} \eqref{mainineq4} back to \eqref{mainineq}, so that 
\begin{multline} \label{mainineq5} 
\int_{\Omega_k} |\nabla z_k|^2\,z_k^{\beta-1}\, \omega_{\mathrm{vol}} \leq \,\,
 C_5\, \Bigg(\int_{\Omega_k} (1+|f|^p) \, \omega_{\mathrm{vol}} +\int_{\Omega_k}z_k^{\beta+\eta}\, \omega_{\mathrm{vol}} +\int_{\Omega_k} z_k^{\beta}\, \omega_{\mathrm{vol}} \\
+(\|f\|_{L^q(\Omega,\mathbb{R})})^p
\bigg(\int_{\Omega_k} (1+w)^{\frac{\delta}2\frac{pq}{q-p}} \, \omega_{\mathrm{vol}}\bigg)^{\frac{q-p}{q}}\Bigg)
\end{multline}
for some $C_5=C_5(d,\gamma,\delta,\kappa,q)>0$.

\begin{proof}[Proof of Proposition \ref{mainprop2}]

We start with the inequality \eqref{mainineq5}. We handle the term on the left-hand side of \eqref{mainineq5} via the Sobolev inequality \eqref{sob}, and infer the existence of a constant $C_6=C_6(d,\gamma,\delta,\sigma,q)>0$ such that
\begin{equation} \label{final2(1)}
\int_{\Omega_k} |\nabla z|^2z_k^{\beta-1} \, \omega_{\mathrm{vol}} =
\int_{\Omega_k} \big|\nabla \big(z_k{}^{\frac{\beta+1}{2}}\big) \big|^2 \, \omega_{\mathrm{vol}} \geq
C_6\, \bigg(\bigg(\int_{\Omega_k} z_k^{\frac{(\beta+1)d}{d-2}}\, \omega_{\mathrm{vol}}\bigg)^{\frac{d-2}d} -\int_{\Omega_k} z_k^{\beta+1}\, \omega_{\mathrm{vol}}\bigg) \,\, .
\end{equation}
The third term on the right-hand side of the equation can be estimated through the Young's inequality as
\begin{equation} \label{final2(2)}
\int_{\Omega_k} z_k^{\beta}\, \omega_{\mathrm{vol}} \leq \tfrac{\beta}{\beta+1}\int_{\Omega_k}z_k^{\beta+1}\, \omega_{\mathrm{vol}} +\tfrac1{\beta+1}\mathrm{vol}(\Omega_k) \,\, .
\end{equation}
We now discuss the remaining terms on the right-hand side of \eqref{mainineq5}. We choose $\delta>0$ sufficiently small so that $\delta \frac{p q}{q-p}<1$. Therefore, by the H\"older's inequality
\begin{equation} \label{final2(3)}
\int_{\Omega_k} (1+|f|^p) \, \omega_{\mathrm{vol}} \leq
{\rm vol}(\Omega_k)+ (\|f\|_{L^q(\Omega,\mathbb{R})})^p \, {\rm vol}(\Omega_k)^{\frac{q-p}{q}}
\end{equation}
and
\begin{equation} \label{final2(4)} \begin{aligned}
\bigg(\int_{\Omega_k} (1+w)^{\frac{\delta}2\frac{pq}{q-p}} \, \omega_{\mathrm{vol}}\bigg)^{\frac{q-p}{q}}
&\leq \bigg(\int_{\Omega_k} \sqrt{1+w} \, \omega_{\mathrm{vol}}\bigg)^{p\delta} {\rm vol}(\Omega_k)^{\frac{q-p}{q} -p\delta} \\
&\leq {\rm vol}(\Omega_k)^{\frac{q-p}{q}} +\left(\left\|\sqrt{1+\tfrac12|\nabla u|^2}\right\|_{L^1(\Omega,\mathbb{R})}\right)^{p\delta}{\rm vol}(\Omega_k)^{\frac{q-p}{q} -p\delta} \,\, .
\end{aligned} \end{equation}
Therefore, by \eqref{mainineq5} and \eqref{final2(1)}, \eqref{final2(2)}, \eqref{final2(3)}, \eqref{final2(4)} we get
\begin{equation} \label{mainineq6}
\bigg(\int_{\Omega_k} z_k^{\frac{(\beta+1)d}{d-2}}\, \omega_{\mathrm{vol}}\bigg)^{\frac{d-2}d} \leq
C_7\, \Bigg(
\int_{\Omega_k} z_k^{\beta+1}\, \omega_{\mathrm{vol}}
+\int_{\Omega_k}z_k^{\beta+\eta}\, \omega_{\mathrm{vol}}
+{\rm vol}(\Omega_k)
+{\rm vol}(\Omega_k)^{\frac{q-p}{q}}
+{\rm vol}(\Omega_k)^{\frac{q-p}{q} -p\delta}
\Bigg)
\end{equation}
for some $C_7=C_7(d,\gamma,\kappa,\rho,\sigma,q,K)$. Moreover, by suitably applying the weighted Young Inequality twice, we get, using that $\beta+\eta=\frac{p\gamma}{1+\delta}$ along with the relationes $p<q$ and $\frac{q\gamma}{1+\delta}=(\beta+1)\frac{d}{d-2}$, that
\begin{equation} \label{final2(4)}
C_7\int_{\Omega_k} z_k^{\beta+\eta}\, \omega_{\mathrm{vol}} \leq \tfrac12 \int_{\Omega_k} z_k^{\frac{(\beta+1)d}{d-2}}\, \omega_{\mathrm{vol}}
+C_8 {\rm vol}(\Omega_k)
\end{equation}
and
\begin{equation} \label{final2(5)}
C_7\int_{\Omega_k} z_k^{\beta+1}\, \omega_{\mathrm{vol}}\leq
\tfrac12 \int_{\Omega_k} z_k^{\frac{(\beta+1)d}{d-2}}\, \omega_{\mathrm{vol}}
+C_9 {\rm vol}(\Omega_k)
\end{equation}
for some positive constants $C_8,C_9$ depending only on $(d,\gamma,\kappa,\rho,\sigma,q)$. Therefore, by setting
$$
C \coloneqq 2\max{(C_7,C_8,C_9)} \quad \text{ and } \quad
\zeta(t) \coloneqq C\big(t +t^{\frac{q-p}{q}} +t^{\frac{q-p}{q} -p\delta}\big) \,\, ,
$$
the thesis follows from \eqref{mainineq6},\eqref{final2(4)} and \eqref{final2(5)}. We finally remark that, when $p$ defined in \eqref{p} does not satisfy the condition $p>2$, it is enough to take some $\tilde p$ satisfying $p<\tilde p<q$, with $\tilde p>2$ and argue as above. Then, the identity \eqref{bo2} becomes now the strict inequality
$$
(\beta+1)\frac{d}{d-2}>\frac{\gamma q}{1+\delta} \,\, .
$$
Therefore, one can conclude by a further application of H\"older's and Young's inequalities to achieve the same conclusion, which results in an additional term involving $\mathrm{vol}(\Omega_k)$ in $\zeta$ to prove Proposition \ref{mainprop2}.
\end{proof}

\subsection{Final remarks}

\begin{rem} 
Theorem \ref{main1} appears to be new even in the Euclidean setting $(M,g)=(\mathbb{R}^d,\langle\,,\rangle)$, with the condition
\begin{equation} \label{tildeD1} \tag{$\widetilde{D1}$}
\text{$\Omega \subset \mathbb{R}^d$ is a bounded, convex domain with boundary of class $\mathcal{C}^2$ and $d \geq 3$} \,\, ,
\end{equation}
in the presence of a linear first-order term with unbounded coefficient in Lebesgue spaces. A similar proof would lead to the same result (with the same restrictions on the integrability of the data) for problems posed on the flat torus (as analyzed in e.g. \cite{PV}), and the result seems new even in this framework.
\end{rem}

\begin{rem} \label{remEucl}
It is easy to realize that Theorem \ref{main2} yields maximal regularity for the Neumann problem \eqref{hjn} in the Euclidean space $(\mathbb{R}^d,\langle\,,\rangle)$ by assuming the hypothesis \eqref{tildeD1}, \eqref{In1}, \eqref{tildeIn2}. This geometric assumption on the domain is quite natural when dealing with gradient estimates for elliptic problems with Neumann boundary conditions, see \cite{Lio80,CirantJMPA,PorrCCM,PorLeo}, and at this stage we do not know whether it can be dropped. However, we mention that for problems with more regular data a different function $w$ allows to remove this geometric restriction \cite[Proposition 7.1]{PorLeo}, even for equations driven by mean curvature operators \cite[Lemma 2.3]{PorrCCM}.
\end{rem}

\begin{rem}\label{senzabdr}
Both Theorem \ref{main1} and Theorem \ref{main2} hold true when $(M,g)$ is a compact Riemannian manifold without boundary and $\Omega= M$. In this case, since $\partial \Omega = \emptyset$, the condition \eqref{D1} is trivially satisfied and one can avoid all the boundary terms coming from the integration by parts formula.
\end{rem}

\begin{rem}
We remark that, when $q\leq d\tfrac{\gamma-1}{\gamma}$, maximal regularity in general fails due to a counterexample found in \cite[Remark 1]{CGell}. We remark that the presence of a zero-th order term in the equation allows to get the maximal regularity property, at least when $\gamma<2$, at the borderline integrability threshold $\bar{q} \coloneqq d\frac{\gamma-1}{\gamma}$ (see \cite{CGpar}). We do not know whether a smallness condition for the norm of $f$ in $L^{\bar{q}}(\Omega,\mathbb{R})$ would allow to obtain the maximal regularity as well.
\end{rem}

\begin{rem}[Classical vs. strong solutions] \label{remClass-Str}
As remarked in e.g. \cite[Remark 3]{CGell} or \cite{BardiPerthame} in the Euclidean case, our Theorem \ref{main1} and Theorem \ref{main2} hold for strong solutions of class $W^{2,q}(\Omega,\mathbb{R}) \cap W^{1,q\gamma}(\Omega,\mathbb{R})$ and avoid to require classical regularity on the data (so, for instance, assumptions \eqref{In1} and \eqref{In2} can be dropped) arguing from a variational viewpoint, as done in \cite{CGell}.
\end{rem}

\begin{rem}[General first-order terms] \label{Hgen}
As discussed in \cite[Remark 4]{CGell}, one can treat more general Hamiltonians in Theorem \ref{main1} and Theorem \ref{main2} . For example, both results apply to equations of the form
$$
-\Delta u(x)+ H(x,{\rm d} u(x))=f(x)
$$
where $H : T^*M \to \mathbb{R}$ satisfies the following property: there exists an exponent $\gamma>1$ and two positive constants $c_1,c_2 >0$ such that
$$
|H(x,p)-c_1|p|^\gamma|\leq c_2 \quad \text{ for all } (x,p) \in T^*M \,\, .
$$
For instance, the Hamiltonian
$$
H(x,p) \coloneqq c_1|p|^\gamma -h(x) +\lambda \,\, ,
$$
with $c_1>0$, $h : \Omega \to \mathbb{R}$ bounded and $\lambda \in \mathbb{R}$ satisfying $\lambda \geq \lambda_0$ for some fixed constant $\lambda_0 \in \mathbb{R}$, fits within the setting of Theorems \ref{main1} and \ref{main2}. Indeed, it is sufficient to consider the equation
$$
-\Delta u +c_1|\nabla u|^\gamma = f_H \,\, , \quad \text{ with } f_H(x) \coloneqq f(x) -\lambda -h(x)
$$
and observe thatm under the standing assumptions, it holds $\|f_H\|_{L^q(\Omega,\mathbb{R})} \leq \|f\|_{L^q(\Omega,\mathbb{R})} +c$ for some constant $c>0$ independent of $u$ and depending only on $\lambda_0$ and $\inf_\Omega h$. Nonetheless, our Theorem \ref{main1} allows even the presence of unbounded ingredients. In particular, one might consider more general Hamiltonian terms depending explicitly on $x$ and satisfying
$$
|\partial_{xp}^2H(x,p)|\leq C(1+|p|^{\gamma-1}) \quad \text{ for all $(x,p) \in T^*M|_{\overline{\Omega}}$} \,\, ,
$$
where $C>0$ is a constant and $\partial_{xp}^2$ denotes the mixed partial derivatives with respect to the first and second entry, respectively. This results in the presence of an additional integral term that can be treated as in Lemma 4.3 in \cite{PV}.
\end{rem}

\begin{rem}
An integral approach similar to Theorem \ref{main2} can be found in \cite{CM} for equations driven by the $p$-Laplacian without lower-order perturbations. Indeed, the technique in \cite{CM} relies on multiplying the equation against $\Delta u$ and integrating over the super-level sets of the gradient. This heuristically corresponds to the method used in Theorems \ref{main1} and \ref{main2} with the choice $\beta=0$ and $h'(t) \equiv 1$. In particular, if one puts the argument e.g. of Theorem \ref{main1} in a variational setting, the underlying idea is to use as a test function a $p$-Laplacian with a suitable (large) $p$. This refinement is needed due to the presence of a first-order term with superlinear growth, unlike \cite{CM}.
\end{rem}

\medskip
\section{Applications to stationary Mean Field Games systems} \label{sect_4}
\setcounter{equation} 0

In this section, we restrict to the Euclidean setting $(M,g)=(\mathbb{R}^d, \langle \, , \rangle)$ and we apply our Theorem \ref{main2} (see Remark \ref{remEucl}) to the following Mean Field Games (MFG) equipped with Neumann boundary conditions on convex domains:
\begin{equation} \label{mfg}
\begin{cases}
-\Delta u(x) +H(x,{\rm d}u(x))+\lambda=V(m(x)) & x \in \Omega \,\, , \\
-\Delta m(x) -\mathrm{div}\big(\partial_pH(x,{\rm d}u(x))m(x)\big)=0 & x \in \Omega \,\, , \\
\partial_\nu u (x) =0 \,\, , \quad \partial_\nu m(x) +\big(\partial_pH(x,{\rm d}u(x)) \cdot \nu\big)\,m(x) =0 & x \in \partial \Omega \,\, , \\
\int_\Omega m(x)\,{\rm d}x=1\ , \quad m(x)>0 & x \in \overline{\Omega} \,\, .
\end{cases}
\end{equation}
Here, the unknowns are $(u,\lambda,m)$, $\lambda$ being the so-called {\it ergodic constant}, and $\partial_{p}$ denotes the partial derivatives of $H(x,p)$ with respect to the second entry. This coupled system of PDEs arises within the theory of Mean Field Games introduced in the mathematical community by J.-M. Lasry and P.-L. Lions \cite{LL07}, which models differential games with infinitely many indistinguishable rational agents. In particular, system \eqref{mfg} describes the so-called Nash equilibria of a population of agents which aims at minimizing some long-time average criterion.

In this section we drop the periodicity condition, which is usually considered in most of the literature to avoid technicalities, and treat problems confined in a domain $\Omega$ satisfying the condition \eqref{tildeD1}. In particular, this means that the state space for the players is set on $\Omega$ and their trajectories follow a stochastic differential equation with brownian noise $\sqrt{2}B_t$ with reflection at the boundary $\partial\Omega$. We refer to \cite{CirantJMPA,Alpar} for a detailed discussion on such MFG systems, and for further references on the subject.

Our main result for \eqref{mfg} states the existence of classical solutions when the function $V$ depends locally on the density $m$, i.e. it is a local coupling function, in the so-called {\it defocusing regime} \cite{CCPDE}. In particular, we will assume that
\begin{equation} \label{MFG1} \tag{MFG1}
\begin{gathered}
\text{$V: [0,+\infty) \to \mathbb{R}$ is of class $\mathcal{C}^1$ and there exist \,$\alpha>0$\,, \,$C_V>1$\, such that } \\
\text{$C_V^{-1} m^{\alpha-1} \leq V'(m) \leq C_V (m^{\alpha-1}+1)$\, for any \,$m \geq 0$\, .}
\end{gathered} \end{equation}
We also assume that
\begin{equation} \label{MFG2} \tag{MFG2}
\begin{gathered}
\text{$H(x,{\rm d}u(x))=\tfrac1{\gamma} |\nabla u(x)|^\gamma-b(x)$\, for some \,$b \in \mathcal{C}^2(\overline{\Omega},\mathbb{R})$ with \,$\partial_\nu b \geq 0$\, on \,$\partial\Omega$}
\end{gathered} \end{equation}
and 
\begin{equation} \label{MFG3} \tag{MFG3}
\gamma>\tfrac{d}{d-2} \,\, , \quad \alpha<\begin{cases}
\infty&\text{ if }d=3\\
\tfrac{\gamma'}{d-2-\gamma'}&\text{ if }d\geq 4
\end{cases} \,\, ,
\end{equation}
where $\gamma' = \tfrac{\gamma}{\gamma-1}$. Here, we restricted, for simplicity, to $\gamma>\tfrac{d}{d-2}$ to have the sole restriction$ q>\frac{d}{\gamma'}$ from Theorem \ref{main2}. One can weaken the condition on $\gamma$ lowering the constraint on $\alpha$, but we avoid this technical step.

\begin{theorem} \label{mainappl}
If hypotheses \eqref{tildeD1}, \eqref{MFG1}, \eqref{MFG2}, \eqref{MFG3} hold, then there exists a classical solution $(u,\lambda,m)$ to the Neumann problem \eqref{mfg}.
\end{theorem}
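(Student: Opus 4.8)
\emph{Throughout, the hypotheses \eqref{tildeD1}, \eqref{MFG1}, \eqref{MFG2}, \eqref{MFG3} are in force.} The strategy is the classical one for stationary Mean Field Games: regularise the coupling, solve the regularised system by a Schauder fixed point, derive a priori bounds that are uniform in the regularisation parameter, and pass to the limit. The only genuinely new ingredient is that the a priori analysis will invoke the maximal $L^q$-regularity estimate of Theorem \ref{main2} (available in the present Euclidean convex setting by Remark \ref{remEucl}), and it is exactly this improved regularity that produces the enlarged range \eqref{MFG3} for the exponent $\alpha$.

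\emph{Regularised system and fixed point.} For $\varepsilon>0$, replace $V$ by a bounded, non-decreasing, $\mathcal{C}^1$ approximation $V_\varepsilon$ (for instance $V_\varepsilon\coloneqq\min\{V,1/\varepsilon\}$) which still satisfies $V_\varepsilon\geq V(0)$, agrees with $V$ on $[0,1/\varepsilon]$, and inherits the growth bounds of \eqref{MFG1} below the truncation level. Consider the map $\mathcal{T}_\varepsilon$ sending a probability density $\mu$ to the density $\tilde m$ obtained in two stages: first solve the ergodic Hamilton--Jacobi problem $-\Delta u+H(x,{\rm d}u)+\lambda=V_\varepsilon(\mu)$, $\partial_\nu u=0$, for the pair $(u,\lambda)$ with $u$ normalised by $\min_{\overline{\Omega}}u=0$ --- which has a unique solution of class $\mathcal{C}^2$ by the classical theory for ergodic Neumann problems, since $V_\varepsilon(\mu)$ is bounded; then solve the linear Fokker--Planck problem $-\Delta\tilde m-{\rm div}(\partial_pH(x,{\rm d}u)\tilde m)=0$ with $\partial_\nu\tilde m+(\partial_pH(x,{\rm d}u)\cdot\nu)\tilde m=0$ and $\int_\Omega\tilde m=1$, whose unique solution is strictly positive by the Fredholm alternative together with the strong maximum principle and the Hopf lemma, and is of class $\mathcal{C}^2$ by elliptic regularity. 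Continuity and compactness of $\mathcal{T}_\varepsilon$ on a suitable convex closed subset of $\mathcal{C}^0(\overline{\Omega},\mathbb{R})$ follow from these regularity statements, so Schauder's theorem yields a classical solution $(u_\varepsilon,\lambda_\varepsilon,m_\varepsilon)$ of the regularised system.

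\emph{Uniform a priori bounds and bootstrap.} Testing the Hamilton--Jacobi equation with $m_\varepsilon$ and the Fokker--Planck equation with $u_\varepsilon$ and subtracting --- the Neumann conditions in \eqref{mfg} being designed precisely so that all boundary integrals cancel --- and using $H-{\rm d}u\cdot\partial_pH=-\tfrac1{\gamma'}|\nabla u|^\gamma-b$, one obtains
\begin{equation*}
\tfrac{1}{\gamma'}\int_\Omega|\nabla u_\varepsilon|^\gamma m_\varepsilon\,{\rm d}x+\int_\Omega V_\varepsilon(m_\varepsilon)m_\varepsilon\,{\rm d}x=\lambda_\varepsilon-\int_\Omega b\,m_\varepsilon\,{\rm d}x\,\, .
\end{equation*}
Combining this with the relation obtained by integrating the Hamilton--Jacobi equation against the constant $\mathrm{vol}(\Omega)^{-1}$ (which controls $\lambda_\varepsilon+\tfrac{1}{\gamma\,\mathrm{vol}(\Omega)}\int_\Omega|\nabla u_\varepsilon|^\gamma$ by $\mathrm{vol}(\Omega)^{-1}\int_\Omega V_\varepsilon(m_\varepsilon)+C$), the defocusing bounds $V_\varepsilon(m)m\gtrsim m^{\alpha+1}-C$ and $V_\varepsilon(m)\lesssim m^\alpha+1$, H\"older's inequality and the normalisation $\int_\Omega m_\varepsilon=1$ yield a closed inequality for $\int_\Omega m_\varepsilon^{\alpha+1}$, hence the $\varepsilon$-uniform bounds $|\lambda_\varepsilon|\leq C$, $\|m_\varepsilon\|_{L^{\alpha+1}}\leq C$, $\|\nabla u_\varepsilon\|_{L^\gamma}\leq C$ and the weighted bound $\int_\Omega|\nabla u_\varepsilon|^\gamma m_\varepsilon\leq C$. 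One then bootstraps, alternating two effects: regularity estimates for the divergence-form Fokker--Planck equation $-\Delta m_\varepsilon={\rm div}(|\nabla u_\varepsilon|^{\gamma-2}\nabla u_\varepsilon\,m_\varepsilon)$, fed by the current integrability of $\nabla u_\varepsilon$ and $m_\varepsilon$ and by the weighted bound, improve the integrability of $m_\varepsilon$; and, once the right-hand side $f_\varepsilon\coloneqq V_\varepsilon(m_\varepsilon)+b-\lambda_\varepsilon$ of the Hamilton--Jacobi equation lies in $L^q(\Omega,\mathbb{R})$ with $q>\max\{d\tfrac{\gamma-1}{\gamma},2\}$ --- which for $\gamma>\tfrac{d}{d-2}$ reduces to $q>d/\gamma'$ --- Theorem \ref{main2} (applicable by Remark \ref{remEucl}, with $\|f_\varepsilon\|_{L^q}+\|\nabla u_\varepsilon\|_{L^1}$ uniformly bounded) gives $\||\nabla u_\varepsilon|^\gamma\|_{L^q}\leq C$, hence better integrability of $\nabla u_\varepsilon$ and of $\partial_pH(x,{\rm d}u_\varepsilon)$. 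Condition \eqref{MFG3} is calibrated exactly so that this alternation enters a strictly improving cycle and terminates with $\varepsilon$-uniform bounds $\|m_\varepsilon\|_{L^\infty}\leq C$ and $\|\nabla u_\varepsilon\|_{L^\infty}\leq C$; the sign condition $\partial_\nu b\geq0$ in \eqref{MFG2}, together with the convexity of $\Omega$, is what keeps the boundary terms in these estimates favourable. From here the Calder\'on--Zygmund inequality (Proposition \ref{czineq}) and Schauder estimates give $\varepsilon$-uniform $\mathcal{C}^{2,\theta}$-bounds on $u_\varepsilon$ and $m_\varepsilon$, while the Harnack inequality forces $m_\varepsilon\geq c_0>0$ uniformly.

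\emph{Passage to the limit and the main obstacle.} By the uniform $\mathcal{C}^{2,\theta}$-bounds and Arzel\`a--Ascoli, along a subsequence $(u_\varepsilon,\lambda_\varepsilon,m_\varepsilon)\to(u,\lambda,m)$ in $\mathcal{C}^2(\overline{\Omega},\mathbb{R})\times\mathbb{R}\times\mathcal{C}^2(\overline{\Omega},\mathbb{R})$; since $m_\varepsilon$ is uniformly bounded, $V_\varepsilon(m_\varepsilon)=V(m_\varepsilon)$ for $\varepsilon$ small, so the limit $(u,\lambda,m)$ is a classical solution of \eqref{mfg} with $\int_\Omega m=1$ and $m\geq c_0>0$ on $\overline{\Omega}$. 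I expect the real work to be the bootstrap in the third step: one must track Lebesgue exponents carefully through the coupled iteration, in particular see how a passage through the Fokker--Planck equation --- exploiting the weighted bound $\int_\Omega|\nabla u_\varepsilon|^\gamma m_\varepsilon\leq C$ and not merely $m_\varepsilon\in L^{\alpha+1}$ --- already makes $f_\varepsilon$ integrable enough to trigger Theorem \ref{main2}, and verify that \eqref{MFG3} is precisely the borderline under which the whole scheme closes; the remaining steps are routine adaptations of standard Mean Field Games arguments.
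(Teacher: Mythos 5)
Your overall architecture (regularise, Schauder fixed point, uniform a priori bounds, bootstrap via Theorem \ref{main2}, pass to the limit) matches the paper's, but the one step that actually produces the enlarged range \eqref{MFG3} is missing, and you explicitly defer it as ``the real work''. Your a priori estimates come from the \emph{first-order} duality identity (testing the Hamilton--Jacobi equation with $m_\varepsilon$ and the Fokker--Planck equation with $u_\varepsilon$), which yields only $\int_\Omega m_\varepsilon^{\alpha+1}\leq C$, hence $V(m_\varepsilon)\sim m_\varepsilon^\alpha\in L^{(\alpha+1)/\alpha}$. Feeding this into Theorem \ref{main2} requires $\tfrac{\alpha+1}{\alpha}>\tfrac{d}{\gamma'}$, i.e.\ $\alpha<\tfrac{\gamma'}{d-\gamma'}$, which is exactly the previously known range (cf.\ Remark \ref{rem:exp}) and not the range \eqref{MFG3}. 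The paper instead uses a \emph{second-order} duality identity: test the Hamilton--Jacobi equation with $\Delta m$ and the Fokker--Planck equation with $\Delta u$ and subtract. The convexity of $H$ in $p$ makes $-\int_\Omega\mathrm{Tr}\big(\partial^2_{pp}H\,(D^2u)^2\big)m\leq 0$, the convexity of $\Omega$ makes $\int_{\partial\Omega}m\,\partial_\nu|\nabla u|^\gamma\leq 0$ (Lemma \ref{segno}), and the sign condition $\partial_\nu b\geq0$ controls the term $\int_\Omega\nabla b\cdot\nabla m$ after one integration by parts. This yields $\int_\Omega\big|\nabla\big[(m\star\chi_\epsilon)^{\frac{\alpha+1}{2}}\big]\big|^2\leq C$, and the Sobolev inequality then gives $m\in L^{(\alpha+1)\frac{d}{d-2}}$, i.e.\ $V(m)\in L^{(1+\frac1\alpha)\frac{d}{d-2}}$. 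The condition $(1+\frac1\alpha)\frac{d}{d-2}>\frac{d}{\gamma'}$ is precisely $\alpha<\frac{\gamma'}{d-2-\gamma'}$, which is \eqref{MFG3}. Your hope that iterating the Fokker--Planck regularity with the weighted bound $\int_\Omega|\nabla u_\varepsilon|^\gamma m_\varepsilon\leq C$ would close the gap is not substantiated and is exactly the content you cannot skip.

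Two secondary points. First, your regularisation $V_\varepsilon=\min\{V,1/\varepsilon\}$ is ill-suited to the second-order estimate: above the truncation level $V_\varepsilon'=0$, so the coercivity lower bound $V_\varepsilon'(m)\gtrsim m^{\alpha-1}$ fails and the bound $\int_\Omega\big|\nabla\big[m^{\frac{\alpha+1}{2}}\big]\big|^2\leq C$ cannot be derived uniformly in $\varepsilon$. The paper's choice $V_\epsilon=V(m\star\chi_\epsilon)\star\chi_\epsilon$ is made so that $\int_\Omega\nabla[V(m\star\chi_\epsilon)\star\chi_\epsilon]\cdot\nabla m=\int_\Omega V'(m\star\chi_\epsilon)|\nabla(m\star\chi_\epsilon)|^2$ exactly, preserving the full coercivity of \eqref{MFG1}. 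Second, after Theorem \ref{main2} gives $\partial_pH({\rm d}u)\in L^s$ with $s>d$, the passage to $m\in L^\infty$ is not a generic ``divergence-form regularity'' step but the specific estimate of \cite[Proposition 12]{CirantJMPA}; identifying this (or proving an equivalent) is needed before $V(m)\in L^\infty$ and the Schauder bootstrap can start.
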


\begin{proof}
We claim that, under the above restrictions on $\alpha$ one can prove a priori bounds for second-order derivatives of solutions of \eqref{mfg}. Indeed, existence theorems can be obtained by implementing a regularization procedure along with a fixed-point theorem that exploits the duality structure of the system, see e.g. \cite[Theorems 4 and 7]{CirantJMPA}. The former consists in replacing $V$ with a regularizing functional
$$
V_\epsilon(x) \coloneqq V(m\star \chi_\epsilon(x)) \star \chi_\epsilon(x) \,\, , \quad x \in \overline{\Omega}
$$
where $\chi$ is a positive, symmetric mollifier with ${\rm supp}(\chi) \subset \Omega$ and $\chi_{\epsilon}(x) \coloneqq \epsilon^{-d}\chi(\epsilon^{-1}x)$. Here, we denoted by $\star$ the convolution of functions. One then proves bounds on the sequence $(u_\epsilon,\lambda_\epsilon,m_\epsilon)$ that are independent of $\epsilon$ and pass to the limit as $\epsilon \to 0^+$. Note that $m$ in the second equation satisfies $\int_\Omega m=1$ and $m>0$. We now detail how to get the conclusion via standard elliptic regularity and bootstrapping arguments once bounds for
$$
|D^2u_\epsilon| \, , \,\, |\nabla u_\epsilon|^\gamma \in L^q(\Omega,\mathbb{R}) \quad \text{ for some } \,\, q>\tfrac{d(\gamma-1)}{\gamma}
$$
are established, assuming the a priori information $V(m)\in L^q(\Omega,\mathbb{R})$. First, note that
$$
|\nabla u_\epsilon|^\gamma \in L^q(\Omega,\mathbb{R}) \, , \,\, q>\tfrac{d(\gamma-1)}{\gamma} \quad \Longrightarrow \quad
\partial_pH({\rm d}u) \in L^s(\Omega,\mathbb{R}^d) \,\, \text{ for some } s>d \,\, .
$$
Then, one can invoke \cite[Proposition 12]{CirantJMPA}, applied with $q=\frac{d}{d-1}$, to prove that $m \in L^{\infty}(\Omega,\mathbb{R})$. We can then plug this information back to the Hamilton-Jacobi equation and conclude that $V(m) \in L^{\infty}(\Omega,\mathbb{R})$, and finally use \cite[Theorem 19]{CirantJMPA} or Theorem \ref{main1} to get $u \in W^{2,r}(\Omega,\mathbb{R})$ for $r>d$. This implies, by Sobolev embeddings, that $u\in \mathcal{C}^{1,\delta}(\Omega,\mathbb{R})$ for some $\delta\in(0,1)$, so that $\nabla u$ is H\"older continuous. This allows then to regard the Hamilton-Jacobi equation as a Poisson equation with source term in a H\"older space. We then apply the Schauder estimates to conclude the claimed regularity for $u$. Moreover, the drift of the Fokker-Planck equation is now H\"older continuous, and hence one gets $m \in \mathcal{C}^{1,\sigma}(\Omega,\mathbb{R})$ for some $\sigma\in(0,1)$. The equations are now linearized and one can go beyond and get additional regularity by bootstrapping arguments (see e.g. \cite[Step 5 in Theorem 7]{CirantJMPA}).

We now briefly describe how to set up the procedure and get bounds of $V(m)\in L^q(\Omega,\mathbb{R})$ independently of $u,m$ and $\epsilon$. For the sake of notation, in the following we will denote by ${\rm d}x$ the $d$-dimensional Hausdorff measure on $\mathbb{R}^d$ and by ${\rm d}s$ the $(d{-}1)$-dimensional Hausdorff measure on $\partial \Omega$. First, arguing as in \cite[Lemma 11]{CirantJMPA} we claim that there exists a constant $C>0$ such that
\begin{equation}\label{estsob}
\int_{\Omega}|m\star \chi_\epsilon|^{\frac{d(\alpha+1)}{d-2}}\,{\rm d}x \leq C \quad \text{ for any } \epsilon>0 \,\, ,
\end{equation}
i.e. $\|m\|_{L^{\frac{d(\alpha+1)}{d-2}}(\Omega,\mathbb{R})} < \infty$. This implies that
$$
\|m^\alpha\|_{L^{(1+\frac1\alpha)\frac{d}{d-2}}(\Omega,\mathbb{R})}<\infty \,\, ,
$$
so this ensures the bound of $V(m)$ in the same Lebesgue space due to the standing hypotheses on the coupling. Moreover, $\lambda$ is bounded by the maximum principle, see e.g. \cite[Theorem 7]{CirantJMPA}. Therefore, we can apply Theorem \ref{main2}, combined with Remark \ref{Hgen} (note that $L^1$-bounds on $\nabla u$ are straightforward by integrating the equation), and conclude that $|\nabla u|^\gamma\in L^{(1+\frac1\alpha)\frac{d}{d-2}}(\Omega,\mathbb{R})$ provided that \eqref{MFG3} is satisfied (note that, when $d=3$, the condition $(1+\frac1\alpha)\frac{d}{d-2}>\frac{d}{\gamma'}$ is always realized for any $\alpha>0$). Then, $\partial_pH({\rm d}u)\in L^r(\Omega,\mathbb{R}^d)$, for some $r>d$ and one can proceed as outlined at the beginning of the proof to get the conclusion.

We now discuss \eqref{estsob}. We test the Hamilton-Jacobi equation with $\Delta m$, the Fokker-Planck equation with $\Delta u$ and use the boundary conditions to find 
\begin{multline} \label{mainid}
-\int_\Omega \mathrm{Tr}\big(\partial_{pp}^2H(\nabla u)\, (D^2u)^2\big)\,m\,{\rm d}x
+\tfrac1\gamma \int_{\partial\Omega}m\,\partial_\nu |\nabla u|^\gamma\,{\rm d}s
= \\
= \int_\Omega \nabla [V(m\star\chi_\epsilon)\star \chi_\epsilon]\cdot \nabla m \,{\rm d}x+\int_\Omega \nabla b\cdot \nabla m\,{\rm d}x \,\, .
\end{multline}
Since $\Omega$ is convex, we have $\partial_\nu |\nabla u|^\gamma\leq0$ on $\partial\Omega$, so that $\int_{\partial\Omega}m\,\partial_\nu |\nabla u|^\gamma\,{\rm d}s\leq0$ since $m>0$, while the properties of the convolutions readily imply
$$
\int_\Omega \nabla [V(m\star\chi_\epsilon)\star \chi_\epsilon]\cdot \nabla m\,{\rm d}x
=\int_\Omega \nabla [V(m\star\chi_\epsilon)]\cdot \nabla (m\star \chi_\epsilon)\,{\rm d}x
$$
Then, we integrate by parts the last term in \eqref{mainid}, use the conditions \eqref{MFG2}, $\int_\Omega m=1$ and $m>0$ to find
$$
\int_\Omega \nabla [V(m\star\chi_\epsilon)]\cdot \nabla (m\star \chi_\epsilon)\,{\rm d}x
\leq -\int_{\partial\Omega} m\,\partial_\nu b\,{\rm d}s +\int_\Omega m\,\Delta b\,{\rm d}x
\leq \| D^2b \|_{L^\infty(\Omega,{\rm Sym}_d(\mathbb{R}))} \,\, .
$$
We then apply the chain rule to $V$ and find 
$$
\int_\Omega V'(m\star \chi_\epsilon)\nabla(m\star \chi_\epsilon)\cdot \nabla(m\star \chi_\epsilon)\,{\rm d}x
\leq \| D^2b \|_{L^\infty(\Omega,{\rm Sym}_d(\mathbb{R}))} \,\, .
$$
Finally, \eqref{MFG1} leads to
$$
\int_\Omega (m\star \chi_\epsilon)^{\alpha-1}|\nabla(m\star \chi_\epsilon)|^2 \,{\rm d}x
= \int_\Omega |\nabla[(m\star\chi_\epsilon)^{\frac{\alpha+1}{2}}]|^2\,{\rm d}x
\leq C_1 \| D^2b \|_{L^\infty(\Omega,{\rm Sym}_d(\mathbb{R}))}
$$
and then by the Sobolev inequality
$$
\int_\Omega m^{(\alpha+1)\frac{d}{d-2}}\,{\rm d}x \leq C_2 \| D^2b \|_{L^\infty(\Omega,{\rm Sym}_d(\mathbb{R}))} \,\, ,
$$
which concludes the proof.
\end{proof}

\begin{rem}[Comparison with the literature]\label{rem:exp}
The results in \cite{CirantJMPA} state that one has classical regularity for \eqref{mfg}, cf \cite[Remark 8]{CirantJMPA}, when
$$
\alpha<\tfrac{1}{d-3} \quad \text{ with $d>3$ } \,\, .
$$
This range was later improved by the same author in \cite[Theorem 1.4]{CCPDE}, where it was proved the smoothness of solutions to \eqref{mfg} when 
$$
\alpha<\tfrac{\gamma'}{d-\gamma'} \,\, , \quad \gamma>\tfrac{d}{d-1} \,\, .
$$
It is immediate to see that for any $\gamma>1$ we have the inequality
$$
\max \big\{\tfrac{1}{d-3},\tfrac{\gamma'}{d-\gamma'}\big\} < \tfrac{\gamma'}{d-2-\gamma'} \,\, ,
$$
and therefore our results provide a wider range for the exponent $\alpha$ guaranteeing the smoothness of solutions to these PDE systems. We mention that \cite[Proposition 3.6]{Alpar} provides (local) classical regularity for the Neumann problem when $\gamma<\frac{d}{d-1}$ via different methods of variational nature that allows to treat only slowly increasing first-order terms, while classical regularity of solutions has been obtained in \cite[Theorem 1.4]{CCPDE} in the same range for $\gamma$ for any $\alpha<\infty$ via different PDE arguments. We emphasize that the approach used here is designed to handle first-order terms growing more that $\frac{d}{d-1}$ (indeed $\frac{d}{\gamma'}>1$ whenever $\gamma>\frac{d}{d-1}$), so our results complement \cite[Theorem 1.4]{CCPDE} and \cite[Proposition 3.6]{Alpar}.
\end{rem}

\begin{rem}[Multi-population MFGs]
The results can be extended to more general MFG structures, such as for multi-population MFG systems, as studied in \cite{CirantJMPA,Alpar}, under the same assumptions of \cite[Theorem 7]{CirantJMPA} for the coupling $V(m)$, improving the existent results in the literature. We do not detail the proof to gain classical regularity being the same of the case of one population described above.
\end{rem}

\begin{rem}
The same method through second order estimates allow to treat second order Mean Field Games systems posed on compact Riemannian manifolds without boundary or, more generally, PDE systems on the same framework of Theorem \ref{main2}. In these directions, some results for second order stationary Mean Field Games on sub-Riemannian geometries with more regular data can be found in \cite{DF}.
\end{rem}

\medskip


\begin{thebibliography}{10}

\bibitem{AdamsF}
R.~A. Adams and J.~J.~F. Fournier.
\newblock {\em Sobolev spaces}, volume 140 of {\em Pure and Applied Mathematics
  (Amsterdam)}.
\newblock Elsevier/Academic Press, Amsterdam, second edition, 2003.

\bibitem{Alexander}
S.~Alexander.
\newblock Locally convex hypersurfaces of negatively curved spaces.
\newblock {\em Proc. Amer. Math. Soc.}, 64(2):321--325, 1977.

\bibitem{BGL}
D.~Bakry, I.~Gentil, and M.~Ledoux.
\newblock {\em Analysis and geometry of {M}arkov diffusion operators}, volume
  348 of {\em Grundlehren der Mathematischen Wissenschaften [Fundamental
  Principles of Mathematical Sciences]}.
\newblock Springer, Cham, 2014.

\bibitem{BardiPerthame}
M.~Bardi and B.~Perthame.
\newblock Uniform estimates for some degenerating quasilinear elliptic
  equations and a bound on the {H}arnack constant for linear equations.
\newblock {\em Asymptotic Anal.}, 4(1):1--16, 1991.

\bibitem{BaudoinGarofalo}
F.~Baudoin and N.~Garofalo.
\newblock Curvature-dimension inequalities and {R}icci lower bounds for
  sub-{R}iemannian manifolds with transverse symmetries.
\newblock {\em J. Eur. Math. Soc. (JEMS)}, 19(1):151--219, 2017.

\bibitem{BF}
A.~Bensoussan and J.~Frehse.
\newblock Regularity theory for systems of partial differential equations with
  {N}eumann boundary conditions.
\newblock {\em Chinese Ann. Math. Ser. B}, 23(2):165--180, 2002.
\newblock Dedicated to the memory of Jacques-Louis Lions.

\bibitem{VeronJFA}
M.-F. Bidaut-V\'{e}ron, M.~Garcia-Huidobro, and L.~V\'{e}ron.
\newblock Local and global properties of solutions of quasilinear
  {H}amilton-{J}acobi equations.
\newblock {\em J. Funct. Anal.}, 267(9):3294--3331, 2014.

\bibitem{Bishop}
R.~L. Bishop.
\newblock Infinitesimal convexity implies local convexity.
\newblock {\em Indiana Univ. Math. J.}, 24:169--172, 1974/75.

\bibitem{Bogachev}
V.~I. Bogachev.
\newblock {\em Weak convergence of measures}, volume 234 of {\em Mathematical
  Surveys and Monographs}.
\newblock American Mathematical Society, Providence, RI, 2018.

\bibitem{CMjems}
A.~Cianchi and V.~Maz'ya.
\newblock Gradient regularity via rearrangements for {$p$}-{L}aplacian type
  elliptic boundary value problems.
\newblock {\em J. Eur. Math. Soc. (JEMS)}, 16(3):571--595, 2014.

\bibitem{CM}
A.~Cianchi and V.~G. Maz'ya.
\newblock Global {L}ipschitz regularity for a class of quasilinear elliptic
  equations.
\newblock {\em Comm. Partial Differential Equations}, 36(1):100--133, 2011.

\bibitem{CirantJMPA}
M.~Cirant.
\newblock Multi-population mean field games systems with {N}eumann boundary
  conditions.
\newblock {\em J. Math. Pures Appl. (9)}, 103(5):1294--1315, 2015.

\bibitem{CCPDE}
M.~Cirant.
\newblock Stationary focusing mean-field games.
\newblock {\em Comm. Partial Differential Equations}, 41(8):1324--1346, 2016.

\bibitem{CG2}
M.~Cirant and A.~Goffi.
\newblock Lipschitz regularity for viscous {H}amilton-{J}acobi equations with
  {$L^p$} terms.
\newblock {\em Ann. Inst. H. Poincar\'{e} Anal. Non Lin\'{e}aire},
  37(4):757--784, 2020.

\bibitem{CGpar}
M.~Cirant and A.~Goffi.
\newblock Maximal {$L^q$}-regularity for parabolic {H}amilton-{J}acobi
  equations and applications to mean field games.
\newblock {\em Ann. PDE}, 7(2):Paper No. 19, 40, 2021.

\bibitem{CGell}
M.~Cirant and A.~Goffi.
\newblock On the problem of maximal {$L^q$}-regularity for viscous
  {H}amilton-{J}acobi equations.
\newblock {\em Arch. Ration. Mech. Anal.}, 240(3):1521--1534, 2021.

\bibitem{CV}
M.~Cirant and G.~Verzini.
\newblock Local {H}\"older and maximal regularity of solutions of elliptic
  equations with superquadratic gradient terms.
\newblock arXiv:2203.06092, 2022.

\bibitem{ColdingMinicozzi}
T.~H. Colding and W.~P. Minicozzi, II.
\newblock Liouville properties.
\newblock {\em ICCM Not.}, 7(1):16--26, 2019.

\bibitem{DF}
F.~Dragoni and E.~Feleqi.
\newblock Ergodic mean field games with {H}\"{o}rmander diffusions.
\newblock {\em Calc. Var. Partial Differential Equations}, 57(5):Paper No. 116,
  22, 2018.

\bibitem{EellsSampson}
J.~Eells, Jr. and J.~H. Sampson.
\newblock Harmonic mappings of {R}iemannian manifolds.
\newblock {\em Amer. J. Math.}, 86:109--160, 1964.

\bibitem{GarofaloFrac}
N.~Garofalo.
\newblock Fractional thoughts.
\newblock In {\em New developments in the analysis of nonlocal operators},
  volume 723 of {\em Contemp. Math.}, pages 1--135. Amer. Math. Soc.,
  [Providence], RI, 2019.

\bibitem{GT}
D.~Gilbarg and N.~S. Trudinger.
\newblock {\em Elliptic partial differential equations of second order}.
\newblock Classics in Mathematics. Springer-Verlag, Berlin, 2001.
\newblock Reprint of the 1998 edition.

\bibitem{GMP14}
N.~Grenon, F.~Murat, and A.~Porretta.
\newblock A priori estimates and existence for elliptic equations with gradient
  dependent terms.
\newblock {\em Ann. Sc. Norm. Super. Pisa Cl. Sci. (5)}, 13(1):137--205, 2014.

\bibitem{GunPig}
B.~G\"{u}neysu and S.~Pigola.
\newblock The {C}alder\'{o}n-{Z}ygmund inequality and {S}obolev spaces on
  noncompact {R}iemannian manifolds.
\newblock {\em Adv. Math.}, 281:353--393, 2015.

\bibitem{HebeyBook}
E.~Hebey.
\newblock {\em Nonlinear analysis on manifolds: {S}obolev spaces and
  inequalities}, volume~5 of {\em Courant Lecture Notes in Mathematics}.
\newblock New York University, Courant Institute of Mathematical Sciences, New
  York; American Mathematical Society, Providence, RI, 1999.

\bibitem{KMBull}
T.~Kuusi and G.~Mingione.
\newblock Guide to nonlinear potential estimates.
\newblock {\em Bull. Math. Sci.}, 4(1):1--82, 2014.

\bibitem{ll}
J.-M. Lasry and P.-L. Lions.
\newblock Nonlinear elliptic equations with singular boundary conditions and
  stochastic control with state constraints. {I}. {T}he model problem.
\newblock {\em Math. Ann.}, 283(4):583--630, 1989.

\bibitem{LL07}
J.-M. Lasry and P.-L. Lions.
\newblock Mean field games.
\newblock {\em Jpn. J. Math.}, 2(1):229--260, 2007.

\bibitem{PorLeo}
T.~Leonori and A.~Porretta.
\newblock Large solutions and gradient bounds for quasilinear elliptic
  equations.
\newblock {\em Comm. Partial Differential Equations}, 41(6):952--998, 2016.

\bibitem{Lio80}
P.-L. Lions.
\newblock R\'{e}solution de probl\`emes elliptiques quasilin\'{e}aires.
\newblock {\em Arch. Rational Mech. Anal.}, 74(4):335--353, 1980.

\bibitem{Lions85}
P.-L. Lions.
\newblock Quelques remarques sur les probl\`emes elliptiques quasilin\'{e}aires
  du second ordre.
\newblock {\em J. Analyse Math.}, 45:234--254, 1985.

\bibitem{Alpar}
A.~R. M\'{e}sz\'{a}ros and F.~J. Silva.
\newblock On the variational formulation of some stationary second-order mean
  field games systems.
\newblock {\em SIAM J. Math. Anal.}, 50(1):1255--1277, 2018.

\bibitem{SerrinPeletier}
L.~A. Peletier and J.~Serrin.
\newblock Gradient bounds and {L}iouville theorems for quasilinear elliptic
  equations.
\newblock {\em Ann. Scuola Norm. Sup. Pisa Cl. Sci. (4)}, 5(1):65--104, 1978.

\bibitem{PigolaSurvey}
S.~Pigola.
\newblock Global {C}alder\'on-{Z}ygmund inequalities on complete {R}iemannian
  manifolds.
\newblock arXiv:2011.03120, 2020.

\bibitem{PV}
E.~A. Pimentel and V.~Voskanyan.
\newblock Regularity for second-order stationary mean-field games.
\newblock {\em Indiana Univ. Math. J.}, 66(1):1--22, 2017.

\bibitem{PorrCCM}
A.~Porretta.
\newblock On the regularity of the total variation minimizers.
\newblock {\em Commun. Contemp. Math.}, 23(1):Paper No. 1950082, 17, 2021.

\bibitem{VeronBook}
L.~V\'{e}ron.
\newblock {\em Local and global aspects of quasilinear degenerate elliptic
  equations}.
\newblock World Scientific Publishing Co. Pte. Ltd., Hackensack, NJ, 2017.
\newblock Quasilinear elliptic singular problems.

\end{thebibliography}

\end{document}